%% LyX 2.0.2 created this file.  For more info, see http://www.lyx.org/.
%% Do not edit unless you really know what you are doing.
\documentclass[oneside,english]{amsart}
\usepackage[T1]{fontenc}
\usepackage[latin9]{inputenc}
\usepackage{geometry}
\geometry{verbose}
\setlength{\parskip}{\medskipamount}
\setlength{\parindent}{0pt}
\usepackage{amsthm}
\usepackage{amstext}
\usepackage{amssymb}

\makeatletter
%%%%%%%%%%%%%%%%%%%%%%%%%%%%%% Textclass specific LaTeX commands.
\numberwithin{equation}{section}
\numberwithin{figure}{section}
\theoremstyle{plain}
\newtheorem{thm}{\protect\theoremname}
  \theoremstyle{definition}
  \newtheorem{defn}[thm]{\protect\definitionname}
  \theoremstyle{plain}
  \newtheorem{conjecture}[thm]{\protect\conjecturename}
  \theoremstyle{plain}
  \newtheorem{prop}[thm]{\protect\propositionname}
  \theoremstyle{remark}
  \newtheorem{rem}[thm]{\protect\remarkname}
  \theoremstyle{plain}
  \newtheorem{cor}[thm]{\protect\corollaryname}
  \theoremstyle{plain}
  \newtheorem{lem}[thm]{\protect\lemmaname}

\makeatother

\usepackage{babel}
  \providecommand{\conjecturename}{Conjecture}
  \providecommand{\corollaryname}{Corollary}
  \providecommand{\definitionname}{Definition}
  \providecommand{\lemmaname}{Lemma}
  \providecommand{\propositionname}{Proposition}
  \providecommand{\remarkname}{Remark}
\providecommand{\theoremname}{Theorem}

\newcommand{\mb}{\mathbb}

\newcommand{\rmap}{\rightarrow}

\newcommand{\inj}{\hookrightarrow}

\begin{document}

\title{On the Sato-Tate Conjecture for non-generic Abelian Surfaces}

\author{Christian Johansson \\ \\ with an appendix by Francesc Fit\'e}
\address{School of Mathematics, Institute for Advanced Study, Princeton, NJ 08540, USA}
\email{johansson@math.ias.edu}

\begin{abstract}
We prove many non-generic cases of the Sato-Tate conjecture for abelian
surfaces as formulated by Fit\'e, Kedlaya, Rotger and Sutherland, using
the potential automorphy theorems of Barnet-Lamb, Gee, Geraghty and
Taylor.
\end{abstract}
\maketitle

\section{Introduction}

Let $E$ be an elliptic curve over a number field $F$ without complex
multiplication (CM) and let $v$ be a finite prime of $F$ with residue
field $\mathbb{F}_{q_{v}}$ such that $E$ has good reduction at $v$.
Hasse's Theorem tells us that the number of $\mathbb{F}_{q_{v}}$-points
of $E$ is $q_{v}+1-a_{v}$, where $a_{v}$ is an integer such that
$|a_{v}|\leq2q_{v}^{1/2}$. The Sato-Tate conjecture concerns the
distribution of the numbers $a_{v}/2q_{v}^{1/2}$ in $[-1,1]$ as
$v$ varies through the primes of good reduction for $E$. More precisely,
it tells us that the numbers $\theta_{v}=\arccos\,(a_{v}/2q_{v}^{1/2})$
are distributed according to the measure $\tfrac{2}{\pi}sin^{2}\theta\, d\theta$
on $[0,\pi]$. This conjecture was recently settled in \cite{HSBT}
and \cite{BGHT} (see also \cite{BLGG}) when $F$ is totally real
as a result of progress on potential automorphy theorems for compatible
systems of Galois representation of arbitrary dimension. When $E$
has CM over $F$, the $\theta_{v}$ are uniformly distributed in $[0,\pi]$;
when $E$ has CM but not over $F$ the $\theta_{v}$ are equidistributed on $[0,\pi]$ according to the measure which
is half of the uniform probability measure over $[0,\pi]$ plus a Dirac measure
of mass $1/2$ at $\pi/2$.

All known proofs follow a pattern similar to that of the classical proof of the
Prime Number Theorem and its generalizations, where holomorphy and
non-vanishing of certain (complex) $L$-functions are established
in the region $Re\, s\geq1$. In the CM cases, these $L$-functions
are understood in terms of Hecke $L$-functions. In the non-CM case,
Tate realized that the same proof would work if one could establish
the required properties for the symmetric power $L$-functions of
the elliptic curve. These observations were generalised and axiomatized by Serre (\cite[Appendix
to \S 1]{Ser}). It was then observed by Langlands that the
required analytic properties follow from the automorphy of associated
$L$-functions, and this opened up possibilities both in terms of proofs
and further generalizations. During these developments, the Sato-Tate
conjecture was also reinterpreted as a statement about the equidistribution
of a certain sequence of elements (the images of Frobenii, suitably
normalised) inside a certain compact Lie group (that turns out to
be ${\rm SU}(2)$ for the original Sato-Tate conjecture). If one believes
that all motivic $L$-functions are automorphic, then the Sato-Tate conjecture becomes
a statement about the distribution of the Satake parameters of unitary
automorphic representations. 

In \cite{FKRS}, Fit\'e, Kedlaya, Rotger and Sutherland described the
generalization of the Sato-Tate conjecture to abelian surfaces. Before
we describe their results, let us take a step back and sketch how
a fully functional Langlands philosophy would apply to the situation
of abelian surfaces (the same type of argument should apply to any "pure motive"). Let $A/F$ be an abelian surface. Then the Langlands
correspondence predicts the existence of an $L$-algebraic automorphic
representation $\pi_{A}$ attached to the compatible system $(H_{et}^{1}(A,\overline{\mathbb{Q}}_{\ell}))_{\ell}$
and associated with $\pi_{A}$ is a representation $\rho_{A}\,:\, L_{F}\rightarrow{\rm GL}_{4}(\mathbb{C})$
of the conjectural Langlands group $L_{F}$. We define the Sato-Tate
group of $A$ to be the image $G=(\rho_{A}\otimes|\cdot|^{1/2})(L_{F})$
where $|\cdot|$ denotes the norm character of $L_{F}$. The representation
$\rho_{A}\otimes|\cdot|^{1/2}$ is unitary and the group $G$ should
be a compact group. The Sato-Tate conjecture would then say that the
sequence of conjugacy classes $((\rho_{A}\otimes|\cdot|^{1/2})(Frob_{v}))_{v\notin S}$
(where $S$ is the set of finite primes where $A$ has bad reduction)
is equidistributed with respect to the measure on the set of conjugacy
classes of $G$ induced from the Haar probability measure on $G$. Following the strategy
of Serre, this follows from the holomorphy and non-vanishing in the
region $Re\, s\geq1$ of the partial $L$-functions $L^{S}(s,r\circ(\rho_{A}\otimes|\cdot|^{1/2}))$
for all nontrivial irreducible representations $r$ of $G$. The global
Langlands correspondences would then imply that each $L^{S}(s,r\circ(\rho_{A}\otimes|\cdot|^{1/2}))$
is the partial $L$-function of a unitary cuspidal automorphic representation
of ${\rm GL}_{\dim\, r}(\mathbb{A}_{F})$ (which is not of form $|\cdot|^{i\lambda}$,
$\lambda\in\mathbb{R}$) and this gives the holomorphy and non-vanishing.
All this is of course highly conjectural and the existence of $L_{F}$ is particularly problematic. Instead, in order to get
something well-defined, one has to use the compatible system $(H_{et}^{1}(A,\overline{\mathbb{Q}}_{\ell}))_{\ell}$
directly; this a construction due to Serre (\cite[ \S 8]{Ser2}). This gives a substitute (denoted $ST_{A}$) for $G$ as well as the conjugacy classes
$((\rho_{A}\otimes|\cdot|^{1/2})(Frob_{v}))_{v\notin S}$, and Serre formulates the Sato-Tate conjecture in this generality. For abelian surfaces, Fit\'e, Kedlaya, Rotger and Sutherland (in \cite{FKRS}) give a precise conjecture of what $ST_{A}$ should be; they obtain 52 possibilities, each given by an explicit recipe in terms of the endomorphisms of $A$.

In this paper we wish to investigate what can be said about the Sato-Tate
conjecture for abelian surfaces using current potential automorphy
theorems. Roughly speaking there are three cases. First, there is
the so-called generic case when ${\rm End}_{\overline{\mathbb{Q}}}(A)=\mathbb{Z}$.
In this case we are not able to prove anything because of current
restrictions on potential automorphy theorems to Galois representations
that are regular. Second, we have the cases when $H_{et}^{1}(A,\overline{\mathbb{Q}}_{\ell})$
is potentially abelian where one may prove the conjecture completely,
using class field theory. The remaining cases may be loosely described as those are those that are potentially
of ${\rm GL}_{2}$-type (see Definition \ref{gl2def} for our slightly non-standard definition of ${\rm GL}_{2}$-type) but not potentially abelian; here we may prove
the conjecture under the restriction that a certain (at most quadratic) extension of $F$
is totally real. Unsurprisingly, the situation and required assumptions
mirror that of the elliptic curve case. The proofs are somewhat more
delicate due to the more complicated structure of $ST_{A}$ but otherwise follow
the general strategy as in \cite{HSBT}. The results we prove may
be summarized loosely as follows:
\begin{thm}
\label{thm: main thm}Let $A/F$ be an abelian surface.

1) (Propositions \ref{prop: B[C_1]}, \ref{prop: B[C_2]}, \ref{prop: E[C_n]}, \ref{prop: E[R]} and \ref{prop: E[D_n]}) If $A$ is
of type $\textbf{B}$, $\textbf{C}$ or $\textbf{E}$ and a certain
at most quadratic extension of $F$ is totally real, then the Sato-Tate conjecture
holds for $A$.

2) (Proposition \ref{cor: D or F}) If $A$ is of type $\textbf{D}$
or $\textbf{F}$, then the Sato-Tate conjecture holds for $A$.
\end{thm}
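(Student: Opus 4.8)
The plan is to reduce the equidistribution statement, type by type, to the analytic criterion of Serre (the Appendix to \S 1 of \cite{Ser}): the conjugacy classes $((\rho_A\otimes|\cdot|^{1/2})(Frob_v))_{v\notin S}$ are equidistributed for the image of the Haar probability measure on $ST_A$ if and only if, for every nontrivial irreducible representation $r$ of $ST_A$, the partial $L$-function $L^S(s,r\circ(\rho_A\otimes|\cdot|^{1/2}))$ admits a holomorphic and non-vanishing continuation to the closed half-plane ${\rm Re}\,s\geq 1$. Thus the proof splits into a uniform mechanism and a case analysis over the list of Sato--Tate groups attached to types $\textbf{B}$, $\textbf{C}$, $\textbf{D}$, $\textbf{E}$, $\textbf{F}$ produced in \cite{FKRS} (and recorded in the propositions cited in the statement). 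For the uniform mechanism one fixes a finite Galois extension $L/F$ over which the compatible system $(H^1_{et}(A,\overline{\mathbb{Q}}_\ell))_\ell$ becomes as simple as possible --- a sum of abelian systems for types $\textbf{D}$, $\textbf{F}$, and something built by tensor operations from a $2$-dimensional system for types $\textbf{B}$, $\textbf{C}$, $\textbf{E}$ --- and which also trivializes the component group of $ST_A$. Using Brauer's induction theorem on ${\rm Gal}(L/F)$ together with the tensor- and symmetric-power structure of $H^1_{et}(A)$, one rewrites each $r\circ(\rho_A\otimes|\cdot|^{1/2})$ as a virtual sum of representations induced from subfields of $L$ over which $\rho_A$ is understood; then $L^S(s,r\circ(\rho_A\otimes|\cdot|^{1/2}))$ becomes a product and quotient of automorphic $L$-functions over those subfields, and one reads off the required analytic properties. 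This descent proceeds exactly as in \cite{HSBT}.

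For types $\textbf{D}$ and $\textbf{F}$ (part 2), $A$ is potentially abelian: over a suitable $L$ the system $(H^1_{et}(A_L,\overline{\mathbb{Q}}_\ell))_\ell$ is a sum of $\ell$-adic characters attached to algebraic Hecke characters of $L$. Running the mechanism above, every $L^S(s,r\circ(\rho_A\otimes|\cdot|^{1/2}))$ with $r$ nontrivial becomes a product and quotient of Hecke $L$-functions $L(s,\chi)$ over $L$ and its subfields, with no surviving $\chi$ equal to a finite-order character times $|\cdot|^{it}$. Hecke's theorem then supplies the meromorphic continuation, the holomorphy, and the non-vanishing on ${\rm Re}\,s\geq 1$. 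Since automorphy on ${\rm GL}_1$ is class field theory, no condition on the archimedean places of $F$ intervenes, which is why part 2 is unconditional.

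For types $\textbf{B}$, $\textbf{C}$ and $\textbf{E}$ (part 1), $A$ is potentially of ${\rm GL}_2$-type but not potentially abelian: there is an at most quadratic extension $F'/F$ and a $2$-dimensional compatible system over $F'$ --- coming from an elliptic curve, a Hilbert modular form, or a restriction of scalars of such --- whose symmetric powers and tensor products, together with induction from $F'$ to $F$, reconstruct $(H^1_{et}(A,\overline{\mathbb{Q}}_\ell))_\ell$. Unwinding the irreducible $r$ of $ST_A$ as above, each $L^S(s,r\circ(\rho_A\otimes|\cdot|^{1/2}))$ becomes a product of Rankin--Selberg $L$-functions of symmetric powers of this $2$-dimensional system, together with Hecke $L$-functions in the sub-cases with potential complex multiplication. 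Their holomorphy and non-vanishing on ${\rm Re}\,s\geq 1$ follow from the potential automorphy of all symmetric powers of $2$-dimensional regular compatible systems over totally real fields \cite{BGHT}, \cite{BLGG}, combined with the Gelbart--Jacquet symmetric square lift, Ramakrishnan's ${\rm GL}_2\times{\rm GL}_2$ functoriality, automorphic induction, and the Jacquet--Shalika non-vanishing theorem. The potential automorphy input is available only over totally real (or CM) base fields, and $F'$ is precisely the field over which the ${\rm GL}_2$-structure lives; this is the source of the hypothesis that the relevant at most quadratic extension of $F$ be totally real.

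The deepest analytic ingredient --- potential automorphy of symmetric powers --- is imported as a black box, so the real difficulty, and the main obstacle, is combinatorial: for each of the many Sato--Tate groups occurring one must pin down the correct extensions $L/F$ and $F'/F$, decompose every nontrivial irreducible $r$ of $ST_A$ along them, and verify the matching with automorphic $L$-functions. In particular one must check that the trivial representation never occurs in $r\circ(\rho_A\otimes|\cdot|^{1/2})$ for nontrivial $r$ in a way that produces an unwanted pole, and that equidistribution descends correctly from $L$ back to $F$ (using that ${\rm Gal}(\overline{F}/F)$ surjects onto the component group of $ST_A$ and that the $L$-functions over $F$ factor through those over $L$). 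A further point requiring care, in order to make part 1 sharp, is to isolate within types $\textbf{B}$, $\textbf{C}$, $\textbf{E}$ the potentially-CM sub-cases --- which, like types $\textbf{D}$, $\textbf{F}$, require no hypothesis --- from those that genuinely need $F'$ to be totally real.
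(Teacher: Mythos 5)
Your proposal takes essentially the same approach as the paper: Serre's criterion reduces equidistribution to invertibility of the relevant partial $L$-functions on ${\rm Re}\,s\geq 1$; for types $\textbf{D}$ and $\textbf{F}$ these become products and quotients of Hecke $L$-functions via Brauer's theorem and class field theory (the paper packages this as a general statement about unitary algebraic Weil parameters); for types $\textbf{B}$, $\textbf{C}$, $\textbf{E}$ one extends irreducible representations of $ST_A$ to algebraic representations of the $\ell$-adic monodromy group, obtaining regular, essentially self-dual, irreducible compatible systems to which the potential automorphy theorem (the paper uses Theorem~5.4.1 of \cite{BLGGT} rather than \cite{BGHT}, \cite{BLGG}, and gets by with Rankin--Selberg theory rather than Ramakrishnan's ${\rm GL}_2\times{\rm GL}_2$ lift) applies, followed by Brauer and solvable base change. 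One small correction: there are no ``potentially-CM sub-cases'' within types $\textbf{B}$, $\textbf{C}$, $\textbf{E}$ that escape the totally-real hypothesis --- those are precisely types $\textbf{D}$ and $\textbf{F}$, and every $\textbf{B}$, $\textbf{C}$, $\textbf{E}$ surface has a non-CM two-dimensional piece to which the potential automorphy input must be applied, so the hypothesis is always genuinely in play in part 1.
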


Here type $\textbf{B}$, $\textbf{C}$, $\textbf{D}$, $\textbf{E}$
resp. $\textbf{F}$ refers to the (absolute) type as defined in \cite[\S 4]{FKRS},
we recall them in \S \ref{sec: description}. For now, it suffices
to say that $\textbf{D}$ and $\textbf{F}$ are the potentially abelian
cases, and that the excluded type $\textbf{A}$ is the generic case.
Special cases of Theorem \ref{thm: main thm} that have previously
been recorded in the literature are the case when $A/F$ is isogenous
to a product of non-CM elliptic curves that do not become isogenous
over any finite extension and $F$ is totally real (\cite{Har}), and
a number of cases of type $\textbf{F}$ over $\mathbb{Q}$ (\cite{FS}).

Let us outline the contents of the paper. Section \ref{sec: description}
states the conjecture, explains the strategy and sets up notation
and terminology to be used throughout the paper. The reader is advised to
read this before proceeding, or look back at it later if he encounters unfamiliar
notation. In section \ref{sec: abelian} we do the potentially abelian
cases. Indeed, we give a general equidistribution result for continuous
unitary representations $W_{F}\rightarrow{\rm GL}_{n}(\mathbb{C})$,
roughly following the ``ideal'' strategy above. This is certainly well known but we could not find a precise reference, so we have
included some details. For abelian varieties $A$ whose Tate module is potentially abelian this gives a "Sato-Tate conjecture" for $A$, but it is not clear that the group obtained in this fashion agrees with the one defined by Serre. We also prove the Sato-Tate conjecture in the form of Serre.
In section \ref{sec: GL(2)-type} we do the cases potentially
of ${\rm GL}_{2}$-type, using the powerful potential automorphy results
of \cite{BLGGT}. We believe,
and hope that the reader agrees, that the relatively clean proofs
of \S \ref{sec: GL(2)-type} are a good illustration of the power
and beauty of the results of \cite{BLGGT}.
The paper concludes with an appendix, written by Francesc Fit\'e, giving examples of abelian surfaces satisfying the assumptions of Propositions \ref{prop: B[C_2]} and \ref{prop: E[D_n]}, showing that they are non-empty.

The author wishes to thank Toby Gee for useful remarks on an earlier draft of this paper, Kiran Kedlaya and Jean-Pierre Serre for highlighting the issue of the two different definitions of a Sato-Tate group in the potentially abelian case, Andrew Wiles for a useful conversation, and the anonymous referee for useful comments and corrections. Moreover thanks are also due to Kedlaya, Benjamin Smith and Francesc Fit\'e for discussions relating to the appendix. This research was supported by EPSRC Grant EP/J009458/1.

\section{\label{sec: description}The Conjecture and the Strategy}

We will fix once and for all isomorphism $\iota\,:\,\overline{\mathbb{Q}}_{\ell}\cong\mathbb{C}$ for all $\ell$.
(For simplicity we will omit $\ell$ from the notation; we believe
this should not cause any confusion). We will let $\overline{\mathbb{Q}}$ denote the algebraic closure of $\mathbb{Q}$ in $\mathbb{C}$. If $K$ is any number field then, using $\iota$, we may identify embeddings $K\inj \overline{\mb{Q}}_{\ell}$ and $K\inj \overline{\mb{Q}}$.

\subsection{\label{sec: ST-group}The Sato-Tate group and the definition of conjugacy classes}

In this subsection we recall \cite[\S 2.1]{FKRS}. Let $A$ be
an abelian variety of dimension $g$ over a number field $F$, let $\phi$ be a polarization
of $A$ and fix an embedding $F\hookrightarrow\overline{\mathbb{Q}}$.
Fix a symplectic basis for $H_{1}(A(\mathbb{C})^{an},\mathbb{Q})$
and use it to equip that space with an action of ${\rm GSp}_{2g}(\mathbb{Q})$.
Fix a prime $\ell$ and let $V_{\ell}(A)$ be the rational $\ell$-adic
Tate module of $A$. Make the identifications 
\[
V_{\ell}(A)\cong H_{1,et}(A_{\overline{\mathbb{Q}}_{\ell}},\mathbb{Q}_{\ell})\cong H_{1,et}(A_{\mathbb{C}},\mathbb{Q}_{\ell})\cong H_{1}(A(\mathbb{C})^{an},\mathbb{Q}_{\ell})\cong H_{1}(A(\mathbb{C})^{an},\mathbb{Q})\otimes_{\mathbb{Q}}\mathbb{Q}_{\ell}
\]
The Weil pairing becomes identified with the cup product pairings
in etale and singular cohomology and our symplectic basis for $H_{1}(A(\mathbb{C})^{an},\mathbb{Q})$
gives a symplectic basis for $V_{\ell}(A)$, and the action of $G_{F}=Gal(\overline{\mathbb{Q}}/F)$
defines a continuous homomorphism 
\[
\rho_{A,\ell}\,:\, G_{F}\rightarrow{\rm GSp}_{2g}(\mathbb{Q}_{\ell}).
\]
We let $G_{\ell}=G_{\ell}(A)$ denote the image of $\rho_{A,\ell}$,
and $G_{\ell}^{Zar}=G_{\ell}^{Zar}(A)$ denotes the Zariski closure
of $G_{\ell}$ inside ${\rm GSp}_{2g}(\mathbb{Q}_{\ell})$ (the $\ell$-adic
monodromy group of $A$). $G_{\ell}$ is open in $G_{\ell}^{Zar}$
in the $\ell$-adic topology. Going further we let $G_{F}^{1}$ denote
the kernel of the cyclotomic character $\chi_{\ell}\,:\, G_{F}\rightarrow\mathbb{Z}_{\ell}^{\times}$
and set $G_{\ell}^{1}=G_{\ell}^{1}(A)$ to be the image of $G_{F}^{1}$
under $\rho_{A,\ell}$; we denote by $G_{\ell}^{1,Zar}=G_{\ell}^{1,Zar}(A)$
the Zariski closure of $G_{\ell}^{1,Zar}$ in ${\rm GSp}_{2g}(\mathbb{Q}_{\ell})$.
$G_{\ell}^{1,Zar}$ coincides with the kernel of the similitude character
on $G_{\ell}^{Zar}$. If $F^{\prime}/F$ is a finite extension, then
$G_{\ell}^{1,Zar}(A_{F^{\prime}})$ is a finite index subgroup of
$G_{\ell}^{1,Zar}(A)$, hence these groups have the same identity
components. Moreover, for sufficiently large $F^{\prime}$, $G_{\ell}^{1,Zar}(A_{F^{\prime}})$
is connected.

Using $\iota$ we may embed ${\rm GSp}_{2g}(\mathbb{Q}_{\ell})$ into
${\rm GSp}_{2g}(\mathbb{C})$. Put $G^{1}=G_{\ell}^{1,Zar}\otimes_{\mathbb{Q}_{\ell},\iota}\mathbb{C}$
and $G=G_{\ell}^{Zar}\otimes_{\mathbb{Q}_{\ell},\iota}\mathbb{C}$.
Let $v$ be a finite prime of $F$ with residue field $\mathbb{F}_{q_{v}}$(we
will use this notation throughout the paper). We may identify $G/G^{1}$
with $\mathbb{C}^{\times}$ compatibly with the cyclotomic character,
and the image of $g_{v}=\rho_{A,\ell}(Frob_{v})\in G$ in $\mathbb{C}^{\times}$
is $q_{v}$ (here and throughout the rest of this paper $Frob_{v}$
denotes the arithmetic Frobenius). An argument due to Deligne shows that $G_{\ell}^{Zar}$ contains the central $\mathbb{G}_{m}$ of ${\rm GSp}_{2g}$. Hence $g_{v}^{\prime}=q_{v}^{-1/2}g_{v}\in G^{1}$. 
\begin{defn}
1) The Sato-Tate group $ST_{A}$ of $A$ (for the prime $\ell$ and isomorphism
$\iota$) is a maximal compact Lie subgroup of $G^{1}$ contained
in ${\rm USp}(2g)$.

2) The semisimple component of $g_{v}^{\prime}$ is an element of
$G^{1}$ with eigenvalues of norm $1$ and hence belongs to some conjugate
of $ST_{A}$. Thus we may define $s(v)$ to be the associated conjugacy
class in $ST_{A}$.\end{defn}

\begin{rem}
We have phrased the definitions in the general form of  \cite[\S 8]{Ser2}. In the case of abelian varieties, semisimplicity is well known and due to Tate. Moreover, $G_{\ell}^{Zar}$, and hence $G_{\ell}^{1,Zar}$ and $ST_{A}$, is reductive by Faltings's theorem.
\end{rem}

We remark that when $g\leq 3$ or $A$ is isogenous to a product of CM abelian varieties, the $G_{\ell}^{1,Zar}$ is known to have a common model over $\mathbb{Q}$ (\cite[Theorem 2.16]{FKRS}, \cite[Theorem 6.6]{BK}), so $ST_{A}$ is a compact real form of $G^{1}$ and independent of $\ell$. Note however that it is not in general known in these cases that the conjugacy classes $s_{v}$ are independent of $\ell$. We may now state the generalised Sato-Tate conjecture:  

\begin{conjecture}
(\cite[\S 8]{Ser2}, \cite[Conjecture 1.1]{FKRS}) The classes $s(v)$ are equidistributed
in the set $Conj(ST_{A})$ of conjugacy classes of $ST_{A}$ with
respect to the pullback of the Haar probability measure on $ST_{A}$
to $Conj(ST_{A})$.
\end{conjecture}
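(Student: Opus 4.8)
Since the conjecture is certainly out of reach in full generality with current technology, the plan is to prove it in the non-generic cases, following the ``ideal'' strategy of Serre sketched in the introduction. By the Peter--Weyl theorem together with Weyl's equidistribution criterion, the assertion that the classes $s(v)$ are equidistributed in $\mathrm{Conj}(ST_A)$ is equivalent to the statement that, for every nontrivial irreducible representation $r$ of $ST_A$, the partial $L$-function $L^S(s)=\prod_{v\notin S}\det\!\big(1-r(s(v))\,q_v^{-s}\big)^{-1}$ extends to a holomorphic and non-vanishing function on $\mathrm{Re}\,s\geq 1$. Since $s(v)$ is the semisimple conjugacy class of $q_v^{-1/2}\rho_{A,\ell}(Frob_v)$ and, for $g=2$, $ST_A$ is a compact real form of $G^1$ by the cited results of \cite{FKRS} and \cite{BK}, an irreducible $r$ extends to an algebraic representation of $G_\ell^{1,Zar}$ and hence --- after a suitable half-integral Tate twist --- to a compatible system of $\ell$-adic representations of $G_F$. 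Thus $L^S(s)$ coincides, up to the finitely many Euler factors at $S$, with the $L$-function of a motivic Galois representation, and it suffices to exhibit each such $L$-function as that of an automorphic representation of $\mathrm{GL}_n/F$ which is either trivial or an isobaric sum of cuspidal representations, none of which is of the form $|\cdot|^{i\lambda}$; holomorphy and non-vanishing on $\mathrm{Re}\,s\geq 1$ then follow from the standard analytic properties of such $L$-functions.

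The next step is to split into cases according to the structure of the Tate module dictated by the classification of $ST_A$ in \cite{FKRS}, recalled in \S\ref{sec: description}. In the potentially abelian cases, types $\textbf{D}$ and $\textbf{F}$, the representation $\rho_{A,\ell}$ becomes abelian over a finite extension $F'/F$, hence a sum of algebraic Hecke characters; composing with any $r$ and untwisting, $r\circ(\rho_A\otimes|\cdot|^{1/2})$ becomes over $F'$ a sum of finite-order twists of unitary Hecke characters, and an application of Brauer's induction theorem brings the computation of $L^S(s)$ back down to $F$, where it is a product of Hecke $L$-functions and Artin $L$-functions of representations of $\mathrm{Gal}(F'/F)$. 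All of these are holomorphic and non-vanishing on $\mathrm{Re}\,s\geq 1$ by class field theory and Brauer's theorem, which is why no totally-real hypothesis is needed here; this will give part (2) of Theorem \ref{thm: main thm}. In the cases potentially of $\mathrm{GL}_2$-type but not potentially abelian, types $\textbf{B}$, $\textbf{C}$ and $\textbf{E}$, over a suitable finite extension $F'$ the Tate module of $A_{F'}$ decomposes, via the extra endomorphisms, into two-dimensional pieces attached to abelian varieties of $\mathrm{GL}_2$-type; imposing that the relevant at-most-quadratic subextension be totally real makes these two-dimensional $\ell$-adic representations regular, so the potential automorphy theorems of \cite{BLGGT} show that they, together with all their symmetric powers and mutual tensor products, are potentially automorphic. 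Combining this with the known instances of functoriality for $\mathrm{GL}_2$ and, once more, Brauer induction, one obtains automorphy of $L^S(s,r\circ(\rho_A\otimes|\cdot|^{1/2}))$ for every $r$; cuspidality (or isobaric-sum-of-cuspidals) of the resulting representation holds because the pieces are genuinely $\mathrm{GL}_{\geq 2}$ whenever $r$ is nontrivial, and this yields part (1). The generic type $\textbf{A}$ is excluded precisely because there one of the constituents to be handled is the $4$-dimensional representation $\rho_{A,\ell}$ itself, which is not regular, so no current potential automorphy theorem applies.

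I expect the main obstacle to be the bookkeeping forced by the fact that $ST_A$ is typically disconnected, with a nontrivial and sometimes intricate component group $ST_A/ST_A^0$. For each of the relevant groups on the \cite{FKRS} list one must: write down its irreducible representations $r$; decompose $r\circ(\rho_A\otimes|\cdot|^{1/2})$, over the splitting field $F'$, into the building blocks (Hecke characters, or the two-dimensional automorphic pieces and their functorial transfers); descend back to $F$ by Brauer induction while correctly accounting for the finite-order Artin representations of $\mathrm{Gal}(F'/F)$ that appear; and finally verify that the automorphic representation one lands on over $F$ really is trivial or cuspidal and ``non-abelian enough'' so that non-vanishing at $\mathrm{Re}\,s=1$ survives. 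Keeping control of exactly which small extension needs to be totally real --- namely the one governing regularity of the two-dimensional blocks, and no larger field --- is where the argument is more delicate than in the elliptic curve case; everything else follows the template of \cite{HSBT} and the formal properties of Rankin--Selberg and symmetric power $L$-functions.
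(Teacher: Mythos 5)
Your outline matches the paper's strategy closely: Serre's criterion plus Brauer induction, with the potentially abelian types handled via class field theory and the potentially-$\mathrm{GL}_2$ types via the BLGGT potential automorphy theorems applied to the two-dimensional pieces of the Tate module, including the observation that an irreducible representation of $ST_A$ extends to an algebraic representation of $G_\ell^{Zar}$ and hence yields a compatible system. Two small inaccuracies worth flagging: (i) the two-dimensional $\lambda$-adic pieces $\rho_{A,\lambda}$ are \emph{always} regular of Hodge--Tate weights $0,-1$ --- the totally-real hypothesis on the small extension is needed so that the BLGGT potential automorphy theorem applies at all, not to secure regularity; and (ii) not every nontrivial irreducible $r$ of $ST_A$ leads to a $\ge 2$-dimensional block --- the one-dimensional nontrivial characters of the component group give rise to nontrivial finite-order Artin/Hecke characters, and these must be (and in the paper are) handled directly by class field theory rather than by any cuspidality argument.
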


\subsection{\label{sec: strategy} The strategy of proof (following Tate and Serre)}

Let $S$ be a finite set of places of $F$ containing the infinite
places such that $\rho_{A,\ell}$ is unramified outside $S$. Let
$r$ be an irreducible representation of $ST_{A}$. We may form the
formal product 
\[
L^{S}(s,r,A)=\prod_{v\notin S}\frac{1}{{\rm det}(1-r(s(v))q_{v}^{-s})}
\]
where by abuse of notation we let $s(v)$ denote some element of the
conjugacy class $s(v)$ (any element will do; ${\rm det}(1-r(s(v))q_{v}^{-s})$
is independent of the choice). Then Serre, elaborating on the special
case $ST_{A}={\rm SU}(2)$ which was studied by Tate, proves the following:
\begin{thm}
(Serre, \cite[Appendix to \S 1]{Ser}) Assume that, for any irreducible
representation $r$, $L^{S}(s,r,A)$ converges absolutely on $Re(s)>1$
and extends to a meromorphic function on $Re(s)\geq1$ having no zeroes
or poles except possibly at $s=1$. Then $(s(v))_{v\notin S}$ are
equidistributed in $Conj(ST_{A})$ if and only if $L^{S}(s,r)$ is holomorphic
and non-vanishing at $s=1$ for all irreducible $r\neq1$.
\end{thm}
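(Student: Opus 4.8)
The plan is to run the standard argument combining Weyl's equidistribution criterion with the analytic properties of the $L^{S}(s,r,A)$ through a Tauberian theorem; the input is precisely the assumed meromorphic continuation, and essentially all the work is in the Tauberian step.

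First I would reduce equidistribution to character-sum estimates. By the Peter--Weyl theorem the $\mathbb{C}$-linear span of the irreducible characters $\chi_{r}$ of $ST_{A}$ is uniformly dense in the space of continuous class functions, and $\int_{ST_{A}}\chi_{r}=\delta_{r,1}$ for the Haar probability measure; hence the classes $s(v)$ are equidistributed in $Conj(ST_{A})$ if and only if, for every non-trivial irreducible $r$, $\sum_{v\notin S,\,q_{v}\le x}\chi_{r}(s(v))=o(\pi_{F}(x))$ as $x\to\infty$, where $\pi_{F}(x)=\#\{v\notin S:q_{v}\le x\}\sim x/\log x$ by the prime ideal theorem for $F$. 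By partial summation this is equivalent to $\sum_{v\notin S,\,q_{v}\le x}\chi_{r}(s(v))\log q_{v}=o(x)$.

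Next I would relate these sums to $L^{S}(s,r,A)$. Writing $\det(1-r(s(v))q_{v}^{-s})^{-1}=\exp\!\big(\sum_{k\ge1}k^{-1}\mathrm{tr}(r(s(v))^{k})q_{v}^{-ks}\big)$ and using $|\mathrm{tr}(r(g)^{k})|\le\dim r$, the terms with $k\ge2$ sum to a function holomorphic on $\mathrm{Re}(s)>1/2$; thus on $\mathrm{Re}(s)>1$ one has $\log L^{S}(s,r,A)=\sum_{v\notin S}\chi_{r}(s(v))q_{v}^{-s}+h_{r}(s)$ and $-\tfrac{d}{ds}\log L^{S}(s,r,A)=\sum_{v\notin S}\chi_{r}(s(v))(\log q_{v})q_{v}^{-s}+\tilde h_{r}(s)$ with $h_{r},\tilde h_{r}$ holomorphic on $\mathrm{Re}(s)>1/2$. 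For the direction \emph{equidistributed $\Rightarrow$ holomorphic and non-vanishing at $s=1$}: if $L^{S}(s,r,A)$ vanished or had a pole of order $m\ne0$ at $s=1$, then $\sum_{v\notin S}\chi_{r}(s(v))q_{v}^{-\sigma}\sim m\log\frac{1}{\sigma-1}$ as $\sigma\to1^{+}$, whereas the estimate $\sum_{q_{v}\le x}\chi_{r}(s(v))=o(\pi_{F}(x))$ forces (a routine partial-summation bound, splitting at $x=e^{1/(\sigma-1)}$) that same Dirichlet series to be $o\!\big(\log\frac{1}{\sigma-1}\big)$, a contradiction. For the converse: the meromorphy hypothesis together with holomorphy and non-vanishing at $s=1$ (and the absence of other zeros or poles on $\mathrm{Re}(s)\ge1$) makes $L^{S}(s,r,A)$ holomorphic and non-vanishing on an open neighbourhood of $\{\mathrm{Re}(s)\ge1\}$, so $-\tfrac{d}{ds}\log L^{S}(s,r,A)$, and hence $\sum_{v\notin S}\chi_{r}(s(v))(\log q_{v})q_{v}^{-s}$, extends holomorphically there; since $\sum_{q_{v}\le x}|\chi_{r}(s(v))|\log q_{v}\le(\dim r)\,\theta_{F}(x)=O(x)$, a Tauberian theorem then yields $\sum_{q_{v}\le x}\chi_{r}(s(v))\log q_{v}=o(x)$, and partial summation gives $\sum_{q_{v}\le x}\chi_{r}(s(v))=o(\pi_{F}(x))$.

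The crux — and the step I expect to be most delicate to pin down — is the Tauberian theorem used in the converse direction: one needs a version valid for Dirichlet series with \emph{complex} coefficients (coefficient sums dominated by $O(x)$) whose only hypothesis is holomorphic continuation to an open neighbourhood of $\{\mathrm{Re}(s)\ge1\}$. The classical Wiener--Ikehara theorem, which requires non-negative coefficients, does not suffice on its own: the natural non-negative auxiliaries are the $L^{S}(s,\sigma\otimes\overline{\sigma})$ for representations $\sigma$ of $ST_{A}$ — these have non-negative Dirichlet coefficients since $\mathrm{tr}\big((\sigma\otimes\overline{\sigma})(g)^{k}\big)=|\mathrm{tr}(\sigma(g)^{k})|^{2}\ge0$, and, granting the holomorphy/non-vanishing hypothesis, the order of pole of $L^{S}(s,\sigma\otimes\overline\sigma)$ at $s=1$ is the multiplicity of the trivial representation in $\sigma\otimes\overline{\sigma}$ — but applying Wiener--Ikehara to the logarithmic derivatives of these, e.g.\ for $\sigma=\mathbf{1}^{\oplus a}\oplus r$, only recovers the \emph{real} parts of $\sum_{q_{v}\le x}\chi_{r}(s(v))\log q_{v}$, which is genuinely insufficient (already visible for finite abelian $ST_{A}$). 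One therefore invokes a Tauberian theorem for complex coefficients; such a statement, tailored to exactly this situation, is established in the appendix to \S 1 of \cite{Ser}, which is also the reference for the theorem itself.
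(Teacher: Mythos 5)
The paper itself does not prove this statement: it is quoted verbatim with a citation to Serre's appendix, so there is no proof in the paper for you to reproduce or diverge from. What you have written is a reconstruction of Serre's argument, and its skeleton is correct: reduce via Peter--Weyl to estimating $\sum_{q_v\le x}\chi_r(s(v))$ for each nontrivial irreducible $r$; pass to $\log L^S$ (or its derivative) and note the $k\ge 2$ terms contribute something holomorphic on $\mathrm{Re}(s)>1/2$; get the easy direction from the order-of-growth of $\log L^S(\sigma,r,A)$ as $\sigma\to1^{+}$ against the $o(\log\frac{1}{\sigma-1})$ bound that equidistribution forces on the partial Dirichlet series; and for the converse invoke a Tauberian theorem. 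Identifying the Tauberian step as the crux is also right.

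Your worry about the complex-coefficient Tauberian step is legitimate, and your observation that the $L^{S}(s,\sigma\otimes\overline\sigma)$ auxiliaries only see $\mathrm{Re}\,\chi_r$ is accurate, but the resolution is more elementary than your writeup suggests. Since $\overline r$ is again irreducible and nontrivial, the hypotheses apply to $L^{S}(s,\overline r,A)$ as well. Writing $f_r(s)=\sum_{v\notin S}\chi_r(s(v))\,(\log q_v)\,q_v^{-s}$, one has $f_{\overline r}(s)=\overline{f_r(\overline s)}$, so $\tfrac12(f_r+f_{\overline r})$ and $\tfrac{1}{2i}(f_r-f_{\overline r})$ are genuine Dirichlet series with \emph{real} coefficients $\mathrm{Re}\,\chi_r(s(v))\log q_v$ and $\mathrm{Im}\,\chi_r(s(v))\log q_v$, and both extend holomorphically to a neighbourhood of $\mathrm{Re}(s)\ge1$. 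Adding $d\cdot\bigl(-(\zeta_F^{S})'/\zeta_F^{S}\bigr)$ with $d=\dim r$ makes the coefficients non-negative without moving the simple pole at $s=1$, so the classical non-negative Wiener--Ikehara theorem applies and gives $\sum_{q_v\le x}\mathrm{Re}\,\chi_r(s(v))\log q_v=o(x)$ and likewise for the imaginary part. (Alternatively, Newman's contour-integration Tauberian theorem handles bounded complex coefficients directly.) So the complex case is not an obstruction requiring the very reference you are trying to reconstruct; citing Serre's appendix for the Tauberian lemma at the end is slightly circular, and the above shows how to avoid it.
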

In order to prove anything about the functions $L^{S}(s,r,A)$ we will have to identify them with partial
$L$-functions of geometric Galois representations. Since the analytic
properties we require only depend on knowing the Euler factors at
all but finitely many places we will not worry too much about the
distinction between the full $L$-function and its various incomplete/partial
$L$-functions. This should hopefully not cause any confusion. The
basic strategy, due to Taylor (\cite{Tay}, see also \cite{HSBT}),
for proving these sorts of analytic results is to combine potential
automorphy results with Brauer's Theorem and known analytic properties
of various (complex analytic) $L$-functions. The former are currently
restricted to regular Galois representations (i.e. with distinct Hodge-Tate
weights); using the latter techniques one may enlarge the class of $L$-functions
whose analytic behaviour can be studied to handle the cases required
for Theorem \ref{thm: main thm}.

\subsection{The cases}

In \cite{FKRS}, 52 cases of possible Sato-Tate groups are identified
(35 of them possible over totally real fields, 34 over $\mathbb{Q}$),
of which we will exclude the case $ST_{A}={\rm USp}(4)$ (the generic
case) as it seems to be intractable using current technology. The
$52$ cases correspond to $52$ so-called ``Galois types'', giving
the abstract structure of ${\rm End}_{L}(A)\otimes_{\mathbb{Z}}\mathbb{R}$
as a representation of $Gal(L/F)$, where $L$ is the minimal extension
of $F$ over which all endomorphisms of $A$ are defined. After excluding
the generic case (Galois type $\textbf{A}$), the remaining Galois
types are divided into 5 ``types'' (or ``absolute types'') $\textbf{B}$,
$\textbf{C}$, $\textbf{D}$, $\textbf{E}$ and $\textbf{F}$. They
correspond to the following arithmetic interpretations:
\begin{itemize}
\item $\textbf{B}$ : $A_{L}$ is either isogenous to a product of nonisogenous
elliptic curves without CM or simple with real multiplication.
\item $\textbf{C}$ : $A_{L}$ is isogenous to a product of two elliptic
curves, one with CM and the other one without.
\item $\textbf{D}$ : $A_{L}$ is either isogenous to a product of nonisogenous
elliptic curves with CM or simple with CM by a quartic field.
\item $\textbf{E}$ : $A_{L}$ is either isogenous to the square of an elliptic
curve without CM or simple with QM (quaternionic multiplication).
\item $\textbf{F}$ : $A_{L}$ is isogenous to the square of an elliptic
curve with CM.
\end{itemize}
We will obtain complete results for cases $\textbf{D}$ and $\textbf{F}$
from a general study of potentially abelian, geometric Galois representations
in \S \ref{sec: abelian} and do a case-by-case study of types $\textbf{B}$,
$\textbf{C}$ and $\textbf{E}$ in section \ref{sec: GL(2)-type}.

\subsection{Some notation and terminology}

When classifying the irreducible representations of the Sato-Tate
groups we will use specific presentations of them as subgroups of
${\rm USp}(4)$, which we will give as a subgroup of ${\rm GL}_{4}(\mathbb{C})$
(using different alternating forms in different cases). To make this
paper easier to read in conjunction with \cite{FKRS}, we have tried
to stick with their notation (which can be found in \S 3 of their
paper) as much as possible; the main difference is that we use $F$
to denote the field of definition of $A$ (they use $k$) and $L$
to denote the minimal field of definition of all endomorphisms (they
use $K$; we will instead use $K$ to denote a quadratic field inside
${\rm End}_{L}(A)\otimes_{\mathbb{Z}}\mathbb{Q}$). We set ${\rm End}_{F^{\prime}}(A)^{0}={\rm End}_{F^{\prime}}(A)\otimes_{\mathbb{Z}}\mathbb{Q}$
for any extension $F^{\prime}/F$. Next, let us recall the notation
of \cite{FKRS} for various matrices and subgroups of ${\rm GL}_{4}(\mathbb{C})$
here for the convenience of the reader, and make a few additions.
Define 
\[
J_{2}=\left(\begin{array}{cc}
0 & 1\\
-1 & 0
\end{array}\right)\in{\rm GL}_{2}(\mathbb{C}),\qquad J=\left(\begin{array}{cc}
0 & J_{2}\\
-J_{2} & 0
\end{array}\right)\in{\rm GL}_{4}(\mathbb{C})
\]
and 
\[
S=\left(\begin{array}{cc}
0 & Id_{2}\\
-Id_{2} & 0
\end{array}\right),\qquad S^{\prime}=\left(\begin{array}{cc}
J_{2} & 0\\
0 & J_{2}
\end{array}\right).
\]
We use the standard presentations 
\[
{\rm U}(n)=\left\{ A\in{\rm GL}_{n}(\mathbb{C})\mid A^{\ast}A=Id_{n}\right\} 
\]
\[
{\rm SU}(n)=\left\{ A\in{\rm U}(n)\mid det(A)=1\right\} 
\]
for the unitary and special unitary groups, where $A^{\ast}$ denotes
the conjugate-transpose of $A$. For type $\textbf{B}$ and $\textbf{C}$
we will use the presentation 
\[
{\rm USp}(4)=\left\{ A\in{\rm U}(4)\mid A^{t}S^{\prime}A=S^{\prime}\right\} .
\]
For type $\textbf{E}$ we use the presentation 
\[
{\rm USp}(4)=\left\{ A\in{\rm U}(4)\mid A^{t}SA=S\right\} .
\]

We record three matrices here that will be of use in later sections:
\[
a=\left(\begin{array}{cc}
J_{2}\\
 & Id_{2}
\end{array}\right),\quad b=\left(\begin{array}{cc}
Id_{2}\\
 & J_{2}
\end{array}\right),\quad c=\left(\begin{array}{cc}
 & Id_{2}\\
-Id_{2}
\end{array}\right)
\]
Note the slight inconsistency $c=S$; note also that $ab=S^{\prime}$.
This follows the usage in \cite{FKRS} (though they have no notation
for what we are calling $S^{\prime}$ other than $ab$); $S$ and
$S^{\prime}$ will only be used as matrices defining symplectic forms
and $a$, $b$, $c$ and their products will only be used to describe
elements of our Sato-Tate groups.

We will use the word character for what is sometimes called a quasicharacter,
i.e. we do not require that characters are unitary unless specified.
We will follow the usage in \cite{Har} and say that a meromorphic
function $f\,:\, U\rightarrow\mathbb{C}$ where $U\subseteq\mathbb{C}$
is open and contains $\{Re\, s\geq1\}$ is invertible if it is holomorphic
and non-vanishing on $\{Re\, s\geq1\}$. Following the usage in \cite{FKRS}
we will let $Frob_{v}$, for $v$ a finite place of a number field
$F$, denote an arithmetic Frobenius element, and we will use the
Tate module of our abelian surfaces (as opposed to their duals which
we used in the introduction). In accordance with this choice we will
normalise local class field theory by sending uniformizers to arithmetic
Frobenii and attach Galois representation to automorphic representations
of general linear groups by matching up Satake parameters with eigenvalues
of arithmetic Frobenii.

\section{\label{sec: abelian}Equidistribution Laws for Potentially Abelian
geometric Galois Representations}

The aim of this section is to record equidistribution results for
$\ell$-adic Galois representations that are geometric and potentially
abelian. This is done by giving a natural correspondence between such
Galois representation and Weil parameters (i.e. continuous complex
representations of the global Weil group) that is characterized by
the fact that it preserves trace of Frobenius at unramified places.
For unitary Weil parameters satisfying a mild condition on the image
one may prove an equidistribution result. Neither statements nor proofs
in this section should surprise an expert, but we have not found a
reference for some of the results that we present (the abelian case
is done in \cite{Ser}). Some of the results in this section are covered in \cite{Tat} and \cite{Wei}; we have included a few details for the convenience of the reader.

\subsection{Weil parameters and potentially abelian Galois representations}

Let us start by defining the main objects of interest in this section. At the suggestion of the referee, we sketch the definition of the Weil group of a number field $F$, and we refer to \cite{AT} and \cite{Tat} for the details. Given a finite Galois extension $E/F$, the relative Weil group $W_{E/F}$ is the extension
$$ 1 \rightarrow C_{E} \rightarrow W_{E/F} \rightarrow Gal(E/F) \rightarrow 1 $$
defined by the fundamental class $\alpha_{E/F}\in H^{2}(Gal(E/F),C)$ of global class field theory, where $C_{E}=E^{\times}\backslash \mathbb{A}_{E}^{\times}$ is the idele class group and $C= \varinjlim_{E}C_{E}$ (direct limit taken over all finite $E/F$). $W_{E/F}$ inherits a group topology from $C_{E}$ and the Weil group $W_{F}$ is the topological group $W_{F}=\varprojlim_{E}W_{E/F}$. 
\begin{defn}
1) A Weil parameter is a continuous semisimple complex finite-dimensional
representation of $W_{F}$.

2) A Weil parameter $r\,:\, W_{F}\rightarrow{\rm GL}_{n}(\mathbb{C})$
is said to be algebraic if for any place $v\mid\infty$ of $F$, $r|_{W_{\overline{F_{v}}}}$
is a direct sum of characters of the form $z\mapsto z^{p}\bar{z}^{q}$
(for some $p,q\in\mathbb{Z}$, using an identification $W_{\overline{F_{v}}}\cong\mathbb{C}^{\times}$).

3) A semisimple Galois representation $\rho\,:\, G_{F}\rightarrow{\rm GL}_{n}(\overline{\mathbb{Q}}_{\ell})$
is said to be potentially abelian if there exists a finite extension
$E/F$ such that $\rho|_{G_{E}}$ is a direct sum of characters.
\end{defn}
There is a strong link between algebraic Weil parameters and semisimple
geometric potentially abelian $\ell$-adic representations as the
following well-known proposition indicates. Recall that a continuous
irreducible representation of a topological group is called primitive
if it is not induced from any open finite index subgroup.
\begin{prop}
\label{prop: primitive}1) If $r\,:\, W_{F}\rightarrow{\rm GL}_{n}(\mathbb{C})$
is a primitive Weil parameter, then there exists a finite image representation
$\sigma\,:\, G_{F}\rightarrow{\rm GL}_{n}(\mathbb{C})$ and a character
$\chi\,:\, W_{F}\rightarrow\mathbb{C}^{\times}$ such that $r=\sigma\otimes\chi$.

2) If $\rho\,:\, G_{F}\rightarrow{\rm GL}_{n}(\overline{\mathbb{Q}}_{\ell})$
is primitive and potentially abelian, then there is a finite image
representation $\tau\,:\, G_{F}\rightarrow{\rm GL}_{n}(\overline{\mathbb{Q}}_{\ell})$
and a character $\epsilon\,:\, G_{F}\rightarrow\overline{\mathbb{Q}}_{\ell}^{\times}$
such that $\rho=\tau\otimes\epsilon$.\end{prop}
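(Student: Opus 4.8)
The plan is to prove both parts by the same two steps: use Clifford theory to reduce to the situation where the representation, restricted to a suitable normal subgroup, is isotypic of a single character, and then untwist by an extension of that character.

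For part (2), since $\rho$ is potentially abelian we may choose a finite Galois extension $E/F$ such that $\rho|_{G_E}$ is a direct sum of characters. The finite group $\mathrm{Gal}(E/F)$ permutes the finitely many characters occurring in $\rho|_{G_E}$, transitively because $\rho$ is irreducible; were more than one to occur, $\rho$ would be induced from the proper open subgroup of $G_F$ lying over the stabilizer in $\mathrm{Gal}(E/F)$ of one of them, contradicting primitivity. Hence $\rho|_{G_E}=\chi^{\oplus n}$ for a single character $\chi\,:\,G_E\rmap\overline{\mathbb{Q}}_\ell^\times$, which is then automatically $\mathrm{Gal}(E/F)$-invariant. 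Granting for the moment that $\chi$ extends to a character $\epsilon\,:\,G_F\rmap\overline{\mathbb{Q}}_\ell^\times$, the twist $\tau:=\rho\otimes\epsilon^{-1}$ is trivial on $G_E$, hence factors through $\mathrm{Gal}(E/F)$ and so has finite image, and $\rho=\tau\otimes\epsilon$.

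Part (1) runs in parallel, with the connected component $W_F^0$ of the global Weil group playing the role of $G_E$: recall that $W_F^0$ is a closed abelian normal subgroup with $W_F/W_F^0\cong G_F$. By Clifford's theorem $r|_{W_F^0}$ is semisimple, hence a direct sum of continuous characters of $W_F^0$, and exactly as above primitivity forces $r|_{W_F^0}=\psi^{\oplus n}$ for a single $W_F$-invariant character $\psi\,:\,W_F^0\rmap\mathbb{C}^\times$. If $\psi$ extends to a character $\chi\,:\,W_F\rmap\mathbb{C}^\times$, then $\sigma:=r\otimes\chi^{-1}$ is trivial on $W_F^0$ and hence factors through the profinite group $W_F/W_F^0\cong G_F$; since $\mathrm{GL}_n(\mathbb{C})$ has no small subgroups, a continuous homomorphism from a profinite group into it has open kernel and therefore finite image, so $\sigma$ has finite image and $r=\sigma\otimes\chi$.

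The one substantial point, common to both parts, is the extension of the invariant character $\chi$ (resp.\ $\psi$) across the quotient $\mathrm{Gal}(E/F)$ (resp.\ $W_F/W_F^0\cong G_F$). The obstruction lies in $H^2$ of that quotient with coefficients in $\overline{\mathbb{Q}}_\ell^\times$ (resp.\ $\mathbb{C}^\times$); as the quotient is (pro)finite, this is $H^2$ with coefficients in the group $\mu_\infty\cong\mathbb{Q}/\mathbb{Z}$ of roots of unity. In the Weil-parameter case it vanishes outright, since $H^2(G_F,\mathbb{Q}/\mathbb{Z})=0$ by Tate's theorem for number fields; in the $\ell$-adic case the obstruction in $H^2(\mathrm{Gal}(E/F),\mathbb{Q}/\mathbb{Z})$ dies under inflation to $H^2(G_F,\mathbb{Q}/\mathbb{Z})=0$, and hence already vanishes after replacing $E$ by a suitable larger finite Galois extension (and $\chi$ by its restriction, which still satisfies $\rho|_{G_E}=\chi^{\oplus n}$). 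I expect this cohomological input --- Tate's vanishing theorem --- to be the crux; everything else (Clifford's theorem, the Mackey criterion for induction from open finite-index subgroups, the no-small-subgroups property of $\mathrm{GL}_n(\mathbb{C})$, and the standard facts about $W_F^0$) is soft. For part (1) one may alternatively just invoke the classical theory of representations of the global Weil group, whose abelian case is treated in the appendix to \cite{Ser}.
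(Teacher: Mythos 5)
Your argument is correct in substance and rests on the same two pillars as the paper's --- Clifford's theorem to identify a scalar-acting abelian normal subgroup, and Tate's theorem $H^2(G_F,\mathbb{Q}/\mathbb{Z})=0$ --- but you route through a slightly different application of the latter. The paper observes that once the abelian normal subgroup acts by scalars, $\operatorname{proj} r$ (resp.\ $\operatorname{proj}\rho$) factors through $\mathrm{Gal}(E/F)$, and then invokes Tate's vanishing directly to \emph{lift the projective representation} to a genuine finite-image representation $\sigma$; the character $\chi$ then appears at the end as the ratio $r\otimes\sigma^{-1}$ (same projectivization $\Rightarrow$ differ by a character). You instead \emph{extend the scalar character} of the abelian normal subgroup across the quotient and untwist; the obstruction you describe is the transgression in the five-term inflation--restriction sequence, and it lands in the same $H^2$ that Tate kills. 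These are two faces of the same cohomological fact, and both are fine, but the paper's version is a one-step quotation of the standard lifting theorem, whereas yours requires the (true, but slightly fussier) compatibility of transgression with enlarging $E$ and the splitting $\overline{\mathbb{Q}}_\ell^\times\cong\mu_\infty\oplus(\mathbb{Q}\text{-vector space})$.

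For part (1) the difference is a bit larger. The paper first uses the no-small-subgroups property of $\mathrm{GL}_n(\mathbb{C})$ to factor $r$ through a \emph{finite-level} Weil group $W_{E/F}$, and only then applies Clifford with $C_E=W_E^{\mathrm{ab}}\lhd W_{E/F}$; everything happens over the finite group $\mathrm{Gal}(E/F)$, so the obstruction cocycle automatically takes only finitely many values and the reduction to $\mu_\infty$-coefficients is immediate. You work directly with $W_F^0\lhd W_F$ and the infinite quotient $G_F$. The facts you invoke about $W_F^0$ (closed, abelian, normal, $W_F/W_F^0\cong G_F$) are correct for number fields (Tate, \emph{Number theoretic background}, \S1), but you then have a continuous $S^1$-valued obstruction cocycle on $G_F$ and you need to argue it factors through a finite quotient before you can land in discrete $H^2(G_F,\mathbb{Q}/\mathbb{Z})$ --- which in practice means passing to $W_{E/F}$ anyway. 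So your route buys nothing here and adds a topological wrinkle that the paper's ordering of steps makes disappear. Finally, a small citation point: the paper credits its part (1) argument to Tate's Corvallis article (\cite{Tat} \S2.2.3), not to the appendix of \cite{Ser}, which treats the abelian ($n=1$) equidistribution theory rather than the structure of primitive Weil parameters.
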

\begin{proof}
1) We give the proof, following \cite[\S 2.2.3]{Tat}.  As $W_{F}=\varprojlim_{E} W_{E/F}$ and
${\rm GL}_{n}(\mathbb{C})$ has no small subgroups we can find a finite Galois
extension $E/F$ such that $r$ factors through $W_{E/F}$.
Then $W_{E}^{ab}$ is an abelian normal subgroup of $W_{E/F}$ so
since $r$ is primitive Clifford's theorem tells us that $W_{E}^{ab}$
acts by scalars. Thus ${\rm proj}\, r$ factors through $Gal(E/F)$.
By Tate's result that $H^{2}(G_{F},\mathbb{C}^{\times})=1$ ${\rm proj}\, r$
lifts to representation $\sigma\,:\, G_{F}\rightarrow{\rm GL}_{n}(\mathbb{C})$
(necessarily of finite image and irreducible). As $r$ and $\sigma$
have the same projectivization, there is a character $\chi$ of $W_{F}$
such that $r=\sigma\otimes\chi$.

2) The proof is the same as 1), with a few minor changes. First, by
assumption we can find $E/F$ finite such that $\rho|_{G_{E}}$ is
abelian. As in 1) Clifford's theorem implies that $\rho|_{G_{E}}$
acts by scalars and hence that ${\rm proj}\,\rho$ factors through
$Gal(E/F)$. Using $\iota$ we get a projective representation to
${\rm PGL}_{n}(\mathbb{C})$ which we may lift as before to a finite
image representation of $G_{F}$; using $\iota$ again we get a finite
image irreducible representation $\tau\,:\, G_{F}\rightarrow{\rm GL}_{n}(\overline{\mathbb{Q}}_{\ell})$
with ${\rm proj\,\rho={\rm proj\,\tau}}$, and we may conclude that
there is a character $\epsilon\,:\, G_{F}\rightarrow\overline{\mathbb{Q}}_{\ell}^{\times}$
such that $\rho=\tau\otimes\epsilon$.\end{proof}
\begin{rem}
Of course the use of $\iota$ in the proof of 2) is unnecessary (and,
as the author is well aware of, may seem offensive to some). A short
argument with projective representations of finite groups shows that
${\rm proj\,\rho}$ takes values in ${\rm PGL}_{n}(\overline{\mathbb{Q}})$
and so rather than using $\iota$ it is enough to use the embeddings
$\overline{\mathbb{Q}}\hookrightarrow\mathbb{C}$ and $\overline{\mathbb{Q}}\hookrightarrow\overline{\mathbb{Q}}_{\ell}$.
\end{rem}
The next theorem we record is also well known, see e.g. \cite[\S 1, \S 8]{Far} and the references within. The corollary is essentially
(a weak version of) \cite[Proposition 7.12]{Far}; we give a proof
based on Proposition \ref{prop: primitive}.
\begin{thm}
\label{thm: characters}There exists a bijective correspondence between
algebraic characters of $W_{F}$ and geometric $\ell$-adic characters
of $G_{F}$ characterized by the property that if $\chi\leftrightarrow\epsilon$,
then for $v\nmid\ell\infty$, $\chi|_{W_{F_{v}}}$ is unramified if
and only if $\epsilon|_{G_{F_{v}}}$ is unramified, and $\chi(Frob_{v})=\iota(\epsilon(Frob_{v}))\in\overline{\mathbb{Q}}$
. 

Moreover, if $\chi\leftrightarrow\epsilon$, there exists a (necessarily
unique) integer $w$, the weight of $\chi$ (or $\epsilon$), such
that:

1) For $v\nmid\ell\infty$ such that $\chi|_{W_{F_{v}}}$ is unramified,
all conjugates of $\chi(Frob_{v})$ have Archimedean absolute value
$q_{v}^{-w/2}$, where $q_{v}$ is the size of the residue field of
$F_{v}$.

2) For $v\mid\infty$, if $\chi|_{W_{\overline{F_{v}}}}$ has the
form $z\mapsto z^{p}\bar{z}^{q}$ (for some identification $W_{\overline{F_{v}}}\cong\mathbb{C}^{\times}$)
then $p+q=w$.\end{thm}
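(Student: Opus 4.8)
The plan is to build the correspondence between algebraic characters of $W_F$ and geometric $\ell$-adic characters of $G_F$ by reducing, via class field theory, to a statement about Hecke characters. First I would recall that $W_F^{ab}$ is identified with $\mathbb{A}_F^\times/\overline{F^\times(F_\infty^\times)^\circ}$ by the reciprocity map (normalized, as the paper stipulates, to send uniformizers to arithmetic Frobenii), so a continuous character $\chi\colon W_F\to\mathbb{C}^\times$ is the same thing as a continuous character of $\mathbb{A}_F^\times/F^\times$; likewise a continuous character $\epsilon\colon G_F\to\overline{\mathbb{Q}}_\ell^\times$ factors through $G_F^{ab}$, which is the profinite completion of $\mathbb{A}_F^\times/F^\times$. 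The algebraicity condition on $\chi$ at the infinite places translates, by the classical analysis of Hecke characters of type $A_0$ (Weil's ``Größencharaktere mit Werten in einem algebraischen Zahlkörper''), into the statement that the infinity-type of $\chi$ is given by an algebraic homomorphism $(\mathrm{Res}_{F/\mathbb{Q}}\mathbb{G}_m)_{\mathbb{R}}\to\mathbb{G}_{m,\mathbb{R}}$, equivalently that $\chi$ on $F_\infty^\times$ is a finite-order twist of $z\mapsto\prod_\sigma \sigma(z)^{p_\sigma}\overline{\sigma(z)}^{q_\sigma}$. One then knows that the values $\chi(\varpi_v)$ at uniformizers lie in a fixed number field; choosing an embedding of that field into $\overline{\mathbb{Q}}_\ell$ compatibly with $\iota$ and combining the finite-order part with the $\ell$-adic avatar of the algebraic infinity-type (the standard construction attaching to an algebraic character of a torus its $\ell$-adic realization on $\ell$-power torsion points) produces a continuous $\ell$-adic character $\epsilon$ of $\mathbb{A}_F^\times/F^\times$, hence of $G_F$. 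That $\epsilon$ is geometric, i.e.\ unramified outside $\ell$ and a finite set and de Rham at $\ell$, follows because it is built from a finite-order character and an algebraic torus character; and the matching of Frobenius values at $v\nmid\ell\infty$ is immediate from the construction since away from $\ell$ the $\ell$-adic and complex realizations of the infinity-type are trivial and the two characters agree on uniformizers.

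For the ``moreover'' part I would extract the weight $w$ directly from the infinity-type. At a place $v\mid\infty$ the algebraic part of $\chi$ restricted to $W_{\overline{F_v}}\cong\mathbb{C}^\times$ has the form $z\mapsto z^{p_v}\bar z^{q_v}$, and I would first show that the integer $p_v+q_v$ is independent of $v$: this is forced by the fact that the algebraic infinity-type of a Hecke character of $F$ must be invariant under $\mathrm{Gal}(\overline{\mathbb{Q}}/\mathbb{Q})$ acting on the set of infinite places (equivalently, the norm character $z\mapsto|z|$ is the only algebraic character killing the closure of $F^\times$ up to ones with $p_v+q_v$ constant and the ``reflex'' compatibility $p_v=q_{\bar v}$ at complex places, while at real places $p_v=q_v$ automatically). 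Set $w$ equal to this common value $p_v+q_v$; that proves 2). For 1), I would use the fact that the absolute value of $\chi$ is itself a Hecke character trivial on the connected component, hence of the form $\|\cdot\|^{-w/2}$ for some real $t$; evaluating at an archimedean place against the infinity-type pins $t=w$, and then $|\chi(\mathrm{Frob}_v)|=q_v^{-w/2}$ at unramified $v$ by the defining compatibility of the idelic norm with local norms — and the same holds for every Galois conjugate of $\chi$ since conjugation permutes the infinity-type components without changing $p_v+q_v$.

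The main obstacle, and the only place requiring genuine care rather than bookkeeping, is the interface between the archimedean algebraicity condition as stated (a condition on $r|_{W_{\overline{F_v}}}$) and the \emph{global} algebraicity of the infinity-type needed to feed into Weil's theorem and to produce a single number field containing all the Frobenius values. Concretely: one must check that ``algebraic at each $v\mid\infty$ separately'' plus ``character of $\mathbb{A}_F^\times/F^\times$'' already implies the infinity-type is the restriction of a morphism of algebraic groups $\mathrm{Res}_{F/\mathbb{Q}}\mathbb{G}_m\to\mathbb{G}_m$, and in particular that the exponents $p_v,q_v$ satisfy the constraints ($p_v=q_v$ at real places from the requirement that $\chi$ be trivial on $\mathbb{R}_{>0}\subset F_v^\times$ modulo units, and the pairing $p_v=q_{\bar v}$ coming from complex conjugation on the set of embeddings). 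This is where one invokes that $F^\times$ (more precisely the closure of the units $\mathcal{O}_F^\times$) is Zariski-dense enough in $(\mathrm{Res}_{F/\mathbb{Q}}\mathbb{G}_m)_\infty$ to force the continuous character to be algebraic on the whole archimedean factor once it is algebraic on each $W_{\overline{F_v}}$, i.e.\ Chevalley's theorem / Dirichlet's unit theorem. Once that global structure is in hand, both the construction of $\epsilon$ and the uniqueness and properties of $w$ are formal; I would therefore spend most of the written proof on this point and treat the rest as an application of standard results of Weil, Deligne and Serre–Tate on characters of type $A_0$ and their $\ell$-adic realizations.
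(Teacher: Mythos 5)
The paper does not actually prove Theorem \ref{thm: characters}; it records it as ``well known'' and cites \S\S 1, 8 of Fargues's notes (and the references therein) for the proof. So there is no in-paper argument to compare against. Your reconstruction follows the standard route: identify characters of $W_F$ and of $G_F$ with characters of the idele class group via class field theory, translate local algebraicity at the archimedean places into being a Hecke character of type $A_0$, invoke Weil's theorem to get the Frobenius values into a fixed number field, and build the $\ell$-adic avatar from the finite part together with the $\ell$-adic realization of the algebraic infinity type. This is the right approach and the skeleton is sound, but a few details are off and a few are missing.

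First, the opening identification is misstated: $W_F^{ab}\cong \mathbb{A}_F^\times/F^\times=C_F$, not $\mathbb{A}_F^\times/\overline{F^\times(F_\infty^\times)^\circ}$; the latter quotient, $\pi_0(C_F)$, is what computes $G_F^{ab}$. You implicitly use the correct fact in the very next clause, and it matters: it is exactly because $W_F^{ab}\cong C_F$ that a character of $W_F$ sees the full connected archimedean part, which is where the infinity type lives. Second, the account of the archimedean constraints is muddled. The equality $p_v=q_v$ at a real place $v$ is a purely \emph{local} consequence of the structure of $W_{\mathbb{R}}$ (every character of $W_{\mathbb{R}}$ restricted to $W_{\mathbb{C}}$ factors through the norm $z\mapsto z\bar z$, hence is $z\mapsto (z\bar z)^s$), not something you need to extract from triviality on units; and the asserted constraint $p_v=q_{\bar v}$ at complex places is vacuous, since $v$ and $\bar v$ name the same place. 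What you genuinely need the unit theorem for is exactly the thing you describe next: once one knows $|\chi|$ is a positive-real-valued character of $C_F$ and hence of the form $\|\cdot\|^t$, comparing with $|\chi_v|=|\cdot|_v^{(p_v+q_v)/2}$ at each $v\mid\infty$ forces $p_v+q_v=2t$ to be a single integer $w$, and then $|\chi(\mathrm{Frob}_v)|=q_v^{-w/2}$ is immediate; Galois conjugates are handled by observing that conjugation permutes the infinity-type exponents without changing $w$. So part (1) and (2) of the theorem come out correctly from your argument once these points are repaired. Finally, the sketch is one-directional: you should say a word about recovering the algebraic Weil character from a geometric $\ell$-adic character (this uses the de Rham condition at $\ell$ to produce the local algebraic part via Lubin--Tate/Fontaine), and about why the two constructions are mutually inverse. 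Both sides are in bijection with algebraic Hecke characters, so this is bookkeeping, but it needs to be recorded to justify the word ``bijective'' in the statement.
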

\begin{cor}\label{cor: correspondence}
1) There exists a bijective correspondence $r\mapsto\rho_{r}$, $\rho\mapsto r_{\rho}$
between irreducible algebraic Weil parameters and irreducible potentially
abelian geometric $\ell$-adic representations characterized by the
property that if $r\leftrightarrow\rho$, then for $v\nmid\ell\infty$
$r|_{W_{F_{v}}}$ is unramified if and only if $\rho|_{G_{F_{v}}}$
is unramified, and $tr(r(Frob_{v}))=\iota(tr(\rho(Frob_{v}))$.

2) There exists a bijective correspondence between semisimple algebraic
Weil parameters and semisimple potentially abelian geometric $\ell$-adic
representations characterized by the property that if $r\leftrightarrow\rho$,
then for $v\nmid\ell\infty$ $r|_{W_{F_{v}}}$ is unramified if and
only if $\rho|_{W_{F_{v}}}$ is unramified, and $tr(r(Frob_{v}))=\iota(tr(\rho(Frob_{v}))$.\end{cor}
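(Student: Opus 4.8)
The plan is to deduce the corollary from Proposition \ref{prop: primitive} and Theorem \ref{thm: characters} by reducing everything to the primitive (hence twist-of-finite-image) case via induction, and then pushing the resulting correspondence along direct sums. First I would establish part 1) for primitive objects. Given an irreducible algebraic Weil parameter $r$, apply Proposition \ref{prop: primitive}.1 when $r$ is primitive to write $r = \sigma \otimes \chi$ with $\sigma$ of finite image and $\chi$ a character; one checks (using that $r$ is algebraic and $\sigma$ has finite image, hence all Hodge--Tate--type data comes from $\chi$) that $\chi$ is algebraic. Then let $\rho_r := \sigma' \otimes \epsilon$, where $\sigma'$ is the same finite-image representation viewed with $\overline{\mathbb{Q}}_\ell$-coefficients via $\iota$ (or via $\overline{\mathbb{Q}} \hookrightarrow \overline{\mathbb{Q}}_\ell$ as in the remark) and $\epsilon \leftrightarrow \chi$ under Theorem \ref{thm: characters}. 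Since $\sigma'$ has finite image it is automatically geometric, and a tensor product of geometric representations is geometric, so $\rho_r$ is potentially abelian and geometric; it is irreducible because $\sigma'$ is and twisting by a character preserves irreducibility. The trace-at-Frobenius condition $tr(r(Frob_v)) = \iota(tr(\rho_r(Frob_v)))$ follows from $tr(\sigma(Frob_v)) = \iota(tr(\sigma'(Frob_v)))$ (these agree on the nose after applying $\iota$, as $\sigma, \sigma'$ are ``the same'' finite-image representation) together with $\chi(Frob_v) = \iota(\epsilon(Frob_v))$ from Theorem \ref{thm: characters}, and likewise for ramification.

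Next I would promote this to all irreducible (not necessarily primitive) objects by induction on dimension. Any irreducible Weil parameter $r$ is induced from a primitive representation $r_0$ of some open finite-index subgroup $W_{F'} \le W_F$, i.e. $r = \mathrm{Ind}_{W_{F'}}^{W_F} r_0$; one checks $r_0$ is again algebraic (algebraicity is detected at archimedean places and is inherited by restriction), so the primitive case gives $\rho_{r_0}$, a geometric potentially abelian representation of $G_{F'}$, and I set $\rho_r := \mathrm{Ind}_{G_{F'}}^{G_F} \rho_{r_0}$. Induction preserves geometricity and ``potentially abelian'', and the Frobenius-trace compatibility for induced representations reduces to that for $r_0$ via the usual character formula for induced traces at unramified primes (the contributions come from the places of $F'$ above $v$, and $\iota$ is compatible with field norms on residue fields). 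For the reverse direction I run the same argument starting from an irreducible potentially abelian geometric $\rho$, using Proposition \ref{prop: primitive}.2. Bijectivity: two algebraic Weil parameters with the same Frobenius traces at almost all unramified $v$ are isomorphic by Chebotarev plus semisimplicity, and similarly on the Galois side, so the correspondence is well-defined independent of the chosen presentation as an induction of a twist, and $r \mapsto \rho_r$, $\rho \mapsto r_\rho$ are mutually inverse. Part 2) then follows formally: decompose a semisimple object into irreducibles, apply part 1) summand-by-summand, and note that trace is additive so the Frobenius-trace characterization still pins down the correspondence up to isomorphism; well-definedness independent of the decomposition is again Chebotarev.

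The main obstacle I expect is not any single hard step but rather the bookkeeping needed to check that all the constructions are genuinely independent of the auxiliary choices (the primitive sub-datum, the subgroup $W_{F'}$, the lift of the projective representation) and that they glue consistently — concretely, that $\rho_r$ defined via one induction-of-a-twist presentation of $r$ agrees with that from another. The clean way to finesse this is to define the correspondence \emph{only} via the Frobenius-trace property and prove existence and uniqueness separately: uniqueness is immediate from Chebotarev density plus semisimplicity on both sides, and existence is what the primitive case plus the induction step supply. A secondary technical point is verifying that algebraicity of a Weil parameter transfers correctly to geometricity of its $\ell$-adic partner through twisting and induction — but here twisting is handled by the ``weight'' bookkeeping in Theorem \ref{thm: characters}, finite-image representations are unramified away from a finite set and Hodge--Tate with all weights zero, and both geometricity and its analogue for Weil parameters (being algebraic) are stable under $\mathrm{Ind}$ and $\otimes$, so this reduces to quoting standard stability properties rather than proving anything new.
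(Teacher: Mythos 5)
Your proposal follows essentially the same path as the paper's proof: the character case is Theorem~\ref{thm: characters}, the finite-image case is handled by transporting along $\iota$, the primitive case uses the $\sigma\otimes\chi$ (resp.\ $\tau\otimes\epsilon$) decomposition from Proposition~\ref{prop: primitive}, general irreducibles are reached by induction from a primitive representation of an open finite-index subgroup (with the observation that algebraicity passes between $r$ and $r'$ when $r=\mathrm{Ind}\,r'$), and part~2) follows by taking direct sums. Your explicit appeal to Chebotarev density plus semisimplicity to establish well-definedness independent of the chosen primitive presentation is a worthwhile elaboration of a step the paper compresses to ``it is clear that these constructions do the job and are inverse to each other.''
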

\begin{proof}
2) follows immediately from 1). To prove 1), note that it is clear
for finite image representations on both sides; just use $\iota$.
For characters this is Theorem \ref{thm: characters}. Thus for primitive
representations we write $r=\sigma\otimes\chi$, $\rho=\tau\otimes\epsilon$
as in Proposition \ref{prop: primitive} and define $\rho_{r}=\rho_{\sigma}\otimes\rho_{\chi}$
and $r_{\rho}=r_{\tau}\otimes r_{\epsilon}$. Then for general irreducibles
$r={\rm Ind}\, r^{\prime}$, $\rho={\rm Ind}\,\rho^{\prime}$ with
$r^{\prime}$, $\rho^{\prime}$ primitive, set $\rho_{r}={\rm Ind}\,\rho_{r^{\prime}}$,
$r_{\rho}={\rm Ind}\, r_{\rho^{\prime}}$; it is clear that these
constructions do the job and are inverse to each other (note that
$r={\rm Ind}\, r^{\prime}$ is algebraic if and only if $r^{\prime}$
is algebraic).
\end{proof}

\subsection{Equidistribution laws}

We begin this section with a well-known fact on the $L$-functions
of Weil parameters, and hence of their associated Galois representations in the case when they are algebraic.

\begin{prop}
\label{prop: mero cont}Let $r\,:\, W_{F}\rightarrow{\rm GL}_{n}(\mathbb{C})$
be an irreducible unitary Weil parameter. Then the Artin $L$-function
$L(s,r)$ has meromorphic continuation to all of $\mathbb{C}$.
Moreover it is invertible (i.e. holomorphic and non-vanishing) on
$Re\, s\geq1$ unless $r=|\cdot|^{i\lambda}$ for some $\lambda\in\mathbb{R}$.\end{prop}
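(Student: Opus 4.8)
The plan is to reduce everything to the known analytic properties of Hecke $L$-functions via the theory of induced representations, exactly as in the classical work of Artin–Brauer combined with Hecke's results. First I would recall that a continuous unitary Weil parameter $r\,:\,W_F\rightarrow{\rm GL}_n(\mathbb{C})$ factors through $W_{E/F}$ for some finite Galois $E/F$ (no small subgroups argument), so $r$ has ``finite-type'' image up to the abelian part; in particular, after possibly passing to primitive constituents, Proposition \ref{prop: primitive} lets me write $r=\sigma\otimes\chi$ with $\sigma$ of finite image and $\chi$ a (unitary) character of $W_F$. Since we assumed $r$ irreducible, $r$ is either primitive or induced from a primitive representation $r'$ of some open finite-index subgroup $W_{F'}$, i.e.\ $r={\rm Ind}_{W_{F'}}^{W_F}r'$, and then $L(s,r)=L(s,r')$ by inductivity of Artin $L$-functions. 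So the whole question is reduced to the primitive case, hence to $L(s,\sigma\otimes\chi)$ with $\sigma$ of finite image.

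Next I would handle the finite-image twist $\sigma\otimes\chi$ itself by Brauer induction: writing the character of $\sigma$ as a $\mathbb{Z}$-linear combination $\sum_i n_i\,{\rm Ind}_{H_i}^{G}\psi_i$ of induced characters from (virtual) one-dimensional characters $\psi_i$ of subgroups $H_i$ corresponding to intermediate fields $F_i$, inductivity gives the factorization $L(s,\sigma\otimes\chi)=\prod_i L(s,\psi_i\cdot\chi|_{W_{F_i}})^{n_i}$ into a product (with integer exponents) of Hecke $L$-functions $L(s,\psi_i\chi_i)$ over the various $F_i$, where each $\psi_i\chi_i$ is a unitary Hecke character. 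By Hecke (or Tate's thesis) each such $L$-function has meromorphic continuation to all of $\mathbb{C}$, which immediately gives the claimed meromorphic continuation of $L(s,r)$. For the invertibility on ${\rm Re}\,s\geq 1$: each Hecke $L$-function $L(s,\psi_i\chi_i)$ with $\psi_i\chi_i$ a nontrivial unitary character is entire and non-vanishing on ${\rm Re}\,s\geq 1$ (the non-vanishing on the line ${\rm Re}\,s=1$ is the analogue of the non-vanishing $\zeta(1+it)\neq 0$, proved by the classical $3+4\cos\theta+\cos 2\theta$ trick), while for $\psi_i\chi_i$ trivial one gets a Dedekind zeta $\zeta_{F_i}(s)$, which has a simple pole at $s=1$ and no other poles or zeros in the region. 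Individually these are invertible on ${\rm Re}\,s\geq 1$ except for the simple poles at $s=1$ coming from trivial characters.

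The main obstacle — and the only place where irreducibility of $r$ is genuinely used — is to argue that these $s=1$ poles and zeros all cancel in the product $\prod_i L(s,\psi_i\chi_i)^{n_i}$, so that $L(s,r)$ is actually holomorphic and non-vanishing at $s=1$ unless $r$ itself is a power $|\cdot|^{i\lambda}$ of the norm character. The clean way to see this is \emph{not} to chase the combinatorics of the Brauer decomposition but to use that the order of vanishing of $L(s,r)$ at $s=1$ is ${\rm ord}_{s=1}L(s,r)=-\langle r,\,|\cdot|^{-i?}\rangle$-type inner-product statements; more precisely, for a unitary Weil parameter the order of the pole of $L(s,r)$ at $s=1$ equals the multiplicity of the trivial representation in $r$, and its order of vanishing is zero — this follows from the factorization together with the fact that, by the inner-product formula for induced characters (Frobenius reciprocity), $\sum_i n_i$ counts exactly the multiplicity with which the relevant trivial/one-dimensional pieces occur, which for irreducible $r$ is $1$ if $r\cong|\cdot|^{i\lambda}$ and $0$ otherwise. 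I would phrase this by twisting: replace $r$ by $r\otimes|\cdot|^{-s_0}$ to reduce the analysis at a general point $s_0$ with ${\rm Re}\,s_0\geq 1$ to the analysis at $s=1$, note that $L(s,r)$ can only have a pole or zero at $s_0$ if $r\otimes|\cdot|^{1-s_0}$ contains the trivial character, which forces $r\cong|\cdot|^{s_0-1}$, and unitarity of $r$ then forces $s_0=1+i\lambda$ with $\lambda\in\mathbb{R}$ and $r=|\cdot|^{i\lambda}$. Assembling these pieces gives the proposition.
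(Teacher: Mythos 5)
Your proposal is correct and follows essentially the same route as the paper: reduce via Proposition~\ref{prop: primitive} to the primitive case $r=\sigma\otimes\chi$, apply Brauer's theorem to factor $L(s,r)$ into Hecke $L$-functions with integer exponents, invoke Hecke's results for meromorphic continuation and non-vanishing on $\mathrm{Re}\,s\geq 1$, and use Frobenius reciprocity to cancel the poles at $s=1$. The only cosmetic difference is that the paper makes explicit the dichotomy between $\chi$ of the form $\epsilon\lvert\cdot\rvert^{i\lambda}$ (where one reduces to $\sigma$ of finite image and the Frobenius reciprocity count takes place in an honest finite group) versus $\chi$ not of that form (where no constituent Hecke character is ever $\lvert\cdot\rvert^{i\mu}$, so no pole or zero arises at all on $\mathrm{Re}\,s\geq1$), whereas you subsume both cases into a single ``order of pole equals multiplicity of the trivial constituent'' statement plus a twisting argument; unwinding your claim rigorously would lead you back to precisely this case split.
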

\begin{proof}
When $n=1$ this is well known by class field theory. Let $n\geq2$.
If $r$ is primitive then we write $r=\sigma\otimes\chi$ as in Proposition
\ref{prop: primitive}. Assume first that $\chi$ is $\epsilon|\cdot|^{i\lambda}$
for some finite order character $\epsilon$ and $\lambda\in\mathbb{R}$.
Then without loss of generality $\epsilon=1$ (just incorporate it
in $\sigma$) and $L(s,r)=L(s+i\lambda,\sigma)$ so it is enough to
treat the case $r=\sigma$ finite image. By Brauer's theorem we may
write $r$ as a virtual direct sum 
\[
r=\bigoplus_{i\in I}\left({\rm Ind}_{W_{E_{i}}}^{W_{F}}\epsilon_{i}\right)^{\oplus a_{i}}
\]
where the $E_{i}/F$ are finite, $\epsilon_{i}$ is a finite order
character, $a_{i}\in\mathbb{Z}$ and $I$ is a finite indexing set.
Thus $L(s,r)=\prod_{i}L(s,\epsilon_{i})^{a_{i}}$ from which we deduce
the meromorphic continuation and the invertibility, except possibly
at $s=1$. The order of vanishing at $s=1$ is $-\sum_{i\in T}a_{i}$
where the sum runs over the set $T\subseteq I$ of $i$ such that
$\epsilon_{i}=1$. By character theory of finite groups and Frobenius
reciprocity 
\[
0=\left\langle tr\, r,1\right\rangle =\sum_{i\in I}a_{i}\left\langle \epsilon_{i},1\right\rangle =\sum_{i\in T}a_{i}
\]
where $\left\langle -,-\right\rangle $ denotes the usual inner product
of characters, hence $L(s,r)$ is invertible at $s=1$ as well.

Next if $\chi$ is not of the form $\epsilon|\cdot|^{i\lambda}$,
then it is not of this form when restricted to any open finite index
subgroup. We have $r=\sigma\otimes\chi$ and use Brauer's theorem
to write 
\[
\sigma=\bigoplus_{i\in I}\left({\rm Ind}_{W_{E_{i}}}^{W_{F}}\epsilon_{i}\right)^{\oplus a_{i}}
\]
with notation as before, hence 
\[
r=\bigoplus_{i\in I}\left({\rm Ind}_{W_{E_{i}}}^{W_{F}}(\epsilon_{i}\otimes\chi|_{W_{E_{i}}})\right)^{\oplus a_{i}}
\]
and $L(s,r)=\prod_{i}L(s,\epsilon_{i}\otimes\chi|_{W_{E_{i}}})^{a_{i}}$
and none of the $\epsilon_{i}\otimes\chi|_{W_{E_{i}}}$ are of the
form $|\cdot|^{i\lambda}$ which allows us to deduce the proposition
in this case.

Finally when $r$ is not primitive, write $r={\rm Ind}\, r^{\prime}$
with $r^{\prime}$ primitive and then $L(s,r)=L(s,r^{\prime})$ (note
that inducing $|\cdot|^{i\lambda}$ never produces an irreducible
representation).
\end{proof}

Next, define $W_{F}^{1}=Ker\,|\cdot|$ and $W_{F}^{ab,1}=Ker(|\cdot|\,:\, W_{F}^{ab}\rightarrow\mathbb{R}_{>0})$.

\begin{lem}
\label{lem: equivalences}Let $r\,:\, W_{F}\rightarrow{\rm GL}_{n}(\mathbb{C})$
be a unitary Weil parameter. Let $E/F$ be a finite Galois extension
such that $r$ factors through $W_{E/F}$. Then the image of $r$
is compact, hence closed in ${\rm GL}_{n}(\mathbb{C})$, and the following
are equivalent:

1) $r(W_{E}^{ab,1})=r(W_{E}^{ab})$.

2) $r(W_{F}^{1})=r(W_{F})$.

3) Write $r|_{W_{E}}=\chi_{1}\oplus...\oplus\chi_{n}$. Then for any
$a_{1},...,a_{n}\in\mathbb{Z}$, $\chi_{1}^{a_{1}}...\chi_{n}^{a_{n}}\neq|\cdot|^{i\lambda}$
for all $\lambda\in\mathbb{R}^{\times}$ .\end{lem}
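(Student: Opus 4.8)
The plan is to transport both conditions (1) and (2) to the abelian Weil parameter $r|_{W_{E}}$ and then settle matters by a little harmonic analysis on the idele class group $C_{E}$. Up to conjugation $r$ takes values in ${\rm U}(n)$, so its image always has compact closure; I will show below that when the equivalent conditions hold the image actually equals $r(W_{F}^{1})$ and so is compact (hence closed). Since $E/F$ is Galois and $r$ factors through $W_{E/F}$, the restriction $r|_{W_{E}}$ is abelian and factors through $W_{E}^{ab}\cong C_{E}$, so $r|_{W_{E}}=\chi_{1}\oplus\dots\oplus\chi_{n}$ for unitary characters $\chi_{j}\,:\,C_{E}\rightarrow{\rm U}(1)$; under this identification $W_{E}^{ab,1}$ becomes the norm-one subgroup $C_{E}^{1}$, which is \emph{compact}, and $|\cdot|$ gives an isomorphism $C_{E}/C_{E}^{1}\cong\mathbb{R}_{>0}$. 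Note also that $W_{E}$ is normal of index $k=[E:F]$ in $W_{F}$, that the norm on $W_{E}$ is onto $\mathbb{R}_{>0}$ (so $W_{E}\cdot W_{F}^{1}=W_{F}$ and $W_{E}\cap W_{F}^{1}=W_{E}^{1}$ has index $k$ in $W_{F}^{1}$), and that the image of $W_{E}^{1}$ in $W_{E}^{ab}$ is exactly $C_{E}^{1}$.

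\textbf{Step 1, (1)$\Leftrightarrow$(2).} I would use the elementary observation that if $H=\ker\nu$ for a surjection $\nu\,:\,G\twoheadrightarrow\mathbb{R}_{>0}$ and $\phi$ is any homomorphism on $G$, then $\phi(H)=\phi(G)$ if and only if $G=(\ker\phi)\cdot H$, if and only if $\nu(\ker\phi)=\mathbb{R}_{>0}$. Applied to $(G,\nu,\phi)=(W_{F},|\cdot|,r)$ this rewrites (2) as $|\cdot|(\ker r)=\mathbb{R}_{>0}$; applied to $(W_{E},|\cdot|,r|_{W_{E}})$, and using $r(W_{E}^{1})=r(W_{E}^{ab,1})$ and $r(W_{E})=r(W_{E}^{ab})$, it rewrites (1) as $|\cdot|(\ker r\cap W_{E})=\mathbb{R}_{>0}$. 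Now $\ker r\cap W_{E}$ has index at most $k$ in $\ker r$, so $|\cdot|(\ker r\cap W_{E})$ is a finite-index subgroup of $|\cdot|(\ker r)$; since $\mathbb{R}_{>0}\cong(\mathbb{R},+)$ is divisible and hence has no proper subgroup of finite index, $|\cdot|(\ker r)=\mathbb{R}_{>0}$ forces $|\cdot|(\ker r\cap W_{E})=\mathbb{R}_{>0}$. The reverse implication is immediate since $\ker r\cap W_{E}\subseteq\ker r$.

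\textbf{Step 2, (1)$\Leftrightarrow$(3).} I would fix a topological splitting $C_{E}\cong C_{E}^{1}\times\mathbb{R}_{>0}$ adapted to $|\cdot|$ and write $\chi_{j}=\chi_{j}^{0}\cdot|\cdot|^{is_{j}}$ with $\chi_{j}^{0}$ a character of the compact group $C_{E}^{1}$ and $s_{j}\in\mathbb{R}$. Set $K=\bigcap_{j}\ker\chi_{j}$ and $H=\{(\chi_{1}^{0}(y),\dots,\chi_{n}^{0}(y)):y\in C_{E}^{1}\}\subseteq{\rm U}(1)^{n}$, which is closed, being the continuous image of the compact group $C_{E}^{1}$. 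Unwinding the definitions, $t\in|\cdot|(K)$ precisely when $(t^{is_{1}},\dots,t^{is_{n}})\in H$, so by the reformulation of (1) from Step 1, (1) holds if and only if the one-parameter subgroup $\Gamma=\{(t^{is_{1}},\dots,t^{is_{n}}):t\in\mathbb{R}_{>0}\}$ lies in $H$. I would then invoke Pontryagin duality in the compact group ${\rm U}(1)^{n}$, with character group $\mathbb{Z}^{n}$: for closed subgroups $\Gamma\subseteq H$ iff ${\rm Ann}(H)\subseteq{\rm Ann}(\Gamma)$, and a direct computation gives ${\rm Ann}(H)=\{(a_{j})\in\mathbb{Z}^{n}:\prod_{j}(\chi_{j}^{0})^{a_{j}}=1\}$ and ${\rm Ann}(\Gamma)=\{(a_{j})\in\mathbb{Z}^{n}:\sum_{j}a_{j}s_{j}=0\}$. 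On the other hand, $\prod_{j}\chi_{j}^{a_{j}}=|\cdot|^{i\lambda}$ as characters of $C_{E}$ exactly when $\prod_{j}(\chi_{j}^{0})^{a_{j}}=1$ on $C_{E}^{1}$ and $\lambda=\sum_{j}a_{j}s_{j}$; hence (3) is precisely the assertion that $\prod_{j}(\chi_{j}^{0})^{a_{j}}=1$ implies $\sum_{j}a_{j}s_{j}=0$, i.e.\ ${\rm Ann}(H)\subseteq{\rm Ann}(\Gamma)$. Chaining the equivalences yields (1)$\Leftrightarrow$(3). This also gives the compactness claim: $r(W_{F}^{1})$ is a finite union of translates of $r(C_{E}^{1})$, hence compact, so under (1)--(3) the image $r(W_{F})=r(W_{F}^{1})$ is compact and closed.

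\textbf{Where the difficulty lies.} Steps 1 and 2 are the whole proof; everything in Step 1 is formal once one spots the ``no proper finite-index subgroups of $\mathbb{R}_{>0}$'' trick. The delicate point is the duality bookkeeping in Step 2: the annihilator of $K$ inside the (non-discrete) dual of $C_{E}$ a priori involves the \emph{closure} of the subgroup generated by the $\chi_{j}$, but the comparison that actually governs (1) takes place in the \emph{discrete} group $\mathbb{Z}^{n}=\widehat{{\rm U}(1)^{n}}$, where ${\rm Ann}(H)$ and ${\rm Ann}(\Gamma)$ are plain subgroups and no closure intervenes — which is exactly why the \emph{finitely many} integral relations $\prod_{j}(\chi_{j}^{0})^{a_{j}}=1$ appearing in (3) suffice. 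Matching the normalization of $|\cdot|$ on the $\mathbb{R}_{>0}$-factor with the exponents $s_{j}$ is the only remaining piece of bookkeeping.
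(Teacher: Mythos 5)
Your proofs of the three equivalences are correct, and both steps take genuinely different routes from the paper's. For $(1)\Leftrightarrow(2)$ the paper introduces the subgroup $W_{E/F}^{1}$ sitting in $1\rightarrow W_{E}^{ab,1}\rightarrow W_{E/F}^{1}\rightarrow \mathrm{Gal}(E/F)\rightarrow 1$, observes that the relevant quotients $r(\cdot)/r(\cdot)$ are connected because they are images of $\mathbb{R}_{>0}$, and plays them off against surjections from $\mathrm{Gal}(E/F)$; your translation of both conditions into ``$|\cdot|(\ker r)=\mathbb{R}_{>0}$'' (resp.\ ``$|\cdot|(\ker r\cap W_{E})=\mathbb{R}_{>0}$'') followed by the observation that $\mathbb{R}_{>0}$ has no proper finite-index subgroups is shorter and more transparent. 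For $(1)\Leftrightarrow(3)$ the paper proves $(3)\Rightarrow(1)$ by contradiction, producing a nontrivial character of $G=r(W_{E}^{ab})$ that kills $r(W_{E}^{ab,1})$ and extending it to $U(1)^{n}$, and does $(1)\Rightarrow(3)$ directly; your reformulation of both sides as the single inclusion ${\rm Ann}(H)\subseteq{\rm Ann}(\Gamma)$ of subgroups of the discrete dual $\mathbb{Z}^{n}$ handles both directions at once and is cleaner. (You should say explicitly that $\Gamma\subseteq H$ is equivalent to ${\rm Ann}(H)\subseteq{\rm Ann}(\Gamma)$ because $H$ is closed and ${\rm Ann}(\Gamma)$ coincides with the annihilator of the \emph{closure} of $\Gamma$, since $\Gamma$ itself is usually not closed.)

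Where you fall short of the statement is the opening assertion that $r(W_{F})$ is compact: you prove only compact closure unconditionally and compactness under (1)--(3). The paper's unconditional argument is that the canonical $\mathbb{R}_{>0}\subseteq C_{E}\subseteq W_{E/F}$ is elementwise Galois-fixed, hence central in $W_{E/F}$, so that $r|_{\mathbb{R}_{>0}}$ acts by a scalar unitary character with image $1$ or $U(1)$; combined with compactness of $r(C_{E}^{1})$ and finiteness of $[W_{E/F}:W_{E}^{ab}]$ this gives compactness of $r(W_{F})$. You need to address this. Note, however, that the scalar conclusion from centrality is Schur's lemma and so presupposes $r$ irreducible (or isotypic); for a reducible $r$ such as $|\cdot|^{i\alpha}\oplus|\cdot|^{i\beta}$ with $\alpha/\beta$ irrational, $r(\mathbb{R}_{>0})$ is a dense one-parameter winding in $U(1)^{2}$ and $r(W_{F})$ is not compact. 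Such an $r$ fails (3), so all the downstream uses (which work under hypothesis (2), as in Theorem~\ref{thm: equidist}) are unaffected; but the unconditional compactness in the lemma as stated appears to need an irreducibility hypothesis that is not there, and the conditional statement you actually prove may well be the defensible one.
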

\begin{proof}
Recall that $W_{E}^{ab}\cong W_{E}^{ab,1}\times\mathbb{R}_{>0}$ and
that $W_{E}^{ab,1}$ is compact. Thus $r(W_{E}^{ab,1})$ is compact
and on $\mathbb{R}_{>0}$, $r$ is a unitary character and is hence
trivial or maps surjectively onto ${\rm U}(1)$. It follows that $r(W_{E}^{ab})$
is compact. Since $W_{E}^{ab}$ has finite index in $W_{E/F}$ it
follows that the image of $r$ is compact.

Let us now show that 1) and 2) are equivalent. To do this, we introduce
$W_{E/F}^{1}$ which is the kernel of the norm map on $W_{E/F}$.
It sits in an exact sequence 
\[
1\rightarrow W_{E}^{ab,1}\rightarrow W_{E/F}^{1}\rightarrow Gal(E/F)\rightarrow1.
\]
Note that $W_{E}^{ab,1}=W_{E/F}^{1}\cap W_{E}^{ab}$ is normal in
$W_{E/F}$, and also that 2) is equivalent to $r(W_{E/F}^{1})=r(W_{E/F})$.
We have $W_{E/F}/W_{E/F}^{1}\cong W_{E}^{ab}/W_{E}^{ab,1}\cong\mathbb{R}_{>0}$
and these surject onto $r(W_{E/F})/r(W_{E/F}^{1})$ and $r(W_{E}^{ab})/r(W_{E}^{ab,1})$,
so they are connected. Thus it suffices to show that if one is finite
then the other is as well. Assume 1). Then 
\[
Gal(E/F)\twoheadrightarrow\frac{r(W_{E/F})}{r(W_{E}^{ab})}=\frac{r(W_{E/F})}{r(W_{E}^{ab,1})}\twoheadrightarrow\frac{r(W_{E/F})}{r(W_{E/F}^{1})}
\]
so 2) holds. Conversely, assume 2). Then 
\[
Gal(E/F)\twoheadrightarrow\frac{r(W_{E/F}^{1})}{r(W_{E}^{ab,1})}=\frac{r(W_{E/F})}{r(W_{E}^{ab,1})}\supseteq\frac{r(W_{E}^{ab})}{r(W_{E}^{ab,1})}
\]
and hence 1) holds. 

To finish the proof of the proposition it suffices to show that 1)
and 3) are equivalent. First assume 1). $r|_{W_{E}}$ takes values
in ${\rm U}(1)^{n}$ and the integers $a_{1},...,a_{n}$ define character
$\psi$ such that $\psi\circ r=\chi_{1}^{a_{1}}...\chi_{n}^{a_{n}}$.
Then $\psi(r(W_{E}^{ab}))=\psi(r(W_{E}^{ab,1}))$ which implies $\psi\circ r\neq|\cdot|^{i\lambda}$
for all $\lambda\in\mathbb{R}^{\times}$ as $|\cdot|^{i\lambda}(W_{E}^{ab})\neq1=|\cdot|^{i\lambda}(W_{E}^{ab,1})$
for $\lambda\in\mathbb{R}^{\times}$. Thus 1) implies 3). Conversely,
assume that 3) holds and let $G=r(W_{E}^{ab})$. As $W_{E}^{ab,1}$
is compact, $r(W_{E}^{ab,1})$ is a closed subgroup of $G$. Assume
for contradiction that $G/r(W_{E}^{ab,1})\neq1$. As $\mathbb{R}_{>0}\twoheadrightarrow G/r(W_{E}^{ab,1})$
we must have $G/r(W_{E}^{ab,1})={\rm U}(1)$ and hence we may find
a non-trivial character $\psi$ on $G$ that is trivial on $r(W_{E}^{ab,1})$.
Thus $\psi\circ r$ is non-trivial but trivial on $W_{E}^{ab,1}$
and hence equal to $|\cdot|^{i\lambda}$ for some $\lambda\in\mathbb{R}^{\times}$.
As $G$ is a closed subgroup of ${\rm U}(1)^{n}$ we may extend $\psi$
to a character on ${\rm U}(1)^{n}$ and hence there are integers $a_{1},...,a_{n}$
such that $\chi_{1}^{a_{1}}...\chi_{n}^{a_{n}}=|\cdot|^{i\lambda}$,
a contradiction. This concludes the proof. 
\end{proof}

We may now prove an equidistribution result for Weil parameters.

\begin{thm}
\label{thm: equidist}Let $r\,:\, W_{F}\rightarrow{\rm GL}_{n}(\mathbb{C})$
be a unitary Weil parameter and put $G=r(W_{F})$ (which is compact
by Lemma \ref{lem: equivalences}). Assume that $G=r(W_{F}^{1})$.
Let $S$ be a finite set of places containing all infinite places
such that $r$ is unramified outside $S$. Then the sequence $(r(Frob_{v}))_{v\not\in S}$
is equidistributed in the set of conjugacy classes of $G$ with respect
to measure induced by the Haar probability measure on $G$.\end{thm}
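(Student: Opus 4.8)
The strategy is the standard one: reduce equidistribution in $\mathrm{Conj}(G)$ to the analytic behaviour (holomorphy and non-vanishing at $s=1$) of the $L$-functions $L^S(s, \eta \circ r)$ for all nontrivial irreducible representations $\eta$ of $G$, via the Weyl/Serre equidistribution criterion. Concretely, I would invoke (the proof technique behind) the theorem of Serre in the Appendix to \S 1 of \cite{Ser} cited above: for a compact group $G$ and a sequence of conjugacy classes, equidistribution with respect to (the pushforward of) Haar measure is equivalent to having, for every nontrivial irreducible $\eta \in \widehat{G}$, the Dirichlet series $\prod_{v \notin S} \det(1 - \eta(r(\mathrm{Frob}_v)) q_v^{-s})^{-1}$ converge for $\mathrm{Re}\, s > 1$ and extend meromorphically to $\mathrm{Re}\, s \geq 1$ with no zero or pole except possibly at $s = 1$, \emph{and} be holomorphic and non-vanishing at $s = 1$.

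First I would set up the translation: since $G = r(W_F)$ is compact and $\eta \colon G \to \mathrm{GL}_{d_\eta}(\mathbb{C})$ is a continuous (hence, after conjugation, unitary) irreducible representation, the composite $\eta \circ r \colon W_F \to \mathrm{GL}_{d_\eta}(\mathbb{C})$ is a unitary Weil parameter (not necessarily irreducible, but a finite direct sum of irreducibles). The associated Artin $L$-function $L(s, \eta \circ r)$ is, up to finitely many Euler factors, exactly the Dirichlet series above, so by Proposition \ref{prop: mero cont} each irreducible constituent — and hence $L(s, \eta \circ r)$ itself — has meromorphic continuation to $\mathbb{C}$ and is invertible on $\mathrm{Re}\, s \geq 1$ \emph{unless} some constituent is of the form $|\cdot|^{i\lambda}$ with $\lambda \in \mathbb{R}$. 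This reduces the theorem to showing that no irreducible constituent of $\eta \circ r$ can be $|\cdot|^{i\lambda}$ for $\lambda \in \mathbb{R}$: the case $\lambda \neq 0$ being forbidden by the hypothesis $G = r(W_F^1)$, and the case $\lambda = 0$ (the trivial constituent) being forbidden because $\eta$ is a nontrivial irreducible representation of $G = r(W_F)$.

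The main obstacle — really the only non-formal point — is the second of these: ruling out the possibility that $\eta \circ r$ contains a one-dimensional constituent $|\cdot|^{i\lambda}$ with $\lambda \neq 0$. This is where Lemma \ref{lem: equivalences} is used. If $\psi$ is such a constituent, then $\psi$ is a character of $W_F$ appearing in $\eta \circ r$; restricting to a finite Galois $E/F$ through which $r$ factors and writing $r|_{W_E} = \chi_1 \oplus \cdots \oplus \chi_n$, the representation $(\eta \circ r)|_{W_E}$ decomposes into characters each of which is a monomial $\chi_1^{a_1} \cdots \chi_n^{a_n}$ in the $\chi_i$ (since $\eta$, being a representation of the matrix group $\mathrm{GL}_n$, restricted to the diagonal torus is a sum of such monomials, and the image $r(W_E^{ab})$ sits in a torus). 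Hence $\psi|_{W_E} = \chi_1^{a_1} \cdots \chi_n^{a_n}$ for suitable integers $a_i$, and $\psi|_{W_E} = |\cdot|^{i\lambda}|_{W_E}$ which is still of the form $|\cdot|^{i\lambda'}$ with $\lambda' \neq 0$ (the norm map $W_E \to \mathbb{R}_{>0}$ being surjective). But this is precisely the situation excluded by condition 3) of Lemma \ref{lem: equivalences}, which is equivalent to condition 2), namely $r(W_F^1) = r(W_F)$ — exactly our hypothesis. This contradiction completes the verification of the hypotheses of Serre's criterion, and the equidistribution of $(r(\mathrm{Frob}_v))_{v \notin S}$ in $\mathrm{Conj}(G)$ follows.

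One small technical point I would address carefully: when passing between $L(s, \eta \circ r)$ and the formal Euler product over $v \notin S$, I should note that the two differ only by the finitely many bad Euler factors at $v \in S$ (plus, for Artin $L$-functions, the Archimedean factors, which are irrelevant for holomorphy and non-vanishing in $\mathrm{Re}\, s \geq 1$ since $\eta \circ r$ is unitary so its local $L$-factors at infinity are nonzero and finite on this region), so the analytic input from Proposition \ref{prop: mero cont} transfers without loss. I would also remark that convergence of the Euler product for $\mathrm{Re}\, s > 1$ is automatic since $r$ is unitary, so $|\eta(r(\mathrm{Frob}_v))|$-eigenvalues all have absolute value $1$.
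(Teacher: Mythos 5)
Your proof is correct and follows the same overall strategy (Serre's criterion plus Proposition \ref{prop: mero cont}), but you miss the one observation that makes the paper's proof short: since $r$ surjects onto $G$, the composite $\eta\circ r$ is automatically \emph{irreducible} as a representation of $W_F$ --- any $W_F$-invariant subspace is a $G$-invariant subspace for $\eta$, and $\eta$ is irreducible. Proposition \ref{prop: mero cont} therefore applies directly to $\eta\circ r$ itself, and the only thing to rule out is $\eta\circ r=|\cdot|^{i\lambda}$: the hypothesis $G=r(W_F^{1})$ gives $(\eta\circ r)(W_F^{1})=\eta(G)=(\eta\circ r)(W_F)$, which immediately excludes $\lambda\neq 0$ (since $|\cdot|^{i\lambda}$ is trivial on $W_F^1$ but not on $W_F$), while nontriviality of $\eta$ and surjectivity of $r$ exclude $\lambda=0$. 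By instead treating $\eta\circ r$ as possibly reducible, you argue constituent-by-constituent and detour through condition 3) of Lemma \ref{lem: equivalences} (the monomial characterization) rather than using condition 2) directly; this is a valid but longer path. One slip in that detour: you say $\eta$ is ``a representation of the matrix group ${\rm GL}_n$,'' which is not true --- $\eta$ is defined only on $G$ and there is no reason it should extend. The correct justification for the monomial form is that the restriction of $\eta$ to the compact abelian group $r(W_E^{ab})$, which lies (after conjugation) inside the diagonal torus ${\rm U}(1)^n$, decomposes as a sum of characters of $r(W_E^{ab})$, and each such character extends to a character of ${\rm U}(1)^n$ by Pontryagin duality (surjectivity of restriction of characters from a compact abelian group to a closed subgroup), hence has the form $(z_1,\ldots,z_n)\mapsto z_1^{a_1}\cdots z_n^{a_n}$.
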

\begin{proof}
As indicated, it suffices to show that for every irreducible non-trivial
representation $\rho\,:\, G\rightarrow{\rm GL}_{m}(\mathbb{C})$,
the Artin $L$-function $L(s,\rho\circ r)$ of the (necessarily irreducible)
unitary Weil parameter $\rho\circ r$ has meromorphic continuation
and is invertible on $Re\, s\geq1$. By Proposition \ref{prop: mero cont}
it suffices to show that $\rho\circ r\neq|\cdot|^{i\lambda}$ for
any $\lambda\in\mathbb{R}$. As $r$ surjects onto $G$ $\rho\circ r$
cannot be trivial, and since $(\rho\circ r)(W_{F}^{1})=(\rho\circ r)(W_{F})$,
$\rho\circ r\neq|\cdot|^{i\lambda}$ for all $\lambda\in\mathbb{R}^{\times}$.
\end{proof}

\subsection{Applications to Galois representations and abelian varieties}

In this section we apply Theorem \ref{thm: equidist} to get equidistribution
laws for abelian varieties that become isogenous over some field extension
to a product of abelian varieties with CM. First we give results purely
in terms of Galois representations.
\begin{lem}\label{lem: image}
Let $\rho\,:\, G_{F}\rightarrow{\rm GL}_{n}(\overline{\mathbb{Q}}_{\ell})$
be a semisimple potentially abelian geometric $\ell$-adic representation
that is pure of weight $w\in\mathbb{Z}$, and let $r$ be the associated
Weil parameter. Then

1) $r\otimes|\cdot|^{-w/2}$ is unitary.

2) $(r\otimes|\cdot|^{-w/2})(W_{F})=(r\otimes|\cdot|^{-w/2})(W_{F}^{1})$.\end{lem}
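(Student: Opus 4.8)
The plan is to reduce both parts to statements about the associated Weil parameter $r$ and then invoke the earlier results of this section, principally Corollary \ref{cor: correspondence} and Lemma \ref{lem: equivalences}. First I would pass to an open finite index subgroup over which things become abelian: since $\rho$ is potentially abelian, there is a finite extension $E/F$ with $\rho|_{G_E}$ a direct sum of geometric $\ell$-adic characters $\epsilon_1,\dots,\epsilon_n$, and purity of weight $w$ for $\rho$ forces each $\epsilon_i$ to be pure of weight $w$ as well (the Frobenius eigenvalues at good places all have the correct archimedean absolute value). Under the correspondence of Corollary \ref{cor: correspondence}, $r|_{W_E}$ is then $\chi_1\oplus\cdots\oplus\chi_n$ where $\chi_i \leftrightarrow \epsilon_i$, and by part (1) of Theorem \ref{thm: characters} each $\chi_i$ has all conjugates of $\chi_i(\mathrm{Frob}_v)$ of absolute value $q_v^{-w/2}$.

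For part 1), the point is that $r\otimes|\cdot|^{-w/2}$, restricted to $W_E$, is a direct sum of characters $\chi_i\cdot|\cdot|^{-w/2}$ each of which takes values in the unit circle: this is immediate from the weight normalization, since on $W_E^{ab}\cong W_E^{ab,1}\times\mathbb{R}_{>0}$ the character $\chi_i$ has absolute value given by a power of the norm determined by $w$ (one checks the exponent on $\mathbb{R}_{>0}$ is exactly $w/2$ using the Frobenius computation above, or the archimedean condition (2) of Theorem \ref{thm: characters}). Hence $(r\otimes|\cdot|^{-w/2})(W_E)\subseteq \mathrm{U}(1)^n$ is a bounded, and therefore (by continuity and the compactness of $W_E^{ab,1}$, as in the first paragraph of the proof of Lemma \ref{lem: equivalences}) relatively compact subgroup; since $[W_F:W_E]<\infty$ and $r$ is semisimple, $r\otimes|\cdot|^{-w/2}$ then has bounded image, so it is conjugate into $\mathrm{U}(n)$, i.e. it is unitary.

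For part 2), set $r' = r\otimes|\cdot|^{-w/2}$, which is unitary by part 1). I would apply the equivalence of conditions (2) and (3) in Lemma \ref{lem: equivalences}: it suffices to show that no monomial $\chi_1'^{a_1}\cdots\chi_n'^{a_n}$ (where $\chi_i' = \chi_i|\cdot|^{-w/2}$ is the restriction of $r'$ to $W_E$) equals $|\cdot|^{i\lambda}$ for $\lambda\in\mathbb{R}^\times$. Each $\chi_i'$ is an algebraic character of $W_E$ of weight $0$, hence by Theorem \ref{thm: characters} corresponds to a geometric $\ell$-adic character $\epsilon_i' = \epsilon_i\cdot\chi_\ell^{w/2}$ (interpreting the half-Tate-twist appropriately after passing to a field where $\chi_\ell$ has a square root, or just noting $2a_i$ will appear) of $G_E$, and a nontrivial monomial in the $\chi_i'$ corresponds to a nontrivial algebraic $\ell$-adic character $\prod (\epsilon_i')^{a_i}$. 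But $|\cdot|^{i\lambda}$ for $\lambda\ne 0$ is a nontrivial character of $W_E^{ab}$ that is trivial on the compact group $W_E^{ab,1}$ with infinite image in $\mathbb{R}_{>0}$; under the correspondence of Theorem \ref{thm: characters} such a character would have to come from a geometric $\ell$-adic character, yet its restriction to inertia at places dividing $\ell$ forces it to be Hodge--Tate of weight $0$ with values having archimedean absolute value $1$ at all good places, and the only such characters of $W_E$ have finite image --- so $|\cdot|^{i\lambda}$ is not algebraic for $\lambda\ne 0$, giving the required contradiction. This last point --- that an algebraic (equivalently geometric, via Theorem \ref{thm: characters}) character of weight $0$ cannot be $|\cdot|^{i\lambda}$ with $\lambda\ne 0$ --- is the crux of the argument; alternatively one sidesteps it by observing directly that $\epsilon_i$ pure of weight $w$ means the $\chi_i$ generate, modulo the norm to the power $w/2$, a group of characters that is bounded and \emph{discrete-mod-center} enough that Lemma \ref{lem: equivalences}(3) applies verbatim to $r'$.

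The step I expect to be the main obstacle is the clean verification of part 2): one must argue that twisting by $|\cdot|^{-w/2}$ exactly kills the ``$\mathbb{R}_{>0}$-component'' of each $\chi_i$ and leaves behind characters that cannot conspire (via an integer monomial) to reproduce a nontrivial purely-imaginary power of the norm. Concretely this is the assertion that algebraicity (equivalently, being geometric) is incompatible with being $|\cdot|^{i\lambda}$, $\lambda\ne 0$ --- a fact implicit in Theorem \ref{thm: characters} but which I would want to spell out, since everything else (compactness, unitarity after twist) is a routine consequence of purity plus the results already established above.
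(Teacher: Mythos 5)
Your proposal takes essentially the paper's approach in both parts. Part 1) is fine: the paper reduces directly to a single character and computes the unique $\mu$ with $r\otimes|\cdot|^{-\mu/2}$ unitary via Frobenius eigenvalues, which is the same substance as your $\mathrm{U}(1)^n$ boundedness argument. For part 2) you correctly identify the relevant tool (the equivalence $(2)\Leftrightarrow(3)$ of Lemma \ref{lem: equivalences}) and the crux (that $|\cdot|^{i\lambda}$, $\lambda\in\mathbb{R}^{\times}$, is not algebraic), but your execution has a wrinkle the paper avoids: you twist each $\chi_i$ by $|\cdot|^{-w/2}$ and then try to treat $\chi_i'=\chi_i|\cdot|^{-w/2}$ as an algebraic character corresponding to a geometric $\ell$-adic character $\epsilon_i\chi_\ell^{w/2}$, which literally makes no sense for $w$ odd --- this is exactly what forces your hedging about half-Tate twists and adjoining a square root of $\chi_\ell$. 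The paper sidesteps this entirely by not twisting the $\chi_i$ but instead moving the norm factor to the other side: the claim $\prod(\chi_i|\cdot|^{-w/2})^{a_i}\neq|\cdot|^{i\lambda}$ is equivalent to $\chi_1^{a_1}\cdots\chi_n^{a_n}\neq|\cdot|^{(\sum a_i)w/2+i\lambda}$, and now the left side is algebraic outright (a product of algebraic characters of $W_E$), while the right side has archimedean exponent $(\sum a_i)w/2+i\lambda$, an integer only when $\lambda=0$. Your reduction to ``$|\cdot|^{i\lambda}$ is not algebraic'' is the same key point, but you derive it via Hodge--Tate weights at $\ell$, where a direct glance at the archimedean condition in Theorem \ref{thm: characters}(2) is both simpler and what the paper intends.
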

\begin{proof}
1) First note that if $E/F$ is any finite extension, then $\rho$
is pure of weight $w$ if and only $\rho|_{G_{E}}$ is pure of weight
$w$, and $r\otimes|\cdot|^{-w/2}$ is unitary if and only if $r|_{W_{E}}\otimes|\cdot|^{-w/2}$
is unitary (as being unitary is equivalent to compactness of the image,
by Lemma \ref{lem: equivalences} and the unitary trick). Thus we are reduced to the case
when $\rho$ is a sum of characters, which directly reduces to the
case of a single character. Then as is well known there exists a unique
$\mu\in\mathbb{R}$ such that $r\otimes|\cdot|^{-\mu/2}$ is unitary.
For $v\nmid\ell\infty$ such that $r$ is unramified, we must then
have $|\iota(\rho(Frob_{v}))|=|r(Frob_{v})|=q_{v}^{-\mu/2}$ (where
the absolute value denotes the complex absolute value) and hence $\mu=w$
as desired.

2) By Lemma \ref{lem: equivalences} we are again reduced to the case
when $\rho=\epsilon_{1}\oplus...\oplus\epsilon_{n}$ is a sum of characters
and if we write $r=\chi_{1}\oplus...\oplus\chi_{n}$, we want to show
that for any $a_{1},...,a_{n}\in\mathbb{Z}$, $(\chi_{1}|\cdot|^{-w/2})^{a_{1}}...(\chi_{n}|\cdot|^{-w/2})^{a_{n}}\neq|\cdot|^{i\lambda}$
for all $\lambda\in\mathbb{R}^{\times}$, i.e. that $\chi_{1}^{a_{1}}...\chi_{n}^{a_{n}}\neq|\cdot|^{(\sum a_{i})(w/2)+i\lambda}$.
Since each $\chi_{i}$ is algebraic, we see that the left hand side
is algebraic, but the right hand side is only algebraic if $\lambda=0$. \end{proof}

\begin{cor}
Let $\rho\,:\, G_{F}\rightarrow{\rm GL}_{n}(\overline{\mathbb{Q}}_{\ell})$
be a semisimple potentially abelian geometric $\ell$-adic representation
that is pure of weight $w\in\mathbb{Z}$, and let $S$ be a finite
set of places containing all infinite places and places above $\ell$
such that $\rho$ is unramified outside $S$. Let $r$ be the Weil parameter associated with $\rho$. Then the elements $r(Frob_{v})q_{v}^{w/2}$
all lie in some compact group $G\subseteq{\rm GL}_{n}(\mathbb{C})$
and are equidistributed in the space of conjugacy classes of $G$
(with respect to normalized Haar measure on $G$).\end{cor}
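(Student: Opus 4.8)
The plan is to deduce this corollary directly from Theorem~\ref{thm: equidist} together with Lemma~\ref{lem: image}. First I would set $r'=r\otimes|\cdot|^{-w/2}$, the twist of the Weil parameter associated with $\rho$. By part~1) of Lemma~\ref{lem: image}, $r'$ is a unitary Weil parameter, so by Lemma~\ref{lem: equivalences} its image $G=r'(W_F)$ is compact and closed in ${\rm GL}_n(\mathbb{C})$. By part~2) of Lemma~\ref{lem: image}, $r'(W_F)=r'(W_F^1)$, which is exactly the hypothesis needed to apply Theorem~\ref{thm: equidist}.

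Next I would address the relationship between the places $S$ where $\rho$ is unramified and the places where $r$ (equivalently $r'$, since $|\cdot|^{-w/2}$ is everywhere unramified at finite places) is unramified. By the defining property of the correspondence in Corollary~\ref{cor: correspondence}, for $v\nmid\ell\infty$, $r|_{W_{F_v}}$ is unramified if and only if $\rho|_{G_{F_v}}$ is unramified. Hence if $\rho$ is unramified outside $S$ (which contains the infinite places and the places above $\ell$), then $r$ and therefore $r'$ are unramified outside $S$ as well. So Theorem~\ref{thm: equidist} applies with this same $S$ and tells us that $(r'(Frob_v))_{v\notin S}$ is equidistributed in $Conj(G)$ with respect to the measure induced by Haar measure on $G$.

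Finally I would unwind the twist: for $v\notin S$ we have $r'(Frob_v)=r(Frob_v)\cdot|\cdot|^{-w/2}(Frob_v)$, and since local class field theory is normalized so that uniformizers go to arithmetic Frobenii and $|\cdot|$ is the norm character, $|\cdot|(Frob_v)=q_v^{-1}$ (the inverse of the residue field size), whence $|\cdot|^{-w/2}(Frob_v)=q_v^{w/2}$. Therefore $r'(Frob_v)=r(Frob_v)q_v^{w/2}$, and these elements lie in the compact group $G$ and are equidistributed in $Conj(G)$, which is the assertion. The only point requiring a little care is the normalization convention for $|\cdot|$ — one must check the exponent and sign of $q_v$ match the conventions fixed in \S\ref{sec: description} so that the factor comes out as $q_v^{w/2}$ rather than $q_v^{-w/2}$; this is bookkeeping rather than a genuine obstacle, and there is no substantive difficulty in the proof beyond correctly invoking the three preceding results.
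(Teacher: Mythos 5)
Your proof is correct and takes essentially the same approach as the paper, which simply applies Theorem~\ref{thm: equidist} to $r\otimes|\cdot|^{-w/2}$ after invoking Lemma~\ref{lem: image}. The extra bookkeeping you supply — matching the sets of unramified places via Corollary~\ref{cor: correspondence} and verifying $|\cdot|^{-w/2}(Frob_v)=q_v^{w/2}$ under the paper's normalization — is accurate and just makes explicit what the paper leaves implicit.
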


\begin{proof} By the Lemma we may apply Theorem \ref{thm: equidist} to $r\otimes|\cdot|^{-w/2}$.\end{proof}

Applying this to the Tate module of an  abelian variety that becomes
isogenous to a product of abelian varieties with CM over a finite
extension of $F$, we get an equidistribution result. From the definition of $G$ and the $r(Frob_{v})q^{w/2}$ it is not clear that this data agrees with $ST_{A}$ and the conjugacy classes defined in \S \ref{sec: ST-group}. Next, we will prove the Sato-Tate conjecture in the form of \cite{FKRS} for products (up to isogeny) of CM abelian varieties.

\begin{prop}\label{thm: D and F}
\label{cor: D or F}Let $A/F$ be an abelian variety that becomes
isogenous to a product of abelian varieties with CM over a finite
extension of $F$. Then the Sato-Tate conjecture holds for $A$. In particular, the Sato-Tate conjecture holds for abelian surfaces of type $\textbf{D}$ or $\textbf{F}$.  \end{prop}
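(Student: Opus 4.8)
The plan is to reduce the Sato-Tate conjecture in the sense of \S\ref{sec: ST-group} to the equidistribution statement already obtained in the Corollary, by showing that the ``Weil group'' data $(G, (r(Frob_v)q_v^{w/2})_{v\notin S})$ and the ``Galois group'' data $(ST_A, (s(v))_{v\notin S})$ actually coincide (up to the natural identifications). Here $w=-1$ for the Tate module, so the normalization $r(Frob_v)q_v^{w/2} = q_v^{-1/2}r(Frob_v)$ matches the $g_v' = q_v^{-1/2}g_v$ of \S\ref{sec: ST-group}. Since $A$ becomes isogenous to a product of CM abelian varieties over a finite extension, $V_\ell(A)$ is potentially abelian and geometric of weight $-1$, so Corollary \ref{cor: correspondence} attaches to $\rho_{A,\ell}$ an algebraic Weil parameter $r$ with $tr(r(Frob_v)) = \iota(tr(\rho_{A,\ell}(Frob_v)))$ for all $v\nmid \ell\infty$, and by Lemma \ref{lem: image} the twist $r\otimes|\cdot|^{1/2}$ is unitary with $(r\otimes|\cdot|^{1/2})(W_F) = (r\otimes|\cdot|^{1/2})(W_F^1)$; thus Theorem \ref{thm: equidist} applies to it.

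The key steps, in order, are as follows. First I would record that $\rho_{A,\ell}$ is semisimple (Tate/Faltings, already noted in the Remark) so it equals the semisimple Galois representation attached to $r\otimes|\cdot|^{-1/2}\otimes\text{(untwist)}$ under Corollary \ref{cor: correspondence} --- more precisely, $r$ and $\rho_{A,\ell}$ correspond under the bijection of part 2 of that Corollary. Second, I would identify the image group: let $\overline{G} = (r\otimes|\cdot|^{1/2})(W_F)$, which is the compact group produced by Theorem \ref{thm: equidist}. I need to compare this with $G^1 = G_\ell^{1,Zar}\otimes_{\mathbb{Q}_\ell,\iota}\mathbb{C}$ and its maximal compact $ST_A$. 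The point is that for a potentially abelian, hence potentially (and in fact, after the algebraic twist, genuinely) ``Weil-type'' representation, the Zariski closure of the image of $G_F^1$ is reductive with the same $\mathbb{C}$-points as the Zariski closure of the image of $W_F^1$ under $r\otimes|\cdot|^{1/2}$, and a unitary compact group is its own maximal compact inside its Zariski closure. So I would argue that $\overline{G}$ is a maximal compact subgroup of $G^1$ contained in ${\rm USp}(2g)$ (using that $r\otimes|\cdot|^{1/2}$ preserves the symplectic form coming from the Weil pairing, after normalization), hence conjugate to $ST_A$ by definition. Third, I would match the conjugacy classes: the semisimple part of $g_v' = q_v^{-1/2}\rho_{A,\ell}(Frob_v)$ has the same characteristic polynomial as $(r\otimes|\cdot|^{1/2})(Frob_v)$ --- indeed they have equal trace by the defining property of the correspondence, and the same holds for all symmetric/exterior powers since those are again governed by the correspondence applied to $\wedge^k$, $\mathrm{Sym}^k$ of the representation --- so after conjugating $\overline{G}$ into $ST_A$ the class of $(r\otimes|\cdot|^{1/2})(Frob_v)$ is exactly $s(v)$. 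Then the Corollary's equidistribution of $(r(Frob_v)q_v^{-1/2})_{v\notin S}$ in $Conj(\overline{G})$ transports to equidistribution of $(s(v))$ in $Conj(ST_A)$. Finally, for the last sentence, I would invoke the classification recalled in \S\ref{sec: description}: types $\textbf{D}$ and $\textbf{F}$ are precisely those for which $A_L$ is isogenous to a product of CM elliptic curves, resp. to the square of a CM elliptic curve, resp. $A$ is simple with CM by a quartic field --- in all cases $A$ becomes isogenous over the finite extension $L$ to a product of abelian varieties with CM --- so the general statement applies.

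The main obstacle I expect is the second step: carefully identifying the compact group $\overline{G}$ produced from the Weil parameter with the Sato-Tate group $ST_A$ defined via the $\ell$-adic monodromy group $G_\ell^{1,Zar}$. One has to be careful that the Zariski closure is taken inside ${\rm GSp}_{2g}$ and that passing from the $\ell$-adic to the complex picture via $\iota$ and from $G_F$ to $W_F$ is compatible with taking images, Zariski closures, and maximal compacts. The cleanest route is probably to observe that both $ST_A$ and $\overline{G}$ have the same ``character theory'' --- for every algebraic representation $\xi$ of ${\rm GSp}_{2g}$, the partial $L$-functions $L^S(s, \xi\circ(\rho_{A,\ell}\otimes|\cdot|^{1/2})$-normalized$)$ and $L^S(s,\xi\circ(r\otimes|\cdot|^{1/2}))$ agree up to finitely many Euler factors, because the Frobenius traces agree --- and then use that a compact subgroup of ${\rm USp}(2g)$ is determined up to conjugacy by the restriction to it of the algebraic characters of ${\rm GSp}_{2g}$ (equivalently, the group is the common stabilizer of its invariant tensors), together with the fact that $ST_A$ is defined to be \emph{a} maximal compact in $G^1\subseteq {\rm USp}(2g)$ and $\overline{G}$ is visibly such a group. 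A secondary, more bookkeeping-level obstacle is being precise about weights and the sign conventions for $|\cdot|^{\pm 1/2}$ (the paper uses the Tate module, weight $-1$, rather than its dual as in the introduction), but this is routine once set up consistently.
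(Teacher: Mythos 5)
Your overall setup (weight $-1$, the normalization by $|\cdot|^{1/2}$, the correspondence of Corollary~\ref{cor: correspondence} applied to $\rho_{A,\ell}$, the reduction of types $\textbf{D}$ and $\textbf{F}$ to the general statement) is correct, but your central reduction is exactly the step that the paper does \emph{not} carry out, and for good reason: the Remark immediately following this Proposition states explicitly that the author does not know how to prove that the compact group $G$ coming from the Weil parameter coincides with $ST_A$, nor that $r(Frob_v)q_v^{1/2}=s(v)$. Your proposed fix --- that a compact subgroup of ${\rm USp}(2g)$ is determined up to conjugacy by the restrictions of algebraic characters of ${\rm GSp}_{2g}$, i.e.\ by the joint distribution of eigenvalues of its elements --- is not a theorem. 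Agreement of all the $L$-functions $L^{S}(s,\xi\circ\cdots)$ only tells you that the two Frobenius sequences land in the same conjugacy classes of ${\rm GSp}_{2g}(\mathbb{C})$; it does not identify the two compact subgroups up to conjugacy (this is a ``dimension data''/Larsen--Pink type issue, and it fails in general for non-connected groups). Even granting conjugacy of the two ambient groups, your third step is also not automatic: two elements of $ST_A$ with the same characteristic polynomial need not be $ST_A$-conjugate when $ST_A\subsetneq{\rm USp}(2g)$, so equality of characteristic polynomials of $g_v'$ and of $(r\otimes|\cdot|^{1/2})(Frob_v)$ does not by itself yield $s(v)$ as the image class.

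The paper circumvents this entirely by never trying to identify $\overline{G}$ with $ST_A$. It instead verifies the hypothesis of Serre's equidistribution criterion (the Theorem in \S\ref{sec: strategy}) \emph{directly} for $ST_A$: take any nontrivial irreducible representation $R$ of $ST_A$, extend it (via the remarks at the end of \S\ref{sec: generalities}, choosing a compatible central weight $w$) to an algebraic representation $R(w)$ of $G_{\ell}^{Zar}$, so that $L^{S}(s,R,A)=L^{S}(s+w/2,R(w)\circ\rho_{A,\ell})$. The composite $R(w)\circ\rho_{A,\ell}$ is a pure, semisimple, potentially abelian geometric Galois representation (irreducible since $\rho_{A,\ell}$ has Zariski-dense image in $G_{\ell}^{Zar}$ and $R(w)|_{G^{1}_{\ell,Zar}}=R$ is irreducible), so by Corollary~\ref{cor: correspondence} and Lemma~\ref{lem: image} it corresponds to an algebraic Weil parameter whose unitary normalization is irreducible and not of the form $|\cdot|^{i\lambda}$, whence Proposition~\ref{prop: mero cont} gives invertibility of $L^{S}(s,R,A)$ on $\mathrm{Re}\,s\geq1$. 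This establishes the Sato-Tate conjecture in the form of \S\ref{sec: ST-group} without ever needing the Weil-side group to equal $ST_A$. Your proof would become correct (and in fact prove something strictly stronger) if you could actually establish the identification you posit, but as it stands that identification is an open problem flagged in the paper, and the route you suggest for it does not close the gap.
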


\begin{proof}
We follow the general strategy outline in \S \ref{sec: strategy}. Let $R$ be an irreducible representation of $ST_{A}$. By the remarks in the last paragraphs of section \S \ref{sec: generalities} we may extend $R$ to an algebraic representation of $G_{\ell}^{Zar}$, which we also denote by $R$, and $R\circ \rho_{A,\ell}$ is a pure, semisimple, potentially abelian geometric Galois representation. By Corollary \ref{cor: correspondence}, Theorem \ref{thm: equidist} and Lemma \ref{lem: image} we may now conclude.
\end{proof}

\begin{rem} Let $A/F$ be an abelian variety satisfying the hypotheses of Proposition \ref{thm: D and F}, with $\ell$-adic Tate module $\rho=\rho_{A,\ell}$ and corresponding Weil parameter $r$. Associated with $A$ we have groups $G$ and $ST_{A}$ together with conjugacy classes $(r(Frob_{v})q^{1/2})_{v}$  and $(s(v))_{v}$, and we have proved equidistribution results for both. As elements of ${\rm GL}_{2g}(\mathbb{C})$ $r(Frob_{v})q^{1/2}$ and $s(v)$ have the same characteristic polynomial. It is not clear (at least to the author) that $G=ST_{A}$ and  $r(Frob_{v})q^{1/2}=s(v)$, though it seems a natural guess. Unfortunately we have not been able to prove this equality. The issue (at least for the author of this paper) seems to be the inexplicit nature of the construction of the global Weil group for number fields.
\end{rem}

\section{\label{sec: GL(2)-type}Abelian Surfaces potentially of ${\rm GL}_{2}$-type
over totally real fields}

In this section we will deal with a number of cases where the abelian
surface $A/F$ has a two-dimensional commutative semisimple $\mathbb{Q}$-algebra  $K\subseteq{\rm End}_{\overline{\mathbb{Q}}}(A)^{0}$.
For technical reasons we need to assume that $K\subseteq{\rm End}_{F^{\prime}}(A)^{0}$
for a certain totally real field $F^{\prime}\supseteq F$ (forcing $F$ to be totally real
as well). This is analogous to the restriction to elliptic curves
over totally real fields in our current knowledge of the Sato-Tate
conjecture for non-CM elliptic curves. We will start off by recalling
some generalities from \cite{Rib} on the Tate modules of abelian
varieties of ${\rm GL}_{2}$-type, and then we will prove the Sato-Tate
conjecture case by case for cases $\textbf{B}$, $\textbf{C}$ and
$\textbf{E}$ under the above mentioned hypotheses.

\subsection{\label{sec: generalities}Some generalities}

To simplify terminology, let us make the following non-standard definition (cf.
\cite[\S 2]{Rib} in the case $F=\mathbb{Q}$ for the standard definition).
\begin{defn}\label{gl2def}
Let $A/F$ be an abelian variety. We say that $A$
is of ${\rm GL}_{2}$-type if it is isogenous over $F$ to a product $A_{1}\times ... \times A_{r}$ of simple abelian varieties $A_{i}$ over $F$ and there is a number field $K_{i}\subseteq {\rm End}_{F}(A_{i})^{0}$ of degree ${\rm dim}\, A_{i}$ for each $i=1,...,r$. 
\end{defn}

We will write $K$ for the product $\prod_{i=1}^{r}K_{i}$ viewed as a subalgebra of ${\rm End}_{F}(A)^{0}$. If $A/F$ is an abelian variety, note that if there is a commutative semisimple subalgebra $K\subseteq {\rm End}_{F}(A)^{0}$ of dimension ${\rm dim}\, A$ then $A/F$ is of ${\rm GL}_{2}$-type if each factor of $A$ corresponding to a simple factor of $K$ is simple over $F$. 

\begin{rem}
With the usual definition of ${\rm GL}_{2}$-type (corresponding to requiring $r=1$ in the above definition) one would say that an abelian variety $A$ satisfying the above definition is isogenous to a product of abelian varieties of ${\rm GL}_{2}$-type. We have chosen to relax the standard definition to avoid the cumbersome termonlogy "isogenous to a product of abelian varieties of ${\rm GL}_{2}$-type"; we apologize for this but hope that it will not cause the reader too much confusion.
\end{rem}

Let $A$ be an abelian surface of ${\rm GL}_{2}$-type over a totally
real field $F$ with $K\subseteq{\rm End}_{F}(A)^{0}$ of a two-dimensional commutative semisimple algebra. We have two cases corresponding to whether $A$ is simple and $K$ is a field, or $K=\mathbb{Q} \times \mathbb{Q}$ and $A$ is isogenous to a product of elliptic curves. If $A$ is simple, the following
results are recorded in \cite[\S 3]{Rib} when $F=\mathbb{Q}$,
but they extend with the same proofs to general totally real $F$.
We refer to \cite[\S 5]{BLGGT} for the definition of a
weakly compatible system.

\begin{prop}
\label{prop: compatible}Let $A$ be a simple abelian surface of ${\rm GL}_{2}$-type over a totally real field $F$ and let $K\subseteq{\rm End}_{F}(A)^{0}$
be a quadratic field. Let $\ell$ be a prime number. The $\ell$-adic
(rational) Tate module $\rho_{A,\ell}\,:\, G_{F}\rightarrow{\rm GL}_{4}(\overline{\mathbb{Q}}_{\ell})$
lands inside ${\rm GL}_{2}(K\otimes_{\mathbb{Q}}\overline{\mathbb{Q}}_{\ell})$
and we may decompose it into two two-dimensional pieces using the
two embeddings $K\hookrightarrow\overline{\mathbb{Q}}_{\ell}$. The
two-dimensional pieces fit together into a weakly compatible system
$(\rho_{A,\lambda}\,:\, G_{F}\rightarrow{\rm GL}_{2}(\overline{K}_{\lambda}))_{\lambda}$
where $\lambda$ ranges over the finite places of $K$ (or just embeddings
$K\hookrightarrow\overline{\mathbb{Q}}_{\ell}$ for all primes $\ell$).

In the following we use the association $\ell\leftrightarrow\lambda$
without further comment. The compatible system $(\rho_{A,\lambda})_{\lambda}$
satisfies the following properties (see \cite[Lemmas 3.1 and 3.2]{Rib} for properties 1) and 2)):

1) Assume that $K$ is a real quadratic field. Then $\det\,\rho_{A,\lambda}=\chi_{\ell}$,
the $\ell$-adic cyclotomic character.

2) Assume that $K$ is an imaginary quadratic field. Then $\det\,\rho_{A,\lambda}=\epsilon\chi_{\ell}$,
where $\epsilon\,:\, G_{F}\rightarrow K^{\times}$ is a finite order
character (independent of $\lambda$).

3) In either case $\rho_{A,\lambda}$ is totally odd, regular with
Hodge-Tate weights $0$ and $-1$ (independent of embeddings and choice
of places), and the compatible system is pure of weight $-1$.
\end{prop}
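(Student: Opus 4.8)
The plan is to deduce the proposition from \S 3 of \cite{Rib}, whose arguments for $F=\mathbb{Q}$ use nothing about the base field except that it is a number field; the real content is to check this and to add the handful of lines needed for compatibility with the totally real structure. The structural starting point is that $H_{1}(A(\mathbb{C})^{an},\mathbb{Q})$, being a module over the \emph{field} $K$ of $\mathbb{Q}$-dimension $2g=4=2[K:\mathbb{Q}]$, is automatically free of rank $2$ over $K$; tensoring with $\mathbb{Q}_{\ell}$ and using that $K\subseteq{\rm End}_{F}(A)^{0}$ commutes with the $G_{F}$-action gives that $V_{\ell}(A)$ is free of rank $2$ over $K\otimes_{\mathbb{Q}}\mathbb{Q}_{\ell}\cong\prod_{\lambda\mid\ell}K_{\lambda}$ as a $G_{F}$-module, and this produces the $\rho_{A,\lambda}$. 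To package these as a weakly compatible system in the sense of \S 5 of \cite{BLGGT} I would check that for $v\nmid\ell$ of good reduction the characteristic polynomial of $\rho_{A,\lambda}(Frob_{v})$ has coefficients in $K$, independent of $\lambda$: writing it $T^{2}-a_{v}T+d_{v}$ with $a_{v},d_{v}\in K\otimes\mathbb{Q}_{\ell}$, one has $tr_{\mathbb{Q}_{\ell}}\big(\alpha\cdot Frob_{v}\mid V_{\ell}(A)\big)={\rm Tr}_{K/\mathbb{Q}}(\alpha a_{v})\in\mathbb{Z}$ for every $\alpha\in{\rm End}_{F}(A)\cap K$, and nondegeneracy of the trace form of $K$ forces $a_{v}\in K$; likewise for $d_{v}$. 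Purity of weight $-1$ is the Weil ``Riemann hypothesis'' applied to $H_{1}$, and the bounds on ramification are inherited from $V_{\ell}(A)$.

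For the determinant (parts 1 and 2) I would fix a polarization of $A$ defined over $F$ whose Rosati involution $\dagger$ preserves $K$ (available for abelian varieties of ${\rm GL}_{2}$-type, cf.\ \S 3 of \cite{Rib}). Restricted to the commutative field $K$, $\dagger$ is a positive involution, hence the identity if $K$ is real quadratic and complex conjugation if $K$ is imaginary quadratic. In the real case the $G_{F}$-equivariant perfect alternating Weil pairing $V_{\ell}(A)\times V_{\ell}(A)\to\mathbb{Q}_{\ell}(1)$ then satisfies $\langle\alpha x,y\rangle=\langle x,\alpha y\rangle$ for $\alpha\in K$, so by nondegeneracy of the trace of $K\otimes\mathbb{Q}_{\ell}$ it refines to a perfect alternating $(K\otimes\mathbb{Q}_{\ell})$-bilinear $G_{F}$-equivariant pairing into $(K\otimes\mathbb{Q}_{\ell})(1)$; hence $\det_{K\otimes\mathbb{Q}_{\ell}}\rho_{A,\lambda}=\chi_{\ell}$, which is part 1. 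In the imaginary case the same compatibility reads $\langle\alpha x,y\rangle=\langle x,\bar{\alpha}y\rangle$; after extending scalars to $\overline{\mathbb{Q}}_{\ell}$ this makes the isotypic piece for an embedding $\sigma$ isotropic and perfectly paired with the piece for $\bar{\sigma}$, so $\rho_{A,\bar{\lambda}}\cong\rho_{A,\lambda}^{\vee}\otimes\chi_{\ell}$ and $(\det\rho_{A,\lambda})(\det\rho_{A,\bar{\lambda}})=\chi_{\ell}^{2}$. Now set $\epsilon:=(\det\rho_{A,\lambda})\chi_{\ell}^{-1}$; by part 3 it is a geometric character all of whose Hodge--Tate weights vanish, hence locally algebraic with trivial infinity type, hence of finite order, so its values are roots of unity; since $\epsilon(Frob_{v})=d_{v}/q_{v}\in K$ for all good $v$, $\epsilon$ is a finite-order character valued in $K^{\times}$ and is independent of $\lambda$ (as $d_{v}$ and $q_{v}$ are). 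This is part 2.

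For part 3, $\rho_{A,\lambda}$ is cut out from the pure weight $-1$ module $V_{\ell}(A)$ and so is itself pure of weight $-1$. For the Hodge--Tate weights, the Hodge numbers of $V_{\ell}(A)$ are $(g,g)$, and the $K\otimes\mathbb{C}$-module $H^{0}(A_{\mathbb{C}},\Omega^{1})=\mathrm{Lie}(A_{\mathbb{C}})^{\vee}$ is faithful (a characteristic-zero endomorphism acting as $0$ on $\mathrm{Lie}$ is $0$) and of $\mathbb{C}$-dimension $g=\dim_{\mathbb{C}}(K\otimes\mathbb{C})$, hence free of rank $1$; likewise $H^{1}(A_{\mathbb{C}},\mathcal{O})$. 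Since the $K$-action is defined over $F$, the Hodge filtration on $H^{1}_{dR}$, and so the Hodge--Tate decomposition of $V_{\ell}(A)$, is $K$-equivariant; this says exactly that each embedding $K\hookrightarrow\overline{\mathbb{Q}}_{\ell}$ contributes one $0$ and one $-1$, so every $\rho_{A,\lambda}$ is regular with Hodge--Tate weights $\{0,-1\}$, independently of the choices. For total oddness, fix a real place $v$ and, via $v$ and $\iota$, identify the action of a complex conjugation $c_{v}\in G_{F}$ on $V_{\ell}(A)$ with the action on $H_{1}(A(\mathbb{C})^{an},\mathbb{Q})\otimes\overline{\mathbb{Q}}_{\ell}$ of the $\mathbb{Q}$-linear involution $c_{\infty}$ induced by the real structure. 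Then $c_{\infty}$ is $K$-linear (the $K$-action is defined over $F\subseteq\mathbb{R}$) and anticommutes with the complex structure $J$ on $H_{1}(A(\mathbb{C})^{an},\mathbb{R})$, which is itself $K$-linear (as $K$ acts by holomorphic endomorphisms); since $c_{\infty}^{2}=1=-J^{2}$, $J$ carries the $(+1)$-eigenspace of $c_{\infty}$ isomorphically onto its $(-1)$-eigenspace inside each $K$-isotypic piece, so $c_{v}$ acts on each $\rho_{A,\lambda}$ with eigenvalues $\{+1,-1\}$ and $\det\rho_{A,\lambda}(c_{v})=-1$.

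I expect the genuine work to be bookkeeping rather than a conceptual obstacle: the delicate points are (a) verifying that the characteristic polynomials at good primes really do have $K$-coefficients independent of $\lambda$ and assembling the weakly-compatible-system data exactly in the form demanded by \S 5 of \cite{BLGGT}, and (b) tracking the $K$-equivariance of the Betti--\'etale and de Rham--\'etale comparison isomorphisms closely enough to read off regularity and total oddness. All of this is essentially the content of \S 3 of \cite{Rib}, together with the observation that nothing there used $F=\mathbb{Q}$ (and, as already remarked, nothing there used that $A$ is simple).
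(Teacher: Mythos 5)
Your proof is correct and takes the same route the paper implicitly takes: the paper supplies no proof of this proposition, instead citing Lemmas 3.1 and 3.2 of \cite{Rib} and remarking that the arguments there extend verbatim to totally real $F$ and to non-simple $A$, and what you have done is reconstruct those arguments and verify that this is so. The one place worth tightening is the oddness computation, since $J$ is defined only on $H_{1}(A(\mathbb{C})^{an},\mathbb{R})$ and hence does not literally act on the $\lambda$-isotypic pieces of $H_{1}\otimes\overline{\mathbb{Q}}_{\ell}$; it is cleaner to argue that $J$ and $c_{\infty}$ are both $K$-linear and anticommute, so the two $K$-stable $\pm 1$-eigenspaces of $c_{\infty}$ have equal $\mathbb{R}$-dimension, hence are each free of rank $1$ over $K$, whence $\det_{K}(c_{\infty})=-1$ and therefore $\det\rho_{A,\lambda}(c_{v})=-1$ for every $\lambda$.
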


When $A$ is isogenous over $F$ to a product of elliptic curves the obvious analogue of parts (1) and (3) of the above Proposition holds (one might view the algebra $\mathbb{Q} \times \mathbb{Q}$ as "totally real"). We also record a conjugate self-duality result in the case when
$K$ is imaginary quadratic. 
\begin{prop}
\label{prop: conj selfdual}(\cite[Proposition 3.4]{Rib}) We use the
notation of Proposition \ref{prop: compatible} and assume in addition
that $K$ is imaginary quadratic. Then $\rho_{A,\lambda_{1}}\cong\rho_{A,\lambda_{2}}\otimes\epsilon$,
where $\epsilon$ denotes the character in Proposition \ref{prop: compatible}
2) and the $\lambda_{i}$ are the two distinct embeddings $K\hookrightarrow\overline{\mathbb{Q}}_{\ell}$.
\end{prop}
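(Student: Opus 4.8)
The plan is to exploit the Weil pairing on the Tate module of $A$ together with the compatibility of the Rosati involution with $K$. Fix a polarization of $A$ defined over $F$ (one exists since $A$ is projective over $F$) and let $\dagger$ be the associated Rosati involution on ${\rm End}(A)^{0}$. It is a positive involution preserving ${\rm End}_{F}(A)^{0}$, and the first step is to observe that it restricts on $K$ to complex conjugation $\alpha\mapsto\bar\alpha$: the restriction of a positive involution to $K$ is again positive, and the only positive involution of an imaginary quadratic field is complex conjugation (cf. \cite{Rib} \S 3). The polarization equips $V_{\ell}(A)$ with a non-degenerate alternating pairing $e\colon V_{\ell}(A)\times V_{\ell}(A)\to\overline{\mathbb{Q}}_{\ell}(1)$ that is $G_{F}$-equivariant in the sense $e(gx,gy)=\chi_{\ell}(g)\,e(x,y)$ and satisfies $e(\alpha x,y)=e(x,\alpha^{\dagger}y)=e(x,\bar\alpha y)$ for all $\alpha\in K$.

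Next I would pass to $\overline{\mathbb{Q}}_{\ell}$-coefficients and decompose $V_{\ell}(A)\otimes\overline{\mathbb{Q}}_{\ell}=V_{1}\oplus V_{2}$, where $V_{i}$ is the subspace on which $K$ acts through $\lambda_{i}$, so that $\rho_{A,\lambda_{i}}$ is the action of $G_{F}$ on $V_{i}$ and $\lambda_{2}=\lambda_{1}\circ(\,\cdot\mapsto\bar\cdot\,)$. For $x,y\in V_{1}$ and $\alpha\in K$ one computes $\lambda_{1}(\alpha)\,e(x,y)=e(\alpha x,y)=e(x,\bar\alpha y)=\lambda_{1}(\bar\alpha)\,e(x,y)=\lambda_{2}(\alpha)\,e(x,y)$; since $\lambda_{1}\neq\lambda_{2}$ this forces $e(x,y)=0$, so $V_{1}$ — and symmetrically $V_{2}$ — is isotropic for $e$. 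As $e$ is non-degenerate, its restriction is then a perfect $G_{F}$-equivariant pairing $V_{1}\times V_{2}\to\overline{\mathbb{Q}}_{\ell}(1)$, that is, an isomorphism of $G_{F}$-representations $\rho_{A,\lambda_{1}}\cong\rho_{A,\lambda_{2}}^{\vee}\otimes\chi_{\ell}$.

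It remains to rewrite the right-hand side. For any two-dimensional representation $\rho^{\vee}\cong\rho\otimes(\det\rho)^{-1}$ via the wedge pairing, so
\[
\rho_{A,\lambda_{1}}\cong\rho_{A,\lambda_{2}}^{\vee}\otimes\chi_{\ell}\cong\rho_{A,\lambda_{2}}\otimes(\det\rho_{A,\lambda_{2}})^{-1}\chi_{\ell}.
\]
Taking determinants of the four-dimensional representation, which respects the alternating form $e$ with multiplier $\chi_{\ell}$ (a symplectic similitude on a $4$-dimensional space has determinant the square of its multiplier), gives $\det\rho_{A,\lambda_{1}}\cdot\det\rho_{A,\lambda_{2}}=\det\rho_{A,\ell}=\chi_{\ell}^{2}$; combining this with $\det\rho_{A,\lambda_{1}}=\epsilon\,\chi_{\ell}$ from Proposition \ref{prop: compatible} 2) yields $(\det\rho_{A,\lambda_{2}})^{-1}\chi_{\ell}=\det\rho_{A,\lambda_{1}}\,\chi_{\ell}^{-1}=\epsilon$, whence $\rho_{A,\lambda_{1}}\cong\rho_{A,\lambda_{2}}\otimes\epsilon$ as claimed. (That the two scalar incarnations of $\epsilon$ obtained through $\lambda_{1}$ and $\lambda_{2}$ are mutually inverse — consistent with the above — is automatic, since $\epsilon$ is finite order, hence root-of-unity valued, and complex conjugation inverts roots of unity in $K$.)

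The only step here that is not formal bookkeeping is the input that the Rosati involution restricts to complex conjugation on $K$; this is where the positivity of the Rosati form, equivalently the CM nature of $K$, genuinely enters. Without it the two $\lambda_{i}$-eigenspaces would not be forced to be the isotropic subspaces, and one would only obtain the weaker statement that $\rho_{A,\lambda_{1}}$ is isomorphic to one of $\rho_{A,\lambda_{1}}^{\vee}\otimes\chi_{\ell}$ or $\rho_{A,\lambda_{2}}^{\vee}\otimes\chi_{\ell}$, with no way to decide which.
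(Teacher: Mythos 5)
The paper does not give its own proof here; it simply cites \cite{Rib}, Proposition 3.4. Your argument via the polarization pairing and the Rosati involution is the standard route (and is essentially how Ribet argues), and it is correct in all essentials: the Rosati involution is a positive involution, hence acts on an imaginary quadratic $K$ as complex conjugation; the adjoint property $e(\alpha x, y) = e(x, \bar\alpha y)$ then forces the two $\lambda_i$-eigenspaces $V_1, V_2$ to be isotropic, so the Weil pairing yields a perfect $G_F$-equivariant pairing $V_1 \times V_2 \to \overline{\mathbb{Q}}_\ell(1)$ and hence $\rho_{A,\lambda_1} \cong \rho_{A,\lambda_2}^\vee \otimes \chi_\ell$; and the passage from $\rho^\vee$ to $\rho \otimes (\det\rho)^{-1}$ together with $\det\rho_{A,\lambda_1}\cdot \det\rho_{A,\lambda_2} = \chi_\ell^2$ and $\det\rho_{A,\lambda_1}=\epsilon\chi_\ell$ closes the computation. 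The only thing worth flagging is that your closing parenthetical (that $\lambda_1(\epsilon)=\lambda_2(\epsilon)^{-1}$ because $\epsilon$ is root-of-unity valued and conjugation inverts roots of unity in $K$) is not just a consistency remark but is genuinely equivalent to the determinant identity $\det\rho_{A,\lambda_1}\cdot\det\rho_{A,\lambda_2}=\chi_\ell^2$ that you already established via the symplectic form, so it is better viewed as a cross-check than as an independent step; either derivation suffices and neither is circular. Overall, complete and correct, and a welcome filling-in of a proof the paper leaves to a reference.
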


Finally, let us recall the following from the discussion leading up to the definition of the Sato-Tate group: We have an exact sequence 
\[
1\rightarrow G_{\ell}^{1,Zar} \rightarrow G_{\ell}^{Zar} \rightarrow \mathbb{G}_{m}\rightarrow 1
\]
and $G_{\ell}^{Zar}$ contains the central $\mathbb{G}_{m}$ inside ${\rm GL}_{4}$. Since the similitude character becomes $z\mapsto z^{2}$ when restricted to the central $\mathbb{G}_{m}$ we have an isogeny $G_{\ell}^{1,Zar}\times \mathbb{G}_{m} \rightarrow G_{\ell}^{Zar}$. This gives us a surjection $G^{1}\times \mathbb{C}^{\times} \rightarrow G$ on $\mathbb{C}$-points with kernel $\{1,(-1,-1)\}$. Since $ST_{A}$ is a compact real form of the complex reductive group $G^{1}$ we see that any (complex, finite-dimensional) irreducible representation of $ST_{A}$ may be extended to an irreducible algebraic representation of $G$, hence of $G_{\ell}^{Zar}(\overline{\mathbb{Q}}_{\ell})$, by choosing a compatible character of the central $\mathbb{G}_{m}$.

This has the following important consequence: Let $r$ be an irreducible representation of $ST_{A}$ and let $w\in \mathbb{Z}$ be such that $r(-1)=(-1)^{w}$. Then $z\mapsto z^{w}$ is compatible with $r$ and the data $(r,w)$ defines an irreducible algebraic representation $R(w)$ of $G_{\ell}^{Zar}(\overline{\mathbb{Q}}_{\ell})$. Thus we get a weakly compatible system of Galois representations $R(w)\circ \rho_{A,\ell}$ that are essentially self-dual, totally odd and irreducible (importantly) and satisfies
\[
L^{S}(s,r,A)=L^{S}(s+w/2,R(w)\circ \rho_{A,\ell})
\]
(directly from the definition of the two Euler products).
\subsection{Galois type $\textbf{B}[C_{1}]$}

Recall that we are using the presentation ${\rm USp}(4)=\left\{ A\in{\rm U}(4)\mid A^{t}S^{\prime}A=S^{\prime}\right\} $.
Case $\textbf{B}$ corresponds to $ST_{A}^{0}={\rm SU}(2)\times{\rm SU}(2)$,
which we may embed into ${\rm USp}(4)$ by 
\[
(A,B)\mapsto\left(\begin{array}{cc}
A & 0\\
0 & B
\end{array}\right)
\]
The reference for the group theory is \cite[\S 3.6]{FKRS}.

In the case of Galois type $\textbf{B}[C_{1}]$ the Sato-Tate group
is simply ${\rm SU}(2)\times{\rm SU}(2)$ and its irreducible representations
are $r_{k,l}=Sym^{k}(St)\otimes Sym^{l}(St)$ for nonnegative integers
$k$, $l$, where $St$ denotes the standard two-dimensional representation
of any subgroup of ${\rm GL}_{2}$. Let $A/F$ be an abelian surface
of Galois type $\textbf{B}[C_{1}]$. There are two cases to consider;
either $A$ is isogenous to the product of elliptic curves over $F$
without CM that do not become isogenous over any finite extension
of $F$, or $A$ has real multiplication defined over $F$ by a real
quadratic field $K$. In the first case, the Sato-Tate conjecture
is known and due to Harris (\cite[Theorem 5.4]{Har}), modulo certain
hypotheses that have subsequently been verified (see \cite{BGHT}
for a discussion and references). It remains to deal with the second
case. The proof is the same as that of Harris in the first case (up
to a few minor details and the general remark that we use more powerful
potential automorphy theorems).
\begin{prop}
\label{prop: B[C_1]}Let $A$ be an abelian surface over $F$ such
that ${\rm End}_{F}(A)^{0}={\rm End}_{\overline{\mathbb{Q}}}(A)^{0}=K$
is a two-dimensional semisimple algebra over $\mathbb{Q}$. Then the
Sato-Tate conjecture holds for $A$.\end{prop}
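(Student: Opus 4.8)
The plan is to verify, for every nontrivial irreducible representation of $ST_{A}$, the analytic input required by Serre's theorem (\S \ref{sec: strategy}), following Harris's argument in the split case. As recalled above, $A$ is either isogenous over $F$ to a product $E_{1}\times E_{2}$ of elliptic curves without CM which do not become isogenous over $\overline{\mathbb{Q}}$ (the case $K\cong\mathbb{Q}\times\mathbb{Q}$), or simple with real multiplication over $F$ by a real quadratic field $K$. The first case is Theorem 5.4 of \cite{Har}, whose auxiliary hypotheses are now known to hold (see \cite{BGHT}), so I would reduce to the second case and run the same argument with $E_{1},E_{2}$ replaced by the two two-dimensional constituents of the Tate module of the $\mathrm{GL}_{2}$-type surface $A$. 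Concretely, Proposition \ref{prop: compatible} gives $\rho_{A,\ell}\cong\rho_{A,\lambda_{1}}\oplus\rho_{A,\lambda_{2}}$, where $\lambda_{1},\lambda_{2}$ are the two embeddings $K\hookrightarrow\overline{\mathbb{Q}}_{\ell}$ and $(\rho_{A,\lambda})_{\lambda}$ is a weakly compatible system that is regular with Hodge--Tate weights $0,-1$, totally odd, pure, and satisfies $\det\rho_{A,\lambda}=\chi_{\ell}$; since $ST_{A}=\mathrm{SU}(2)\times\mathrm{SU}(2)$, the Zariski closure of the image of each $\rho_{A,\lambda_{i}}$ contains $\mathrm{SL}_{2}$, so $\rho_{A,\lambda_{i}}$ and all of its symmetric powers are irreducible and not potentially abelian. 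The nontrivial irreducible representations of $ST_{A}$ are the $r_{k,l}=\mathrm{Sym}^{k}(St)\otimes\mathrm{Sym}^{l}(St)$ with $(k,l)\neq(0,0)$; identifying the two $\mathrm{SU}(2)$-factors with $\rho_{A,\lambda_{1}},\rho_{A,\lambda_{2}}$, the recipe of \S \ref{sec: generalities} identifies $L^{S}(s,r_{k,l},A)$, up to a translation of the variable, with $L^{S}(s,(\mathrm{Sym}^{k}\rho_{A,\lambda_{1}})\otimes(\mathrm{Sym}^{l}\rho_{A,\lambda_{2}}))$. Since $L^{S}(s,r_{k,l},A)$ converges absolutely on $\{\mathrm{Re}\,s>1\}$ (the classes $s(v)$ being unitary), Serre's theorem reduces the proposition to showing that the latter $L$-function extends meromorphically to $\{\mathrm{Re}\,s\geq1\}$ and is holomorphic and non-vanishing there.

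For the analytic continuation I would invoke the potential automorphy theorems of \cite{BLGGT}. Each of $\mathrm{Sym}^{k}\rho_{A,\lambda_{1}}$ and $\mathrm{Sym}^{l}\rho_{A,\lambda_{2}}$ sits in a weakly compatible system which is regular, essentially self-dual, totally odd and pure --- exactly as for symmetric powers of non-CM elliptic curves in \cite{HSBT} --- so there is a finite Galois extension $F'/F$ over which both become automorphic, corresponding to cuspidal automorphic representations $\Pi_{k}$ of $\mathrm{GL}_{k+1}(\mathbb{A}_{F'})$ and $\Pi_{l}$ of $\mathrm{GL}_{l+1}(\mathbb{A}_{F'})$. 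Decomposing the trivial character of $\mathrm{Gal}(F'/F)$ by Brauer's theorem as $\sum_{i}a_{i}\mathrm{Ind}_{H_{i}}^{\mathrm{Gal}(F'/F)}\psi_{i}$ with the $H_{i}$ elementary (hence solvable) and the $\psi_{i}$ characters, putting $F_{i}=(F')^{H_{i}}$, and applying solvable base change descent of Arthur--Clozel (valid because $\mathrm{Sym}^{k}\rho_{A,\lambda_{1}}$ and $\mathrm{Sym}^{l}\rho_{A,\lambda_{2}}$ are already defined over $F_{i}$ and $F'/F_{i}$ is solvable), one obtains
\[
L^{S}\!\left(s,(\mathrm{Sym}^{k}\rho_{A,\lambda_{1}})\otimes(\mathrm{Sym}^{l}\rho_{A,\lambda_{2}})\right)=\prod_{i}L^{S}\!\left(s,\pi_{k}^{(i)}\times(\pi_{l}^{(i)}\otimes\psi_{i})\right)^{a_{i}},
\]
a product, with integer exponents, of Rankin--Selberg $L$-functions of cuspidal automorphic representations $\pi_{k}^{(i)}$ of $\mathrm{GL}_{k+1}(\mathbb{A}_{F_{i}})$ and $\pi_{l}^{(i)}$ of $\mathrm{GL}_{l+1}(\mathbb{A}_{F_{i}})$. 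By the theory of Rankin--Selberg integrals (Jacquet--Piatetski-Shapiro--Shalika, Shahidi) together with the boundary non-vanishing theorem of Jacquet--Shalika, each factor continues meromorphically to $\mathbb{C}$, is non-vanishing on $\{\mathrm{Re}\,s\geq1\}$, and is holomorphic there except possibly for a simple pole at $s=1$ occurring only when $\pi_{k}^{(i)}\cong(\pi_{l}^{(i)}\otimes\psi_{i})^{\vee}\otimes|\det|^{it}$ for some $t\in\mathbb{R}$. This already supplies the meromorphic continuation and the invertibility away from $s=1$.

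It remains to exclude such a pole when $(k,l)\neq(0,0)$; since no factor contributes a zero on the line, it suffices to show that no factor has a pole at $s=1$, i.e.\ that $\pi_{k}^{(i)}\not\cong(\pi_{l}^{(i)}\otimes\psi_{i})^{\vee}\otimes|\det|^{it}$. Such an isomorphism forces equality of dimensions, so $k=l$, and as $k=l=0$ is excluded we may assume $k=l\geq1$; translating to Galois representations by means of $(\mathrm{Sym}^{k}\rho)^{\vee}\cong\mathrm{Sym}^{k}\rho\otimes(\det\rho)^{-k}$ and $\det\rho_{A,\lambda_{i}}=\chi_{\ell}$, it becomes $\mathrm{Sym}^{k}\rho_{A,\lambda_{1}}|_{G_{F_{i}}}\cong\mathrm{Sym}^{k}\rho_{A,\lambda_{2}}|_{G_{F_{i}}}\otimes\nu$ for some character $\nu$. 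Because each $\rho_{A,\lambda_{j}}$ has Zariski-dense image in $\mathrm{GL}_{2}$ and $\mathrm{Sym}^{k}$ is injective on $\mathrm{PGL}_{2}$ for $k\geq1$, comparing projectivizations gives $\rho_{A,\lambda_{1}}|_{G_{F_{i}}}\cong\rho_{A,\lambda_{2}}|_{G_{F_{i}}}\otimes\mu$ for a character $\mu$ with $\mu^{2}=1$ (compare determinants). Over the at most quadratic extension $F''/F_{i}$ trivializing $\mu$, the two absolutely irreducible, non-CM constituents of $V_{\ell}(A)$ become isomorphic, so $\mathrm{End}_{G_{F''}}(V_{\ell}(A))\otimes_{\mathbb{Q}_{\ell}}\overline{\mathbb{Q}}_{\ell}\cong M_{2}(\overline{\mathbb{Q}}_{\ell})$, whence by Faltings' theorem $\dim_{\mathbb{Q}}\mathrm{End}_{F''}(A)^{0}=4$ --- contradicting $\mathrm{End}_{F''}(A)^{0}\subseteq\mathrm{End}_{\overline{\mathbb{Q}}}(A)^{0}=K$, which is two-dimensional. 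Hence no pole occurs, $L^{S}(s,r_{k,l},A)$ is holomorphic and non-vanishing at $s=1$ for every $r_{k,l}\neq1$, and Serre's theorem then yields the equidistribution.

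I expect this last endomorphism-theoretic step to be the genuinely delicate point: it is the one place where the full hypothesis $\mathrm{End}_{F}(A)^{0}=\mathrm{End}_{\overline{\mathbb{Q}}}(A)^{0}=K$ is used (in the split case subsumed by \cite{Har} its analogue is the hypothesis that $E_{1}$ and $E_{2}$ are not isogenous over $\overline{\mathbb{Q}}$), and it is what forbids a Rankin--Selberg factor from developing a pole and spoiling the non-vanishing at $s=1$. Everything else is a fairly routine assembly of the potential automorphy theorems of \cite{BLGGT}, Brauer induction, solvable base change and standard Rankin--Selberg theory, parallel to \cite{HSBT} and to Harris's treatment of the split case.
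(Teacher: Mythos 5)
Your proof is correct and takes essentially the same route as the paper: reduce the $K\cong\mathbb{Q}\times\mathbb{Q}$ case to \cite{Har}, then for real quadratic $K$ appeal to Theorem 5.4.1 of \cite{BLGGT} for potential cuspidal automorphy of $(\mathrm{Sym}^{m}\rho_{A,\lambda})_{\lambda}$ over a totally real Galois $F'/F$, combine Brauer induction with solvable base change and Rankin--Selberg theory to get invertibility of $L^{S}(s,r_{k,l},A)$ away from a possible pole at $s=1$, and rule out that pole using Faltings together with ${\rm End}_{\overline{\mathbb{Q}}}(A)^{0}=K$. The one place you diverge in presentation is the final non-isomorphism step: the paper simply cites the argument preceding Theorem 5.3 of \cite{Har} (open image plus endomorphism algebra) to conclude that $\mathrm{Sym}^{k}\rho_{A,\lambda_1}|_{G_E}\cong\mathrm{Sym}^{l}\rho_{A,\lambda_2}|_{G_E}$ would force $\rho_{A,\lambda_1}\cong\rho_{A,\lambda_2}$ over a finite extension, whereas you spell this out directly by comparing projectivizations (using injectivity of $\mathrm{Sym}^{k}$ on $\mathrm{PGL}_2$ and triviality of its centralizer/normalizer) and a determinant comparison giving $\mu^{2}=1$. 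Both arguments land on the same contradiction with $\dim_{\mathbb{Q}}\mathrm{End}_{\overline{\mathbb{Q}}}(A)^{0}=2$.

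Two minor points worth noting, though neither is a gap: you should make explicit (as the paper does) that a single $F'$ works simultaneously for $\mathrm{Sym}^{k}\rho_{A,\lambda_1}$ and $\mathrm{Sym}^{l}\rho_{A,\lambda_2}$, which Theorem 5.4.1 of \cite{BLGGT} does provide; and when you derive $\mathrm{Sym}^{k}\rho_{A,\lambda_1}|_{G_{F_i}}\cong\mathrm{Sym}^{k}\rho_{A,\lambda_2}|_{G_{F_i}}\otimes\nu$ from the Rankin--Selberg pole criterion, it is worth observing (as the paper's parenthetical ``use essential self-duality and the multiplier'' does) that taking determinants already forces $\nu$ to be of finite order, so the $|\det|^{it}$ twist in the pole criterion disappears for free.
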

\begin{proof}
As mentioned above, $K$ is either $\mathbb{Q}\times\mathbb{Q}$ or
a real quadratic field, the first case being treated by Harris, so
we may assume that $K$ is real quadratic (the proof below also works with the obvious modifications). Consider the regular, totally odd and essentially self-dual weakly compatible system $(\rho_{A,\lambda})_{\lambda}$ given by Proposition \ref{prop: compatible}. For any $m\geq 0$, the weakly compatible system $(Sym^{m}\rho_{A,\lambda})_{\lambda}$ inherits these properties and moreover is irreducible (since the $\rho_{A,\lambda}(G_{F})$ is open in its Zariski closure, which is ${\rm GL}_{2}(\overline{\mb{Q}}_{\ell})$) so we may apply \cite[Theorem 5.4.1]{BLGGT} to deduce that there is a Galois, totally real extension $F^{\prime}/F$ such that $(Sym^{m}\rho_{A,\lambda})_{\lambda}$ becomes cuspidal automorphic after restriction to $F^{\prime}$. 

Next, let us consider the partial $L$-function  
\[ L^{S}(s,r_{k,l},A)=\prod_{v\notin S}\frac{1}{{\rm det}(1-r_{k,l}(s(v))q_{v}^{-s})} \] 
which we want to show is invertible on $Re\, s\geq1$ when $(k,l)\neq(0,0)$. We may extend $r_{k,l}$ to a representation $R_{k,l}$ of $G_{\ell}^{Zar}$ as described in the previous section. Here let us be explicit and, since $G_{\ell}^{Zar}\subseteq {\rm GL}_{2}(\overline{\mathbb{Q}}_{\ell})\times{\rm GL}_{2}(\overline{\mathbb{Q}}_{\ell})$, define $R_{k,l}$ by restricting $Sym^{k}\otimes Sym^{l}$. Then we have
\[ L^{S}(s,R_{k,l},A)=L^{S}(s+(k+l)/2,R_{k,l}\circ r_{A,\ell})=L^{S}(s+(k+l)/2,Sym^{k}\rho_{A,\lambda_{1}}\otimes Sym^{l}\rho_{A,\lambda_{2}}) \] 
where we let $\lambda_{1}$ and $\lambda_{2}$ denote the two embeddings $K\hookrightarrow\overline{\mathbb{Q}}_{\ell}$. By Brauer's theorem, as in e.g. the proof of \cite[Theorem 4.2]{HSBT}, the invertibility of this $L$-function follows from that of \[ L^{S}(s+(k+l)/2,Sym^{k}(\rho_{A,\lambda_{1}}|_{G_{E}})\otimes Sym^{l}(\rho_{A,\lambda_{2}}|_{G_{E}})) \] for arbitrary subextensions $F^{\prime}/E/F$ such that $F^{\prime}/E$ is solvable, and cuspidal automorphy of $(Sym^{m}\rho_{A,\lambda})_{\lambda}$ over $F^{\prime}$ implies cuspidal automorphy over $E$ by the theory of cyclic base change (\cite{AC}). Thus, Rankin-Selberg theory (as in the proof of \cite[Theorem 5.3]{Har}) implies that $L^{S}(s+(k+l)/2,Sym^{k}(\rho_{A,\lambda_{1}}|_{G_{E}})\otimes Sym^{l}(\rho_{A,\lambda_{2}}|_{G_{E}}))$ is invertible, as long as $Sym^{k}(\rho_{A,\lambda_{1}}|_{G_{E}})\not\cong Sym^{l}(\rho_{A,\lambda_{2}}|_{G_{E}})$ (use essential self-duality and the fact that the multiplier of $\rho_{A,\lambda}$ is the cyclotomic character $\chi_{\ell}$). Then, arguing as in the paragraph before \cite[Theorem 5.3]{Har} (using our knowledge of ${\rm End}_{\overline{\mathbb{Q}}}(A)$ and the open image theorem for the $\rho_{A,\lambda}$), we deduce that if $Sym^{k}(\rho_{A,\lambda_{1}}|_{G_{E}})\cong Sym^{l}(\rho_{A,\lambda_{2}}|_{G_{E}})$ then there exists a finite extension $E^{\prime}/F$ such that $\rho_{A,\lambda_{1}}|_{G_{E^{\prime}}}\cong\rho_{A,\lambda_{2}}|_{G_{E^{\prime}}}$, but this contradicts the fact that ${\rm End}_{\overline{\mathbb{Q}}}(A)^{0}=K$ by Faltings's theorem.
\end{proof}

\subsection{Galois type $\textbf{B}[C_{2}]$}

Here $ST_{A}=N({\rm SU}(2)\times{\rm SU}(2))=\left\langle {\rm SU}(2)\times{\rm SU}(2),J\right\rangle \subseteq{\rm USp}(4)$.
Note that $ST_{A}$ has two components as $J^{2}=Id_{4}$. Let us
start by recording the following standard consequence of Clifford's
Theorem which will be used numerous times throughout this paper:
\begin{lem}
\label{lem: Bump}Let $G$ and $H$ be topological groups with $H\subseteq G$
a subgroup of index $2$ and let $x\in G\backslash H$ be any element.
If $r$ is a continuous finite-dimensional irreducible representation
of $H$ we define its twist $r^{x}$ by $r^{x}(h)=r(xhx^{-1})$. Then
$r$ extends to a representation of $G$ if and only if $r\cong r^{x}$,
in which case it extends to exactly two non-isomorphic representations
(one is obtained from the other by twisting by the nontrivial character
of $G/H$) and all the other irreducible representations of $G$ are
of the form $Ind_{H}^{G}r$ where $r$ is an irreducible representation
of $H$ such that $r\not\cong r^{x}$.
\end{lem}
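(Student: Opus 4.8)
The plan is to prove this as a direct consequence of Clifford theory for the pair $H \subseteq G$ of index $2$, together with the fact that for such a pair the cohomological obstruction to extending an invariant representation automatically vanishes. First I would fix $x \in G\backslash H$ and let $\sigma$ denote the nontrivial character of $G/H$, pulled back to $G$. The key observation is that for $r$ irreducible on $H$ we have $\mathrm{Res}^G_H \mathrm{Ind}^G_H r \cong r \oplus r^x$ (Mackey), so $\mathrm{Ind}^G_H r$ is irreducible precisely when $r \not\cong r^x$, and in that case $\mathrm{Ind}^G_H r \cong \mathrm{Ind}^G_H r^x$; conversely if $r \cong r^x$ then $\mathrm{Ind}^G_H r$ decomposes as a sum of two pieces. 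This gives the classification of irreducibles of $G$ in terms of those of $H$: every irreducible $\pi$ of $G$ restricts to either an irreducible $r$ of $H$ with $r \cong r^x$, or to $r \oplus r^x$ with $r \not\cong r^x$, the latter giving $\pi \cong \mathrm{Ind}^G_H r$.

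Next I would handle the extension statement. If $r$ extends to $\tilde r$ on $G$, then $r^x \cong \tilde r(x \cdot x^{-1})|_H \cong \tilde r|_H = r$ since $x$ normalizes $H$ and $\tilde r$ is a genuine representation of $G$ — so $r \cong r^x$ is necessary. For sufficiency, suppose $r \cong r^x$ via an intertwiner $T$, i.e. $r(xhx^{-1}) = T r(h) T^{-1}$ for all $h \in H$. Then $r(x^2) $ and $T^2$ both intertwine $r$ with itself (using $x^2 \in H$), so by Schur $T^2 = \lambda\, r(x^2)$ for some scalar $\lambda \in \mathbb{C}^\times$; rescaling $T$ by $\lambda^{-1/2}$ we may assume $T^2 = r(x^2)$. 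One then checks that setting $\tilde r(h) = r(h)$ for $h \in H$ and $\tilde r(xh) = T r(h)$ for $h \in H$ gives a well-defined homomorphism $G \to \mathrm{GL}(V)$: this is a short verification using the normalization $T^2 = r(x^2)$ and the intertwining relation. Continuity follows since $H$ is open of index $2$ in $G$, so $\tilde r$ is continuous iff its restriction to $H$ is. The two choices of square root $\pm T$ give the two extensions, which differ by $\sigma$; they are non-isomorphic because their traces on the nontrivial coset $xH$ differ by sign, and any two extensions of $r$ differ by a character of $G/H$ (their ratio is a one-dimensional twist since they agree on $H$ up to the scalar ambiguity fixed by Schur), of which there are exactly two.

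The main obstacle — really the only nontrivial point — is the scalar-normalization step showing that the cocycle obstruction vanishes, i.e. that one can always rescale $T$ so that $T^2 = r(x^2)$ and thereby build a genuine (not merely projective) extension. This works here only because $G/H \cong \mathbb{Z}/2$ has trivial Schur multiplier; the argument is the standard one but I would want to write the verification that $\tilde r$ respects all relations of $G$ carefully. I expect this to be a couple of lines of elementary group algebra once the normalization is in place. The remaining assertions — that the non-extending $r$ contribute via $\mathrm{Ind}^G_H r$ and exhaust the rest of $\widehat G$, and that $\mathrm{Ind}^G_H r \cong \mathrm{Ind}^G_H r^x$ — all drop out of the Mackey/Frobenius-reciprocity computation in the first paragraph, so I would present that computation first and then the extension analysis, noting at the end that the two cases (extends / induced) are mutually exclusive and cover everything by counting restrictions to $H$.
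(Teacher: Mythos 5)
Your proof is correct and follows exactly the route the paper gestures at: the paper states the lemma without proof as a ``standard consequence of Clifford's Theorem,'' and your argument (Mackey's formula for the restriction of an induced representation from an index-two subgroup, Schur's lemma to normalize the intertwiner so that $T^2 = r(x^2)$, and the resulting two extensions differing by the sign character) is precisely the standard Clifford-theoretic proof. The only implicit point worth flagging, which you do note, is that $H$ must be open in $G$ for the twist by the character of $G/H$ to be continuous; this holds automatically in the paper's application where $H = ST_A^0$ is the identity component.
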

We will use it here with $H=ST_{A}^{0}$, $G=ST_{A}$. Given $(A,B)\in ST_{A}^{0}$,
we have that $J(A,B)J^{-1}=(J_{2}BJ_{2}^{-1},J_{2}AJ_{2}^{-1})$.
Let $r_{k,l}=Sym^{k}(St)\otimes Sym^{l}(St)$ be an irreducible representation
of $ST_{A}^{0}$. Then since $J_{2}\in{\rm SU}(2)$ we see that $r_{k,l}^{J}\cong r_{l,k}$.
Hence $r_{k,l}$ extends if and only if $k=l$ and the Lemma gives
us all the irreducible representations of $ST_{A}$. For $k\neq l$,
we put $s_{k,l}=Ind_{H}^{G}r_{k,l}$, for $k=l$ we have two representations
$s_{k}^{1}$, $s_{k}^{2}$ that extend $r_{k,k}$. We make the convention
that $s_{0}^{1}$ is the trivial representation.

Now let us consider an abelian surface $A/F$ of type $\textbf{B}[C_{2}]$.
This means that $L/F$ has degree two, ${\rm End}_{F}(A)^{0}=\mathbb{Q}$
and ${\rm End}_{L}(A)^{0}={\rm End}_{\overline{\mathbb{Q}}}(A)^{0}$
is a two-dimensional semisimple algebra over $\mathbb{Q}$. Thus $\rho_{A,\ell}$
is irreducible but $\rho_{A,\ell}|_{G_{L}}$ is a sum of two two-dimensional
irreducible representations that do not become either isomorphic or
reducible upon further extension of $F$.
\begin{prop}
\label{prop: B[C_2]}Let $A/F$ be an abelian surface of Galois type
$\textbf{B}[C_{2}]$ and assume that $L$ is totally real. Then the
Sato-Tate conjecture holds for $A$.\end{prop}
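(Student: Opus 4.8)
The plan is to mimic the argument of Proposition \ref{prop: B[C_1]}, but now one must keep track of the fact that $\rho_{A,\ell}$ is irreducible over $F$ only and decomposes over $L$. Write $G_{L}\subseteq G_{F}$ for the index-two subgroup, pick $\sigma\in G_{F}\setminus G_{L}$, and let $\rho_{A,\lambda}$ ($\lambda$ over the embeddings $K\hookrightarrow\overline{\mathbb{Q}}_{\ell}$, $K$ the real quadratic field acting on $A_{L}$) be the two-dimensional pieces of $\rho_{A,\ell}|_{G_{L}}$ furnished by Proposition \ref{prop: compatible}; these form a regular, totally odd, essentially self-dual (multiplier $\chi_{\ell}$) weakly compatible system over the totally real field $L$, and each is irreducible with open image in ${\rm GL}_{2}(\overline{\mathbb{Q}}_{\ell})$ by Faltings, so the same holds for every $Sym^{m}\rho_{A,\lambda}$. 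Applying Theorem 5.4.1 of \cite{BLGGT} we obtain a Galois totally real $L'/L$ over which every $(Sym^{m}\rho_{A,\lambda})_{\lambda}$ becomes cuspidal automorphic; enlarging $L'$ we may assume $L'/F$ is Galois. Note the hypothesis that $L$ is totally real is exactly what makes this step available.

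Next I would run through the irreducible representations of $ST_{A}$ listed above: $s_{k,l}={\rm Ind}_{H}^{G}r_{k,l}$ for $k\neq l$, and $s_{k}^{1},s_{k}^{2}$ extending $r_{k,k}$ for $k=l$ (with $s_{0}^{1}$ trivial). For each such $R$, extend it to an algebraic representation $R(w)$ of $G_{\ell}^{Zar}$ by choosing a compatible central character as in \S\ref{sec: generalities}, so that $L^{S}(s,R,A)=L^{S}(s+w/2,R(w)\circ\rho_{A,\ell})$. The point is to identify $R(w)\circ\rho_{A,\ell}$, up to twist and after suitable base change, with an automorphic induction or Rankin-Selberg product of the $Sym^{m}\rho_{A,\lambda}$, whose automorphy over $L'$ we have in hand. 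Concretely: for $R=s_{k}^{i}$ the restriction of $R(w)\circ\rho_{A,\ell}$ to $G_{L}$ is (a twist of) $Sym^{k}\rho_{A,\lambda_{1}}\otimes Sym^{k}\rho_{A,\lambda_{2}}$, so $R(w)\circ\rho_{A,\ell}$ is, up to a finite-order character, the automorphic induction from $L$ of that Rankin-Selberg product; for $R=s_{k,l}$, $k\neq l$, one has $R(w)\circ\rho_{A,\ell}\cong{\rm Ind}_{G_{L}}^{G_{F}}(Sym^{k}\rho_{A,\lambda_{1}}\otimes Sym^{l}\rho_{A,\lambda_{2}})$ directly. Using Brauer's theorem (as in Theorem 4.2 of \cite{HSBT}), cyclic base change \cite{AC} to descend cuspidal automorphy from $L'$ to solvable subextensions, and automorphic induction for the quadratic extension $L/F$, the invertibility of $L^{S}(s,R,A)$ on $\mathrm{Re}\,s\geq1$ for $R\neq s_{0}^{1}$ reduces to the non-vanishing and holomorphy at the edge of Rankin-Selberg $L$-functions $L(s,Sym^{k}(\rho_{A,\lambda_{1}}|_{G_{E}})\times Sym^{l}(\rho_{A,\lambda_{2}}|_{G_{E}})^{\vee})$ for various solvable $E/F$ inside $L'$, together with the automorphy/cuspidality of the relevant isobaric sums.

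The main obstacle, as in \cite{Har} and Proposition \ref{prop: B[C_1]}, is the possible \emph{coincidence} $Sym^{k}(\rho_{A,\lambda_{1}}|_{G_{E}})\cong Sym^{l}(\rho_{A,\lambda_{2}}|_{G_{E}})\otim(\text{char})$, which is what would make a Rankin-Selberg $L$-function have a pole at $s=1$ and must be ruled out to get invertibility, and — in the $k=l$ case — the need to control when ${\rm Ind}_{G_{L}}^{G_{F}}$ of a cuspidal representation is again cuspidal versus a sum of two pieces (the $s_{k}^{1}$ vs $s_{k}^{2}$ distinction), which dictates exactly which edge-vanishing statements are needed. I would resolve the coincidence issue exactly as Harris does: a symmetric-power isomorphism after base change forces, via the open-image theorem for $\rho_{A,\lambda}$ and comparison of determinants/multipliers, an isomorphism $\rho_{A,\lambda_{1}}|_{G_{E'}}\cong\rho_{A,\lambda_{2}}|_{G_{E'}}$ for some finite $E'/F$, which together with the hypothesis ${\rm End}_{\overline{\mathbb{Q}}}(A)^{0}$ being the two-dimensional algebra attached to $K$ (so $A_{\overline{\mathbb{Q}}}$ is \emph{not} isogenous to a square of an elliptic curve nor does it acquire extra endomorphisms) contradicts Faltings's theorem. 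The bookkeeping of twists by the quadratic character of $G_{F}/G_{L}$ and by the finite-order character coming from the central $\mathbb{G}_{m}$ is the only genuinely fiddly part, but it is routine; the conceptual content is entirely in Theorem 5.4.1 of \cite{BLGGT}, cyclic base change, and Rankin-Selberg theory, exactly as advertised.
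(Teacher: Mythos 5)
Your overall strategy is the same as the paper's: decompose $\rho_{A,\ell}|_{G_L}$ into the two pieces $\rho_{A,\lambda_i}$, obtain potential automorphy of the $Sym^{m}\rho_{A,\lambda}$ over a totally real Galois extension containing $L$, classify the irreducibles of $ST_A$ into the inductions $s_{k,l}$ ($k\neq l$) and the extensions $s_k^i$ of $r_{k,k}$, and reduce invertibility of the resulting $L$-functions to the $\textbf{B}[C_1]$ argument via Brauer's theorem, cyclic base change, Rankin--Selberg theory, and the Faltings/open-image argument ruling out the symmetric-power coincidence. Two points in your sketch need correction, however.

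First, for $R=s_{k}^{i}$ the assertion that $R(w)\circ\rho_{A,\ell}$ ``is, up to a finite-order character, the automorphic induction from $L$ of that Rankin--Selberg product'' is wrong: an extension of $r_{k,k}$ from $ST_A^0$ to $ST_A$ has dimension $(k+1)^2$, whereas $\mathrm{Ind}_{G_L}^{G_F}\bigl(Sym^{k}\rho_{A,\lambda_1}\otimes Sym^{k}\rho_{A,\lambda_2}\bigr)$ has dimension $2(k+1)^2$. The induction equals $S_k^1\circ\rho_{A,\ell}\ \oplus\ S_k^2\circ\rho_{A,\ell}$, not either summand, and invertibility of the product $L$-function does not by itself give invertibility of each factor. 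This is precisely why the paper applies Brauer's theorem directly to each $S_k^i\circ\rho_{A,\ell}$ rather than passing through automorphic induction. Your later appeal to Brauer and to ``cuspidality of the relevant isobaric sums'' does indicate the correct route, but the automorphic-induction identification as written should be dropped (or replaced by ``a constituent of the isobaric automorphic induction'').

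Second, ``enlarging $L'$ we may assume $L'/F$ is Galois'' is not automatic: replacing $L'$ by its Galois closure over $F$ need not preserve the cuspidal automorphy obtained over $L'$, since the extra base change need not be solvable. The correct move, and the one the paper takes, is to use the flexibility built into Theorem 5.4.1 of \cite{BLGGT}: one may demand from the outset that the totally real extension over which the compatible systems become cuspidal automorphic is Galois over $F$, not merely over $L$. With these two repairs, your proposal coincides with the paper's proof.
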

\begin{proof}
We have an exact sequence 
\[
1\rightarrow G_{\ell}^{Zar}(A_{L})(\overline{\mathbb{Q}}_{\ell}) \rightarrow G_{\ell}^{Zar}(A)(\overline{\mathbb{Q}}_{\ell}) \rightarrow \mathbb{Z}/2\mathbb{Z} \rightarrow 1.
\]
Consider first the representation $s_{k,l}$ for $k\neq l$. Note that $s_{k,l}(-1)=r_{k,l}(-1)=(-1)^{k+l}$, this gives us an algebraic representation of $G_{\ell}^{Zar}(A)(\overline{\mathbb{Q}}_{\ell})$, which we will denote by $S_{k,l}$, satisfying $S_{k,l}=Ind_{G_{\ell}^{Zar}(A_{L})(\overline{\mathbb{Q}}_{\ell})}^{G_{\ell}^{Zar}(A)(\overline{\mathbb{Q}}_{\ell})}R_{k,l}$  where the $R_{k,l}$ on the right hand side is the representation as in Proposition \ref{prop: B[C_1]}. Thus
\[
L^{S}(s,s_{k,l},A)=L^{S}(s+(k+l)/2,S_{k,l}\circ \rho_{A,\ell})=L^{S}(s,r_{k,l},A_{L})
\]
and hence it is invertible by the proof of Proposition \ref{prop: B[C_1]}. 

Next we let $k\geq 1$ and consider $s_{k}^{i}$. We have $s_{k}^{i}(-1)=s_{k,k}(-1)=(-1)^{2k}$ so we may choose $w=2k$ and extend $R_{k}^{i}$ to an algebraic representation $S_{k}^{i}$ of $G_{\ell}^{Zar}(A)(\overline{\mathbb{Q}}_{\ell})$ with $L^{S}(s,s_{k}^{i},A)=L^{S}(s+k,S_{k}^{i}\circ \rho_{A,\ell})$. The invertibility of this $L$-function follows from that of
$L^{S}(s+k,S_{k}^{i}\circ\rho_{A,\ell}|_{G_{E}})=L^{S}(s+k,r_{k,k}\circ\rho_{A,\ell}|_{G_{E}})$ by Brauer's theorem,
where $E$ is a subextension of a totally real Galois extension $F^{\prime}/L$, and moreover $F^{\prime}/F$ is Galois as well. We now apply the proof of Proposition \ref{prop: B[C_1]} to conclude, noting that the extra condition that $F^{\prime}/F$ is Galois as well is allowed in \cite[Theorem 5.4.1]{BLGGT}.

Finally we need to consider $L^{S}(s,s_{0}^{2},A)$. Since the composition
$G_{F}\rightarrow\ G_{\ell}^{Zar}(A)(\overline{\mathbb{Q}}_{\ell}) \rightarrow\mathbb{Z}/2\mathbb{Z}$
is surjective with kernel $G_{L}$ $L^{S}(s,s_{0}^{2},A)$ is the
Hecke $L$-function associated with the nontrivial character of $Gal(L/F)$, hence invertible.
\end{proof}

\subsection{Type $\textbf{C}$ }

In this case only the Galois type $\textbf{C}[C_{2}]$ is possible
over a totally real field. We are still using the presentation ${\rm USp}(4)=\left\{ A\in{\rm U}(4)\mid A^{t}S^{\prime}A=S^{\prime}\right\} $.
We let ${\rm U}(1)\times{\rm SU}(2)$ be embedded into ${\rm USp}(4)$
by 
\[
(u,A)\mapsto\left(\begin{array}{ccc}
u\\
 & \overline{u}\\
 &  & A
\end{array}\right).
\]
$ST_{A}$ may then be described as $\left\langle {\rm U}(1)\times{\rm SU}(2),a\right\rangle $.
Thus $ST_{A}=N({\rm U}(1))\times{\rm SU}(2)$ where $N({\rm U}(1))$
denotes the normalizer of ${\rm U}(1)$ in ${\rm SU}(2)$, embedded
via 
\[
u\mapsto\left(\begin{array}{cc}
u\\
 & \overline{u}
\end{array}\right).
\]
The irreducible non-trivial representations of $N({\rm U}(1))$ are
of the form $r_{k}=Ind_{{\rm U}(1)}^{N({\rm U}(1))}(u\mapsto u^{k})$
for $k\in\mathbb{Z}$ nonzero (use e.g. Lemma \ref{lem: Bump}). If
we let $r_{0}^{0}$ denote the trivial representation of $N({\rm U}(1))$
and $r_{0}^{1}$ denotes the lift of the nontrivial character of $N({\rm U}(1))/{\rm U}(1)$,
we deduce that the irreducible representations of $ST_{A}$ are of
the form $r_{k}\otimes Sym^{l}St$ for $k\in\mathbb{Z}\backslash\left\{ 0\right\} $,
$l\in\mathbb{Z}_{\geq0}$ and $r_{0}^{\epsilon}\otimes Sym^{l}St$
for $\epsilon\in\{0,1\}$.

Consider an abelian variety $A/F$ of type $\textbf{C}[C_{2}]$. $A$
is then isogenous to a product $E_{1}\times E_{2}$ of elliptic curves
over $F$ (\cite[Proposition 4.5]{FKRS}) where we may take $E_{1}$
to have CM (defined over $L$ but not over $F$) and $E_{2}$ to have no CM. 
\begin{prop}
Let $A/F$ be an abelian surface of Galois type $\textbf{C}[C_{2}]$
and assume that $F$ is totally real (and hence $L$ is a CM field).
Then the Sato-Tate conjecture holds for $A$.\end{prop}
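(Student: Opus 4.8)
The plan is to run the strategy of \S\ref{sec: strategy}: show that $L^{S}(s,R,A)$ is invertible on $Re\,s\geq1$ for every nontrivial irreducible representation $R$ of $ST_{A}$. Recall (from the paragraph preceding the Proposition) that $A$ is isogenous over $F$ to $E_{1}\times E_{2}$ with $E_{1}$ having CM defined over $L$ but not over $F$ and $E_{2}$ without CM, so $\rho_{A,\ell}\sim\rho_{E_{1},\ell}\oplus\rho_{E_{2},\ell}$ and $G_{\ell}^{Zar}(A)\subseteq{\rm GL}_{2}(\overline{\mathbb{Q}}_{\ell})\times{\rm GL}_{2}(\overline{\mathbb{Q}}_{\ell})$; under the identifications above the factor $N({\rm U}(1))$ of $ST_{A}$ is the Sato--Tate group of $E_{1}$, the factor ${\rm SU}(2)$ that of $E_{2}$, and the index-two subgroup $ST_{A}^{0}={\rm U}(1)\times{\rm SU}(2)$ corresponds to $G_{L}\subseteq G_{F}$. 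Let $\sigma$ be the nontrivial element of ${\rm Gal}(L/F)$, let $\eta=\eta_{L/F}$ be the quadratic character of $G_{F}$ cutting out $L$, and let $\psi\colon G_{L}\to\overline{\mathbb{Q}}_{\ell}^{\times}$ be the algebraic Hecke character with $\rho_{E_{1},\ell}|_{G_{L}}=\psi\oplus\psi^{\sigma}$; since $E_{1}$ has no CM over $F$ the infinity type of $\psi$ is non-scalar, so $\psi^{k}$ (for $k\neq0$) is not of the form $|\cdot|^{c}$. First I would record the representation-theoretic input: using $r_{k}={\rm Ind}_{{\rm U}(1)}^{N({\rm U}(1))}(u\mapsto u^{k})$ together with the transitivity and projection formulas for induction, the algebraic representation $R(w)$ of $G_{\ell}^{Zar}$ attached (as in \S\ref{sec: generalities}) to $r_{k}\otimes{\rm Sym}^{l}St$ satisfies $R(w)\circ\rho_{A,\ell}\cong\big({\rm Ind}_{G_{L}}^{G_{F}}\psi^{k}\big)\otimes{\rm Sym}^{l}\rho_{E_{2},\ell}$ up to a cyclotomic twist (for $k\neq0$), while the one attached to $r_{0}^{\epsilon}\otimes{\rm Sym}^{l}St$ satisfies $R(w)\circ\rho_{A,\ell}\cong\eta^{\epsilon}\otimes{\rm Sym}^{l}\rho_{E_{2},\ell}$ up to a cyclotomic twist, for $\epsilon\in\{0,1\}$.

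Granting this, the rank-one cases ($l=0$) are pure class field theory: up to a real shift $L^{S}(s,r_{k},A)$ is the Hecke $L$-function over $L$ of $\psi^{k}$, which is invertible on $Re\,s\geq1$ because $\psi^{k}$ is not of the form $|\cdot|^{i\lambda}$, and $L^{S}(s,r_{0}^{1},A)=L(s,\eta)$ is invertible since $\eta$ is a nontrivial finite order character. For $l\geq1$, writing $L_{L}^{S}(s,-)$ for the partial $L$-function over $L$ of a Galois representation of $G_{L}$, the projection formula ${\rm Ind}_{G_{L}}^{G_{F}}(V)\otimes W\cong{\rm Ind}_{G_{L}}^{G_{F}}(V\otimes W|_{G_{L}})$ and invariance of $L$-functions under induction give, up to a shift, $L^{S}(s,r_{k}\otimes{\rm Sym}^{l}St,A)=L_{L}^{S}(s,\psi^{k}\otimes{\rm Sym}^{l}(\rho_{E_{2},\ell}|_{G_{L}}))$ for $k\neq0$. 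For $r_{0}^{\epsilon}\otimes{\rm Sym}^{l}St$, the case $\epsilon=0$ reduces immediately to the Sato--Tate conjecture for the non-CM elliptic curve $E_{2}$ over the totally real field $F$ (known, \cite{BGHT}, \cite{BLGGT}), i.e.\ to invertibility of $L_{F}^{S}(s,{\rm Sym}^{l}\rho_{E_{2},\ell})$; and Mackey's identity ${\rm Ind}_{G_{L}}^{G_{F}}{\rm Res}_{G_{L}}^{G_{F}}V\cong V\oplus(\eta\otimes V)$ with $V={\rm Sym}^{l}\rho_{E_{2},\ell}$ gives $L_{L}^{S}(s,{\rm Sym}^{l}(\rho_{E_{2},\ell}|_{G_{L}}))=L_{F}^{S}(s,{\rm Sym}^{l}\rho_{E_{2},\ell})\cdot L_{F}^{S}(s,\eta\otimes{\rm Sym}^{l}\rho_{E_{2},\ell})$, so, the first factor being invertible, the case $\epsilon=1$ also reduces to invertibility of $L_{L}^{S}(s,{\rm Sym}^{l}(\rho_{E_{2},\ell}|_{G_{L}}))$.

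It thus remains, for $l\geq1$, to show that $L_{L}^{S}(s,\psi'\otimes{\rm Sym}^{l}(\rho_{E_{2},\ell}|_{G_{L}}))$ is invertible on $Re\,s\geq1$, where $\psi'$ is a unitary algebraic Hecke character of $L$ (a normalization of $\psi^{k}$, or trivial in the $r_{0}^{\epsilon}$ cases). The hard part will be the potential automorphy input here. One checks that $\rho_{E_{2},\ell}|_{G_{L}}$ is regular, irreducible (as $E_{2}$ has no CM over $\overline{\mathbb{Q}}\supseteq L$ its image is open, so every symmetric power stays irreducible), and essentially conjugate self-dual over the CM field $L$: since $\rho_{E_{2},\ell}$ is defined over the totally real field $F=L^{+}$ one has $(\rho_{E_{2},\ell}|_{G_{L}})^{c}\cong\rho_{E_{2},\ell}|_{G_{L}}$, and combined with the Weil-pairing self-duality $\rho_{E_{2},\ell}^{\vee}\cong\rho_{E_{2},\ell}\otimes\chi_{\ell}^{-1}$ this yields a polarization with the geometric parity, inherited by every ${\rm Sym}^{l}$. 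Then Theorem 5.4.1 of \cite{BLGGT}, applied over the CM field $L$, provides a CM Galois extension $L'/L$ over which the weakly compatible system ${\rm Sym}^{l}(\rho_{E_{2},\lambda}|_{G_{L}})$ becomes cuspidal automorphic; twisting by $\psi'|_{G_{L'}}$ preserves cuspidality, and Brauer's theorem applied to ${\rm Gal}(L'/L)$ together with solvable base change (\cite{AC}) expresses $L_{L}^{S}(s,\psi'\otimes{\rm Sym}^{l}(\rho_{E_{2},\ell}|_{G_{L}}))$ as a finite product of $L$-functions of cuspidal automorphic representations of ${\rm GL}_{l+1}(\mathbb{A}_{L_{i}})$ over intermediate fields $L\subseteq L_{i}\subseteq L'$; as $l+1\geq2$ each of these is invertible on $Re\,s\geq1$ by Jacquet--Shalika, finishing the argument.

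The step I expect to require the most care is the first one — matching the representation theory of $ST_{A}=N({\rm U}(1))\times{\rm SU}(2)$, in particular the induced representations $r_{k}\otimes{\rm Sym}^{l}St$, with the Galois-theoretic structure $\rho_{A,\ell}\sim{\rm Ind}_{G_{L}}^{G_{F}}\psi\oplus\rho_{E_{2},\ell}$, while keeping track of the central twists so that the shift $s\mapsto s+w/2$ places the possible pole at $s=1$. The only genuinely new analytic ingredient beyond the cases already treated is the use of potential automorphy over the CM field $L$ (rather than over a totally real field) for the symmetric powers of $\rho_{E_{2},\ell}$; this is precisely why $F$ is assumed totally real, so that $L$ is CM and ${\rm Sym}^{l}(\rho_{E_{2},\ell}|_{G_{L}})$ is essentially conjugate self-dual, and with that available the remaining ingredients — Brauer's theorem, cyclic base change and the standard analytic properties of ${\rm GL}_{n}$ $L$-functions — run exactly as in the proof of Proposition \ref{prop: B[C_1]}.
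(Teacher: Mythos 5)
Your proof is correct and takes essentially the same approach as the paper: both identify the $L$-functions of the irreducible representations of $ST_{A}$ with those of the regular, essentially (conjugate) self-dual compatible systems $\psi^{k}\otimes\mathrm{Sym}^{l}(\rho_{E_{2},\ell}|_{G_{L}})$ over the CM field $L$ and $\mathrm{Sym}^{l}\rho_{E_{2},\ell}$ over $F$, then conclude via Theorem 5.4.1 of \cite{BLGGT}, Brauer's theorem and solvable base change. The only (cosmetic) difference is that the paper handles $r_{0}^{1}\otimes\mathrm{Sym}^{l}St$ directly as the quadratic twist $\eta\otimes\mathrm{Sym}^{l}\rho_{E_{2},\ell}$ over $F$, whereas you route it through the Mackey factorization of $L_{L}^{S}(s,\mathrm{Sym}^{l}(\rho_{E_{2},\ell}|_{G_{L}}))$.
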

\begin{proof}
We have $\rho_{A,\ell}=\rho_{E_{1},\ell}\oplus\rho_{E_{2},\ell}$
and $\rho_{E_{1},\ell}=Ind_{G_{L}}^{G_{F}}\psi_{\ell}$ for some weakly
compatible system of weight one essentially conjugate-self-dual algebraic
Hecke characters $\psi_{\ell}$. From this one sees that for $k\geq1$,
$l\geq1$, 
\[
L^{S}(s,r_{k}\otimes Sym^{l}St,A)=L^{S}(s+(k+l)/2,Sym^{l}(\rho_{E_{2},\ell}|_{G_{L}})\otimes\psi_{\ell}^{k})
\]
and the $(Sym^{l}(\rho_{E_{2},\ell}|_{G_{L}})\otimes\psi_{\ell}^{k})_{\ell}$
form a weakly compatible system of irreducible essentially
conjugate-self-dual regular Galois representations which we may apply \cite[Theorem 5.4.1]{BLGGT} to to deduce that it is
potentially automorphic; invertibility of $L^{S}(s,r_{k}\otimes Sym^{l}St,A)$
follows as usual from Brauer's theorem. For $k\geq1$, $l=0$ we have
\[
L^{S}(s,r_{k}\otimes1,A)=L^{S}(s+l/2,\psi_{\ell}^{k})
\]
and the result follows. For $l\geq1$ , $L^{S}(s,r_{0}^{\epsilon}\otimes Sym^{l}St,A)=L^{S}(s,Sym^{l}r_{E_{2},\ell}\otimes\chi)$
where $\chi$ is a character of $Gal(L/F)$ (trivial if and only if
$\epsilon=0$); invertibility follows from potential automorphy of
$Sym^{l}r_{E_{2},\ell}$ and Brauer's theorem. Finally, $L^{S}(s,r_{0}^{1}\otimes1,A)$
is the $L$-function of the nontrivial character of $Gal(L/F)$, hence
invertible. 
\end{proof}

\subsection{Galois types $\textbf{E}[C_{n}]$, $n=1,3,4,6$, $\textbf{E}[C_{2},\mathbb{R}\times \mathbb{R}]$ and $\textbf{E}[C_{2},\mathbb{C}]$}

We now switch presentation of ${\rm USp}(4)$ and instead use ${\rm USp}(4)=\left\{ A\in{\rm U}(4)\mid A^{t}SA=S\right\} $.
For type $E$ one has $ST_{A}^{0}={\rm SU}(2)$. We embed ${\rm U}(2)$
(and hence ${\rm SU}(2)$) into ${\rm USp}(4)$ via 
\[
A\mapsto\left(\begin{array}{cc}
A\\
 & \overline{A}
\end{array}\right).
\]

The Galois types $\textbf{E}[C_{n}]$, $n=1,3,4,6$, $\textbf{E}[C_{2},\mathbb{R}\times \mathbb{R}]$ and $\textbf{E}[C_{2},\mathbb{C}]$
correspond to the Sato-Tate groups $E_{n}$, $n=1,3,4,6$, $J(E_{1})$ and $E_{2}$
respectively, where the groups $E_{n}$ are defined as $\left\langle {\rm SU}(2),e^{\pi i/n}\right\rangle $ and $J(E_{1})=\langle {\rm SU}(2),J \rangle$. Here $e^{\pi i/n}$ denotes the element $diag(e^{\pi i/n},e^{\pi i/n},e^{-\pi i/n},e^{-\pi i/n})\in{\rm USp}(4)$. As $J$ commutes with ${\rm SU}(2)$ $J(E_{1})$ is isomorphic to ${\rm SU}(2)\times C_{2}$.

For the groups $E_{n}$ the irreducible representations are of the form $r_{k,w}=Sym^{k}St\otimes\chi_{w}$,
where $k\in\mathbb{Z}_{\geq0}$, $w\in\mathbb{Z}/2n\mathbb{Z}$, $k\equiv w\,(2)$
and $\chi_{w}$ is the character $z\mapsto z^{w}$ of $\mu_{2n}$.
$E_{n}$ is then the group 
\[
{\rm U}_{n}(2)=\left\{ A\in{\rm U}(2)\mid\det\, A\in\mu_{n}\right\} 
\]
embedded in ${\rm USp}(4)$ via the above embedding of ${\rm U}(2)$. For $J(E_{1})$ the irreducible representations are of the form $r_{k,w}=Sym^{k}St \otimes \chi_{w}$, $k$ integer $\geq 0$ and $w\in {0,1}$, with $w=0$ corresponding to the trivial character of $C_{2}$.

Arithmetically these cases correspond to abelian surfaces $A$ such that
${\rm End}_{L}(A)^{0}$ is a quaternion algebra over $\mathbb{Q}$
and, when $F\neq L$, ${\rm End}_{F}(A)^{0}$ is a quadratic
field or $\mathbb{Q}\times \mathbb{Q}$. This follows from \cite[Proposition 4.7]{FKRS},
after we have made some remarks regarding the formulation of that
proposition, assuming (in their notation) $C=\mathbb{Q}$ which is
all we need. First, note that the assumption that $E$ becomes isogenous
over $L$ to a product of elliptic curves is not necessary and indeed
not used in the proof, all that is used is that ${\rm End}_{L}(A)^{0}$
is a quaternion algebra. Moreover, there is a possible source of confusion in the formulation in that one has to allow the ``quadratic extension''
mentioned in cases (i), $n=2$ and (ii) to be $\mathbb{Q}\times\mathbb{Q}$
(e.g. $L/F$ quadratic, $A=E_{1}\times E_{2}$ where $E_{1}$ and
$E_{2}$ do not have CM and are non-isomorphic quadratic twists of
each other with respect to $L/F$).

Thus we see that $\rho_{A,\ell}$ is a direct sum of two two-dimensional
representations $\rho_{A,\lambda_{1}}$ and $\rho_{A,\lambda_{2}}$
and that there is a finite order character $\epsilon$ such that $\rho_{A,\lambda_{1}}\cong\rho_{A,\lambda_{2}}\otimes\epsilon$.
Since ${\rm End}_{L}(A)^{0}$ is a quaternion algebra, $\rho_{A,\lambda_{1}}$
and $\rho_{A,\lambda_{2}}$ become isomorphic when restricted to $L$
and are irreducible after restriction to any finite extension of $F$.
Thus $\rho_{A,\lambda_{1}}=\rho_{A,\lambda_{2}}\otimes\epsilon$ for
some character $\epsilon$ of $Gal(L/F)$. Note that we saw this in
Proposition \ref{prop: compatible} when ${\rm End}_{F}(A)^{0}$ is an imaginary quadratic field, and that $\det\,\rho_{A,\lambda_{1}}=\epsilon\chi_{\ell}$.

\begin{prop}
\label{prop: E[C_n]}Let $A$ be an abelian surface of Galois type
$\textbf{E}[C_{n}]$ for $n=1,3,4,6$ or  $\textbf{E}[C_{2},\mathbb{C}]$
and assume that $F$ is totally real. Then $A$ satisfies the Sato-Tate
conjecture.\end{prop}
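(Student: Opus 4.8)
The plan is to run the strategy of \S\ref{sec: strategy}: it suffices to prove that $L^{S}(s,r_{k,w},A)$ is invertible on $\mathrm{Re}\,s\geq1$ for every nontrivial irreducible representation $r_{k,w}$ of $ST_{A}$. The case $k=0$ is immediate: then $r_{0,w}$ is a nontrivial character, which (like every character of $ST_{A}=E_{n}$ or $E_{2}$, as the identity component is perfect) factors through $ST_{A}/ST_{A}^{0}=\mathrm{Gal}(L/F)$, so $L^{S}(s,r_{0,w},A)$ is the Hecke $L$-function of a nontrivial finite order character of $\mathrm{Gal}(L/F)$ and hence invertible by class field theory, exactly as at the ends of the previous proofs.

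For $k\geq1$ I would first extend $r_{k,w}$ to an algebraic representation $R_{k,w}$ of $G_{\ell}^{Zar}$, as in \S\ref{sec: generalities}, using $k\equiv w\,(2)$ to pick a compatible central character, so that $L^{S}(s,r_{k,w},A)=L^{S}(s+w/2,R_{k,w}\circ\rho_{A,\ell})$. The next and most delicate step is to identify this Galois representation explicitly. Recall that $\rho_{A,\ell}=\rho_{A,\lambda_{1}}\oplus\rho_{A,\lambda_{2}}$ with $\rho_{A,\lambda_{2}}\cong\rho_{A,\lambda_{1}}\otimes\epsilon^{-1}$ for a finite order character $\epsilon$ of $\mathrm{Gal}(L/F)$ and $\det\rho_{A,\lambda_{1}}=\epsilon\chi_{\ell}$. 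Using the explicit block description of $G_{\ell}^{Zar}\subseteq\mathrm{GL}_{2}\times\mathrm{GL}_{2}$ one reads off, directly from the definition of the Euler products, an isomorphism $R_{k,w}\circ\rho_{A,\ell}\cong\mathrm{Sym}^{k}(\rho_{A,\lambda_{1}})\otimes\delta_{w}$ up to a power of $\chi_{\ell}$ (which I absorb into the shift), where $\delta_{w}$ is a finite order character of $\mathrm{Gal}(L/F)$ built out of $w$, $k$ and $\epsilon$; as $w$ runs through the admissible residues modulo $2n$ the characters $\delta_{w}$ exhaust $\widehat{\mathrm{Gal}(L/F)}$, so every $r_{k,w}$ is accounted for.

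It then remains to show that $L^{S}(s,\mathrm{Sym}^{k}\rho_{A,\lambda_{1}}\otimes\delta_{w})$ (suitably shifted) is invertible, and here I would argue as in Propositions \ref{prop: B[C_1]} and \ref{prop: B[C_2]}. The weakly compatible system $(\mathrm{Sym}^{k}\rho_{A,\lambda}\otimes\delta_{w})_{\lambda}$ is $(k+1)$-dimensional, regular with Hodge--Tate weights $0,-1,\dots,-k$, totally odd and essentially self-dual, these properties being inherited from Proposition \ref{prop: compatible}; crucially it is irreducible and remains so after restriction to any finite extension of $F$, because ${\rm End}_{\overline{\mathbb{Q}}}(A)^{0}$ is a quaternion algebra, so that $A$ acquires no complex multiplication over any extension and $\rho_{A,\lambda}|_{G_{F'}}$ has Zariski closure $\mathrm{GL}_{2}$ for every finite $F'/F$ (by Faltings's theorem and the classification of endomorphism algebras of abelian surfaces). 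Theorem 5.4.1 of \cite{BLGGT} then produces a totally real extension $F'/F$, which we may take Galois over $F$, over which this system becomes cuspidal automorphic; Brauer's theorem (as in Theorem 4.2 of \cite{HSBT}) reduces invertibility of its $L$-function to invertibility over solvable subextensions $F'/E/F$, where cyclic base change \cite{AC} preserves cuspidality, and the standard analytic properties (Jacquet--Shalika) of the twisted standard $L$-function of a cuspidal automorphic representation of $\mathrm{GL}_{k+1}$ finish the argument, the relevant representation never having the form $|\cdot|^{it}$ since $k+1\geq2$.

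The main obstacle I anticipate is not analytic --- the essential input is the potential automorphy theorem of \cite{BLGGT}, which we may invoke --- but the representation-theoretic bookkeeping in the middle step: pinning down precisely, for each admissible pair $(k,w)$, which twist $\delta_{w}$ of $\mathrm{Sym}^{k}\rho_{A,\lambda_{1}}$ occurs, doing this consistently across the Galois types $\textbf{E}[C_{n}]$ for $n=1,3,4,6$ and $\textbf{E}[C_{2},\mathbb{C}]$ while keeping track of the various normalizations (arithmetic Frobenii, the chosen central characters, and the finite order characters $\epsilon$ and $\delta_{w}$), and verifying that no nontrivial irreducible representation of $ST_{A}$ is overlooked.
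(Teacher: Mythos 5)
Your proposal is correct and follows essentially the same route as the paper. The paper arrives at the $(k+1)$-dimensional compatible system a little more abstractly --- it observes that restriction of $\rho_{A,\ell}=\rho_{A,\lambda}\oplus\rho_{A,\bar\lambda}$ to $G_F^{1}$ identifies $G_\ell^{1,Zar}(A)$ with the Zariski closure of $\rho_{A,\lambda}(G_F^{1})$, so that the extension $R_{k,w}$ of $r_{k,w}$ may simply be composed with $\rho_{A,\lambda}$ --- whereas you make the resulting identification $R_{k,w}\circ\rho_{A,\ell}\cong\mathrm{Sym}^k\rho_{A,\lambda_1}\otimes\delta_w$ (up to cyclotomic twist) explicit; but both then invoke irreducibility, regularity, odd essential self-duality, Theorem~5.4.1 of \cite{BLGGT}, Brauer's theorem with cyclic base change, and Jacquet--Shalika in exactly the same way, and both treat $k=0$ by class field theory for the nontrivial characters of $Gal(L/F)$.

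Two small points worth being aware of. First, the "representation-theoretic bookkeeping" you flag as the main anticipated obstacle is indeed left at the same level of detail in the paper, so you are not missing anything the paper supplies. Second, for $\textbf{E}[C_{1}]$ one has $L=F$ and ${\rm End}_F(A)^0$ is a quaternion algebra rather than a quadratic field; one must choose an imaginary quadratic subfield $K$ inside it (the indefiniteness of the quaternion algebra guarantees one exists) before applying the ${\rm GL}_2$-type formalism of Proposition~\ref{prop: compatible} --- your invocation of the quaternionic endomorphism algebra for irreducibility suggests you have this in mind, but it is worth making the choice of $K$ explicit in that case.
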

\begin{proof}

Pick a finite place $\lambda$ of $K={\rm End}_{F}(A)^{0}$ (which is always imaginary quadratic by assumption, cf. \cite[Proposition 4.7]{FKRS}) and consider the two-dimensional representation $\rho_{A,\lambda}$ which is a direct summand of $\rho_{A,\ell}$. We claim that $G^{1,Zar}_{\ell}(A)$ is isomorphic to the Zariski closure $G_{\lambda}^{1,Zar}$ of $\rho_{A,\lambda}(G_{F}^{1})$. This follows since $\rho_{A,\ell}=\rho_{A,\lambda}\oplus \rho_{A,\bar{\lambda}}$ and after restriction to $G_{F}^{1}$, $\rho_{A,\bar{\lambda}}\cong \rho_{A,\lambda}\otimes det(\rho_{A,\lambda})^{-1}$.

Consider the representation $r_{k,w}$ of $ST_{A}$. $-1\in ST_{A}$ acts by $(-1)^{k}$ and hence we may define an irreducible algebraic representation $R_{k,w}$ of $G_{\ell}^{Zar}$ such that 
\[
L^{S}(s,r_{k,w},A)=L^{S}(s+k/2,R_{k,w}\circ \rho_{A,\lambda}).
\]
The weakly compatible system $(R_{k,w}\circ \rho_{A,\lambda})_{\lambda}$ is irreducible, totally odd, essentially self-dual and also regular (the latter is easily seen by restricting to $Gal(\overline{F}/L)$). Hence we deduce invertibility of $L^{S}(s+k/2,R_{k,w}\circ \rho_{A,\ell})$ in the case $k\geq 1$ in the usual way from \cite[Theorem 5.4.1]{BLGGT} and Brauer's Theorem. When $k=0$ and $w\neq 0$ invertibility follows from the fact that $R_{k,w}\circ \rho_{A,\ell}$ is a non-trivial character of $Gal(L/F)$.
\end{proof}

\begin{prop}
\label{prop: E[R]}Let $A$ be an abelian surface of Galois type
$\textbf{E}[C_{2},\mathbb{R}\times \mathbb{R}]$
and assume that $F$ is totally real. Then $A$ satisfies the Sato-Tate
conjecture.\end{prop}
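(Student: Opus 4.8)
The plan is to mirror the proof of Proposition \ref{prop: E[C_n]}, with the essential difference that now $\textbf{E}[C_2,\mathbb{R}\times\mathbb{R}]$ means ${\rm End}_F(A)^0$ is $\mathbb{Q}\times\mathbb{Q}$ (a split algebra) rather than an imaginary quadratic field. Concretely, $A$ is isogenous over $F$ to a product $E_1\times E_2$ of non-CM elliptic curves which become isogenous over the quadratic extension $L/F$, i.e. $\rho_{A,\ell}=\rho_{E_1,\ell}\oplus\rho_{E_2,\ell}$ with $\rho_{E_1,\ell}\cong\rho_{E_2,\ell}\otimes\epsilon$ for $\epsilon$ the nontrivial character of ${\rm Gal}(L/F)$, and $\rho_{E_1,\ell}|_{G_L}\cong\rho_{E_2,\ell}|_{G_L}$ is irreducible upon restriction to any finite extension of $F$. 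Since the determinants are both $\chi_\ell$ (no finite order twist is needed here, as $K$ is real), the Zariski closure $G^{1,Zar}_\ell(A)$ is again ${\rm SL}_2$, identified with the Zariski closure of $\rho_{E_1,\ell}(G_F^1)$ via $\rho_{E_2,\ell}|_{G_F^1}\cong\rho_{E_1,\ell}|_{G_F^1}$.

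Next I would run through the irreducible representations of $ST_A=E_2=\langle{\rm SU}(2),e^{\pi i/2}\rangle={\rm U}_2(2)$, which are the $r_{k,w}=Sym^k St\otimes\chi_w$ with $k\in\mathbb{Z}_{\geq0}$, $w\in\mathbb{Z}/4\mathbb{Z}$, $k\equiv w\pmod 2$. As in the previous proof, $-1\in ST_A$ acts by $(-1)^k$, so $r_{k,w}$ extends to an irreducible algebraic representation $R_{k,w}$ of $G_\ell^{Zar}$ with
\[
L^S(s,r_{k,w},A)=L^S(s+k/2,R_{k,w}\circ\rho_{A,\ell}),
\]
and for $k\geq1$ the weakly compatible system $(R_{k,w}\circ\rho_{A,\ell})_\ell$, realized concretely as $Sym^k\rho_{E_1,\ell}\otimes(\det\text{-twist})$ or a variant built from $\rho_{E_1,\ell}$ and $\epsilon$, is irreducible (because $Sym^k\rho_{E_1,\lambda}|_{G_E}$ is irreducible for every finite $E/F$, as $E_1$ has no potential CM), totally odd, essentially self-dual and regular (seen by restricting to $G_L$). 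So Theorem 5.4.1 of \cite{BLGGT} gives potential automorphy over a totally real Galois $F'/F$, and Brauer's theorem plus solvable base change \cite{AC} plus Rankin--Selberg theory yield invertibility of $L^S(s+k/2,R_{k,w}\circ\rho_{A,\ell})$ on $Re\,s\geq1$ exactly as in Propositions \ref{prop: B[C_1]} and \ref{prop: E[C_n]}. For the finitely many remaining representations with $k=0$, namely $w=2$ (and the two lifts of characters of $ST_A/ST_A^0$ or of $ST_A/({\rm SU}(2)\cdot\langle e^{\pi i}\rangle)$), the associated $R_{0,w}\circ\rho_{A,\ell}$ is a nontrivial finite-order character factoring through a quotient like ${\rm Gal}(L'/F)$ for some at most biquadratic $L'$, and invertibility is class field theory (Hecke $L$-functions of nontrivial characters).

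The one genuinely new point compared to the imaginary quadratic case is bookkeeping: since ${\rm End}_F(A)^0=\mathbb{Q}\times\mathbb{Q}$ is split, there is no single ``$\rho_{A,\lambda}$'' to work with, and I would instead take $\rho_{E_1,\ell}$ as the fundamental two-dimensional object and express each $R_{k,w}\circ\rho_{A,\ell}$ in terms of $Sym^k\rho_{E_1,\ell}$ and the quadratic character $\epsilon$ of ${\rm Gal}(L/F)$ — using $\rho_{E_2,\ell}\cong\rho_{E_1,\ell}\otimes\epsilon$ — and check the diagonal-vs-antidiagonal block structure of the ${\rm U}(2)\hookrightarrow{\rm USp}(4)$ embedding matches the $\chi_w$ twist correctly. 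I do not expect any serious obstacle: the potential automorphy and irreducibility inputs are identical to the $\textbf{E}[C_n]$ case, so the proof is a matter of recording the representation theory of $E_2$ and carrying the now-familiar machinery through; the parenthetical ``(the proof below also works with the obvious modifications)'' remark from Proposition \ref{prop: B[C_1]} applies in spirit here too.
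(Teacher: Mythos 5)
The central error is the identification of the Sato--Tate group. You write $ST_A = E_2 = \langle {\rm SU}(2), e^{\pi i/2}\rangle = {\rm U}_2(2)$, but the paper (in the paragraph beginning ``The Galois types $\textbf{E}[C_n]$\ldots'') explicitly assigns the Sato--Tate group $J(E_1) = \langle {\rm SU}(2), J\rangle \cong {\rm SU}(2)\times C_2$ to type $\textbf{E}[C_2,\mathbb{R}\times\mathbb{R}]$, and $E_2$ to $\textbf{E}[C_2,\mathbb{C}]$; you have swapped them. This is not merely a labeling issue: $E_2$ and $J(E_1)$ are non-isomorphic two-component groups ($e^{\pi i/2}$ has order $4$ while $J$ has order $2$), and their irreducible representations differ. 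You consequently list the wrong set of $r_{k,w}$ (taking $w\in\mathbb{Z}/4\mathbb{Z}$ with the parity constraint $k\equiv w\,(2)$), when for $J(E_1)\cong{\rm SU}(2)\times C_2$ the correct list is $r_{k,w}=Sym^k St\otimes\chi_w$ with $k\geq 0$ and $w\in\{0,1\}$, no constraint. Your own arithmetic description ($\rho_{A,\ell}=\rho_{E_1,\ell}\oplus\rho_{E_2,\ell}$ with $\rho_{E_2,\ell}\cong\rho_{E_1,\ell}\otimes\epsilon$ and $\epsilon$ the order-two character of ${\rm Gal}(L/F)$ valued in $\{\pm1\}$) already forces the extra generator of $ST_A$ to be a $J$-like element of order $2$ commuting with ${\rm SU}(2)$, not $e^{\pi i/2}$ --- had you traced through the $R_{k,w}\circ\rho_{A,\ell}$ computation with $\chi_1$ of order $4$, there is no order-$4$ character available and the construction would not close up.

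Beyond that slip, I should flag that the paper's own proof of this proposition does not mirror the $\textbf{E}[C_n]$ argument as you propose. Instead, it conjugates $J(E_1)$ inside ${\rm GL}_4(\mathbb{C})$ by an explicit matrix to put ${\rm SU}(2)$ in block-diagonal form $A\mapsto\mathrm{diag}(A,A)$ and send $J$ to $\mathrm{diag}(I,-I)$; from this it reads off the $\ell$-independence of the conjugacy classes $s(v)$ directly from the compatibility of the two two-dimensional pieces, and only then feeds $R_{k,w}\circ\rho_{A,\ell}$ into \cite[Thm.\ 5.4.1]{BLGGT}. The remark following the proposition explains that either this route or the $\textbf{E}[C_n]$-style route would work, so your overall strategy (once the group is corrected to $J(E_1)$) is admissible; but as written, the proposal is proving invertibility of the wrong family of $L$-functions.
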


\begin{proof}

Recall that $ST_{A}=J(E_{1})\cong {\rm SU}(2)\times C_{2}$ and the irreducible representations are of the form $r_{k,w}=Sym^{k}St \otimes \chi_{w}$, $k$ integer $\geq 0$ and $w\in {0,1}$, with $w=0$ corresponding to the trivial character of $C_{2}$. Moreover $\rho_{A,\ell}=\rho_{A,\lambda}\oplus \rho_{A,\bar{\lambda}}$ where $\lambda$ is a homomorphism ${\rm End}_{F}(A)^{0}\rightarrow \overline{\mathbb{Q}}_{\ell}$ and $\bar{\lambda}$ is the other one. We have $\rho_{A,\lambda}\cong \rho_{A,\bar{\lambda}}\otimes \epsilon$ where $\epsilon$ is the non-trivial character $Gal(L/F)\rightarrow \{\pm 1\}$. This presentation corresponds to conjugating $J(E_{1})$ inside ${\rm GL}_{4}(\mb{C})$ by e.g. the matrix 
\[
\left(\begin{array}{cc}
I & J_{2}\\
I & -J_{2}
\end{array}\right),
\]
this sends ${\rm SU}(2)$, embedded as 
\[
A\mapsto
\left(\begin{array}{cc}
A\\
& \bar{A}
\end{array}\right),
\]
to ${\rm SU}{2}$ embedded as
\[
A\mapsto
\left(\begin{array}{cc}
A\\
& A
\end{array}\right)
\]
and $J$ gets sent to $
\left(\begin{array}{cc}
I\\
& -I
\end{array}\right).
$ From this we see that the conjugacy classes $s_{v}$ are independent of $\ell$ as they are determined by the trace of $Frob_{v}$ in the weakly compatible system $\rho_{a,\lambda}$ in the case that ${\rm End}_{F}(A)^{0}$ is a field, and by the traces of $Frob_{v}$ in the two weakly compatible systems  $\rho_{a,\lambda_{1}}$ and  $\rho_{a,\lambda_{2}}$ in the case where ${\rm End}_{F}(A)^{0}\cong \mb{Q} \times \mb{Q}$, where $\lambda_{i}$ is the homomorphism $\mb{Q} \times \mb{Q} \rmap \overline{\mb{Q}}_{\ell}$ corresponding to projection to the $i$th factor. Now pick an extension $R_{k,w}$ of $r_{k,w}$ as usual, then the $R_{k,w}\circ \rho_{A,\ell}$ form an essentially self-dual, odd, regular and irreducible weakly compatible system (where compatibility follows from independence of the $s_{v}$) to which we may apply \cite[Theorem 5.4.1]{BLGGT} when $k\geq 1$ and deduce invertibility as usual, and for $k=0$, $w=1$ we proceed as usual as well. 
\end{proof}

\begin{rem} 
Proposition \ref{prop: E[R]} could also have been proven in similar fashion to Proposition \ref{prop: E[C_n]} (and vice versa). The proof given rests on the $\ell$-independence of the $s_{v}$ which is unknown in any significant generality, unlike the $\ell$-independence of $ST_{A}$ (which is of course a prerequisite).
\end{rem}

\subsection{Galois types $\textbf{E}[D_{n}]$,
$n=2,3,4,6$.}

Here $\textbf{E}[D_{n}]$ corresponds to $J(E_{n})$, $n=2,3,4,6$,
where the groups $J(E_{n})$ are defined as $\left\langle E_{n},J\right\rangle $.
Note that $J$ commutes with ${\rm SU}(2)$ and that $Je^{\pi i/n}J=e^{-\pi i/n}$.
Using the notation of the previous subsection we see that $r_{k,w}^{J}\cong r_{k,-w}$.
Thus $r_{k,w}$ extends if and only if $w=0$ or $n$; we get irreducible
representations $R_{k,0,\epsilon}$, $R_{k,n,\epsilon}$ ($k\in\mathbb{Z}_{\geq0}$,
$\epsilon\in\{0,1\}$, $k\equiv0$ resp. $n$ modulo $2$) and $R_{k,w}=Ind_{E_{n}}^{J(E_{n})}r_{k,w}$
($k\in\mathbb{Z}_{\geq0}$, $w\in(\mathbb{Z}/2n\mathbb{Z})\backslash\{0,n\}$,
$k\equiv w\,(2)$). In the case $n=2$ we may also describe the representations of $J(E_{2})$ using the subgroup $J(E_{1})$ instead of $E_{2}$; they divide into induced and extended as before. 

Let $A/F$ be an abelian surface of Galois type $\textbf{E}[D_{n}]$ for $n\in\{2,3,4,6\}$. By \cite[Proposition
4.7]{FKRS} there is a quadratic extension $F^{\prime}/F$
such that $A/F^{\prime}$ has type $\textbf{E}[C_{n}]$ if $n\in\{3,4,6\}$
and $\textbf{E}[C_{2},\mathbb{C}]$ or $\textbf{E}[C_{2},\mathbb{R}\times\mathbb{R}]$
if $n=2$ (in the latter case $F^{\prime}$ is not unique, but any
choice will do).
\begin{prop}
\label{prop: E[D_n]}Let $A/F$ be an abelian surface of Galois type
$\textbf{E}[D_{n}]$ for $n\in\{2,3,4,6\}$ and assume that $F^{\prime}$
is a totally real field. Then the Sato-Tate conjecture holds.\end{prop}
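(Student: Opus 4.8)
The plan is to deduce the proposition from Propositions \ref{prop: E[C_n]} and \ref{prop: E[R]} together with class field theory and Brauer's theorem. By Proposition 4.7 of \cite{FKRS} there is a quadratic extension $F'/F$ over which $A$ has type $\textbf{E}[C_n]$ (for $n\in\{3,4,6\}$), or type $\textbf{E}[C_2,\mathbb{C}]$ or $\textbf{E}[C_2,\mathbb{R}\times\mathbb{R}]$ (for $n=2$); by hypothesis $F'$, and therefore $F$, is totally real, so Propositions \ref{prop: E[C_n]} and \ref{prop: E[R]} give the Sato-Tate conjecture for $A_{F'}$. Write $H=ST_{A_{F'}}$, which is $E_n$ when $n\in\{3,4,6\}$ and is $E_2$ or $J(E_1)$ when $n=2$, according as $A_{F'}$ has type $\textbf{E}[C_2,\mathbb{C}]$ or $\textbf{E}[C_2,\mathbb{R}\times\mathbb{R}]$; in every case $H$ is an index-$2$ subgroup of $ST_A=J(E_n)$, with $J(E_n)/H$ identified with $Gal(F'/F)$ compatibly with the exact sequence $1\to G_{\ell}^{Zar}(A_{F'})(\overline{\mathbb{Q}}_{\ell})\to G_{\ell}^{Zar}(A)(\overline{\mathbb{Q}}_{\ell})\to\mathbb{Z}/2\mathbb{Z}\to 1$. (When $n=2$ and $A_{F'}$ has type $\textbf{E}[C_2,\mathbb{R}\times\mathbb{R}]$ we use the description of the irreducible representations of $J(E_2)$ through the subgroup $J(E_1)$ recalled above, rather than through $E_2$.) By Lemma \ref{lem: Bump}, each nontrivial irreducible representation $R$ of $J(E_n)$ is either $Ind_H^{J(E_n)}r$ for an irreducible representation $r$ of $H$ with $r\not\cong r^x$, or one of the two extensions to $J(E_n)$ of a self-conjugate irreducible $r$ (the two extensions differing by the nontrivial character $\delta$ of $Gal(F'/F)$); we must show that $L^S(s,R,A)$ is invertible in each case.

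For $R=Ind_H^{J(E_n)}r$: after the usual extension of $R$ to $G_{\ell}^{Zar}$ one has $R\circ\rho_{A,\ell}\cong Ind_{G_{F'}}^{G_{F}}(r\circ\rho_{A_{F'},\ell})$, and with the weight shifts cancelling this gives $L^S(s,R,A)=L^S(s,r,A_{F'})$ by inductivity of Artin $L$-functions; since $r$ is not self-conjugate it is in particular nontrivial, so this is invertible by Proposition \ref{prop: E[C_n]} (or Proposition \ref{prop: E[R]} in the relevant $n=2$ subcase). If $R$ is an extension of the trivial representation of $H$, then $R$ is trivial or the nontrivial character $\delta$ of $Gal(F'/F)$, with $L^S(s,\delta,A)$ the invertible Hecke $L$-function of $\delta$. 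If $R$ is one of the two extensions of $r_{0,n}$ (possible only for $n$ even), then $R\circ\rho_{A,\ell}$ is a nontrivial finite-order character of $G_F$, and $L^S(s,R,A)$ is again an invertible Hecke $L$-function.

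There remains the case that $R$ is an extension of $r_{k,0}$ or $r_{k,n}$ with $k\geq 1$. As usual, using $R(-1)=(-1)^k$ we extend $R$ to an algebraic representation of $G_{\ell}^{Zar}$, still denoted $R$, so that $L^S(s,R,A)=L^S(s+k/2,R\circ\rho_{A,\ell})$; restricted to $G_{F'}$ the representation $R\circ\rho_{A,\ell}$ is $Sym^k\rho_{A,\lambda}$ up to a twist by a finite-order character. By the proof of Proposition \ref{prop: E[C_n]} (resp.\ Proposition \ref{prop: E[R]}) it becomes cuspidal automorphic after restriction to a totally real extension $F''/F'$ that is Galois over $F'$, and which can be taken to be Galois over $F$ as well --- this is allowed by Theorem 5.4.1 of \cite{BLGGT}, as noted in the proof of Proposition \ref{prop: B[C_2]}. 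Moreover $R\circ\rho_{A,\ell}$ is irreducible over $F$, and remains so over every field between $F$ and $F''$, because $R|_{ST_A^0}=Sym^k St$ is irreducible and $ST_A^0$ --- equivalently the identity component of $G_{\ell}^{1,Zar}$ --- survives any finite base change. Brauer's theorem applied to $Gal(F''/F)$, cyclic base change (\cite{AC}) to descend cuspidal automorphy from $F''$ to the relevant solvable subextensions, and the purity of $R\circ\rho_{A,\ell}$ then express $L^S(s+k/2,R\circ\rho_{A,\ell})$ as a product of invertible automorphic $L$-functions over intermediate fields (which also supplies the meromorphic continuation to $\{Re\,s\geq 1\}$ needed to apply Serre's criterion), exactly as in the proofs of Propositions \ref{prop: B[C_1]} and \ref{prop: B[C_2]}. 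This settles all cases.

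The work here is essentially organizational; the point requiring most care is the enumeration of the irreducible representations of $J(E_n)$ for the four values of $n$ and the matching of each with one of the cases above --- in particular, for $n=2$, choosing the index-$2$ subgroup $H$ to be $E_2$ or $J(E_1)$ according to the type of $A_{F'}$, so that the appropriate one of Propositions \ref{prop: E[C_n]} and \ref{prop: E[R]} applies --- together with the bookkeeping of the weight twists in passing from $r$ on $ST_A$ to its lift to $G_{\ell}^{Zar}$. It is worth emphasising that no $\ell$-independence statement for the conjugacy classes $s(v)$ is needed beyond what already enters Propositions \ref{prop: E[C_n]} and \ref{prop: E[R]}, because Theorem 5.4.1 of \cite{BLGGT} is invoked only over $F'$ and the descent to $F$ proceeds entirely by Brauer's theorem and solvable base change.
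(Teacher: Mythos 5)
Your proof is correct and takes essentially the same route as the paper: decompose the irreducible representations of $J(E_n)$ via Lemma \ref{lem: Bump} into induced, extended, and one-dimensional cases, reducing the induced case to Propositions \ref{prop: E[C_n]}/\ref{prop: E[R]} and handling the extended case by the Brauer-theorem-plus-solvable-base-change argument of Proposition \ref{prop: B[C_2]}. The paper's version is a brief pointer to those earlier proofs; you have filled in the details (notably the correct choice of index-$2$ subgroup for $n=2$ and the irreducibility of $R\circ\rho_{A,\ell}$ over intermediate fields), all consistent with what the paper intends.
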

\begin{proof}
The proof follows the same lines as the proof of Proposition \ref{prop: B[C_2]}. If R is an irreducible representation of $ST_{A}$ of dimension $\geq 2$, then if $R$ is induced from $ST_{A_{F^{\prime}}}$ one reduces directly to the proof of Proposition \ref{prop: E[C_n]} for the invertibility of the corresponding $L$-function, and if $R$ is extended from $ST_{A_{F^{\prime}}}$ one argues as in the proof of Proposition \ref{prop: B[C_2]}, using the proof of Proposition \ref{prop: E[C_n]} or Proposition \ref{prop: E[R]} in place of that of Proposition \ref{prop: B[C_1]}. Finally if $\dim\,R=1$ we use surjectivity onto the component group as usual.
\end{proof}

\newpage

\section{Appendix}
\begin{center}
{\large by Francesc Fit\'e}
\footnote{Universit\"at Duisburg-Essen, Fakult\"at f\"ur Mathematik/ Institut f\"ur Experimentelle Mathematik, D-45127, Essen, Germany ; francesc.fite@gmail.com}
\end{center}

Note that Propositions \ref{prop: B[C_2]} and \ref{prop: E[D_n]} need the assumption that a certain quadratic extension $F^{\prime}$ of $F$ is such that $F^{\prime}$ is a totally real field.  

This is not always the case for the examples in \cite[Table 11]{FKRS}, and one may wonder about the existence of obstructions imposed by the arithmetic of certain Galois types to this hypothesis ever being satisfied.
 
The purpose of this section is to show that this is not the case. We will present explicit examples of abelian surfaces satisfying the hypothesis of Proposition \ref{prop: B[C_2]} and Proposition \ref{prop: E[D_n]} (in each of their cases), showing that their statements are indeed never vacuous.

Examples meeting the assumptions of Proposition \ref{prop: B[C_2]} can be constructed in the following way. Consider an elliptic curve $E/F^{\prime}$, where $F^{\prime}$ is a real quadratic field, whose $j$-invariant $j(E)$ lies in $F^{\prime} \setminus \mb{Q}$. If $\sigma$ denotes the nontrivial endomorphism of $F^{\prime}/\mb{Q}$, then the abelian surface given by descending $E\times E^{\sigma}$ to $\mb{Q}$ satisfies the hypothesis of Proposition \ref{prop: B[C_2]}, provided that $E$ and $E^{\sigma}$ are not $\overline{\mb{Q}}$-isogenous. An explicit example of this construction is given by the Jacobian of the following genus 2 curve  
\begin{equation}\label{equation: dr curve}
C\colon y^2 = x(x-1)(x^4 + 2x^3 - 6x + 1)\,.
\end{equation}
The curve $C$ has a non-hyperelliptic involution
$$
\alpha(x,y)=\left(\frac{x-1}{x+1}, \frac{-\sqrt{2}y}{(x+1)^3} \right)\,.
$$
Let $F'=\mb{Q}(\sqrt 2)$ and let $E$ denote the quotient curve $C/\langle \alpha \rangle$. The elliptic curve $E$ may be given by the affine equation
$$
E\colon y^2 = x^3 + \frac{19\sqrt 2 + 22}{2}x^2 - \frac{18\sqrt {2} +
23}{2}x + \frac{7\sqrt{2} + 10}{4}\,.
$$
It follows that ${\rm Jac}(C)$ is isogenous to $E\times E^{\sigma}$ over $F^{\prime}$. Since $E$ has $j$-invariant $j(E)=\frac{1}{5}(2142720\sqrt{2} + 3039232)$, it does not have complex multiplication. One may now apply \cite[Lemma 4.12]{FKRS} (choosing $p=7$, for example) to deduce that $E$ and $E^{\sigma}$ are not $\overline{\mb{Q}}$-isogenous. It follows that $L=F^{\prime}$ and that the Galois type of ${\rm Jac}(C)$ is \textbf{B}$[C_2]$. 

There are also examples of absolutely simple abelian surfaces of type \textbf{B}$[C_2]$ with $L$ totally real. As Benjamin Smith pointed out to us, one may find such examples inside the family
$$
C_t\colon y^2 = x^5 - 5x^3 + 5x + t\,,
$$
for some choices of $t\in \mb{Q}$. In \cite{TTV91} it is shown that ${\rm End}_{\overline{\mb{Q}}}({\rm Jac}(C_{t}))^{0}$ contains $\mb{Q}(\sqrt{5})$ with this real multiplication being only defined over $\mb{Q}(\sqrt{5})$. For particular choices of $t\in \mb{Q}$, one can argue that ${\rm Jac}(C_t)$ has no further endomorphisms. To do this, one can use the factorization of a few local factors to show that ${\rm Jac}(C_t)$ is absolutely simple, and that it does not have neither complex nor quaternionic multiplication. For example, the choice $t=1$ works, by looking at the local factors at $p=11$ and $p=19$.

Regarding Proposition \ref{prop: E[D_n]}, the case \textbf{E}$[D_2]$ is clear: since any biquadratic extension of $\mb{Q}$ contains at least one real quadratic extension, Proposition \ref{prop: E[D_n]} applies to any abelian surface over $\mb{Q}$ of type  \textbf{E}$[D_2]$ (for example, the one given in \cite[Table 11]{FKRS}).

For the case \textbf{E}$[D_4]$, we will use the parametrizations of genus 2 curves with automorphism group isomorphic to the dihedral group of $8$ elements $D_4$, given in \cite{CQ07}. The choice of parameters
$$
s=1,\qquad u=1/2,\qquad v=2,\qquad z=1\,,
$$
in \cite[Prop. 4.3]{CQ07} yields a curve isomorphic to
$$
C\colon y^2= 15x^6 - 48x^5 - 6x^4 - 12x^2 + 192x + 120\,.
$$
It is well-known that in this case one has ${\rm Jac}(C)\sim_{\tilde L} E^2$, where $E$ is an elliptic curve defined over the field $\tilde L$ of definition of the automorphisms of $C$. By \cite[\S2]{Car04}, the $j$-invariant of $E$ has two possibilities
$$
j(E)=\frac{2^ 6(3\pm 5\sqrt 2)^ 3}{(1\pm\sqrt 2)(1\mp \sqrt 2)^ 2}\,,
$$
from which one can deduce that $E$ does not have complex multiplication. This implies that $L=\tilde L$. We may now use \cite[Prop. 3.3]{CQ07} to deduce that
$$
L=\mb{Q}\left( \sqrt{2},\sqrt{1-1/\sqrt{3}} )\right)\,.
$$
The Galois type of ${\rm Jac}(C)$ is \textbf{E}$[D_4]$, provided that $Gal(L/\mb{Q})\simeq D_4$. To conclude, note that the three quadratic subextensions $\mb{Q}(\sqrt 3)$, $\mb{Q}(\sqrt 2)$, and $\mb{Q}(\sqrt 6)$ of $L/\mb{Q}$ are real.

Analogously, for the case \textbf{E}$[D_6]$, we will use the parametrizations of genus 2 curves with automorphism group isomorphic to the dihedral group of $12$ elements $D_6$, also given in \cite{CQ07}.
Choosing now parameters
$$
s=1,\qquad u=2,\qquad v=7/3,\qquad z=1\,,
$$
in \cite[Prop. 4.9]{CQ07}, we obtain a curve isomorphic to
\begin{equation}\label{eq: D6}
C\colon y^2 = x^5 + 12x^4 - 2124x^3 + 7992x^2 + 329184x - 38880\,.
\end{equation}
As in the previous example, one has ${\rm Jac}(C)\sim_{\tilde L} E^2$, where $E$ is an elliptic curve defined over the field $\tilde L$ of definition of the automorphisms of $C$. By \cite[\S2]{Car04}, the $j$-invariant of $E$ has two possibilities
$$
j(E)=\frac{2^8 3^3(2\pm 5\sqrt 2)^ 3(\mp\sqrt 2)}{(1\pm 2\sqrt 2)(1\mp 2\sqrt 2)^3}\,,
$$
from which one can deduce that $E$ does not have complex multiplication. Again this implies that 
$L=\tilde L$, which may be computed by means of \cite[Prop. 3.5]{CQ07}. Indeed, one finds that $\tilde L$ is the compositum of $\mb{Q}(\sqrt{2})$ and the splitting field of
$$
x^3-\frac{3}{2}x-\frac{1}{4}\,.
$$
Since $Gal(L/\mb{Q})\simeq D_6$, we have that the Galois type of ${\rm Jac}(C)$ is \textbf{E}$[D_6]$. The three quadratic subextensions $\mb{Q}(\sqrt{2})$, $\mb{Q}(\sqrt{42})$, and $\mb{Q}(\sqrt{21})$ of $L/\mb{Q}$ are all real. Taking $F^{\prime}=\mb{Q}(\sqrt {21})$, we see that ${\rm Jac}(C)/F^{\prime}$ has Galois type \textbf{E}$[C_6]$.

For the case \textbf{E}$[D_3]$, consider ${\rm Jac}(C)/F$, where $C$ is as in (\ref{eq: D6}) and $F=\mb{Q}(\sqrt{2})$. The Galois type of ${\rm Jac}(C)/F$ is \textbf{E}$[D_3]$ and the Galois type of ${\rm Jac}(C)/F^{\prime}$, with $F^{\prime}=\mb{Q}(\sqrt{2},\sqrt{21})$, is \textbf{E}$[C_3]$.

\textbf{Acknowledgements.} Thanks to Andrew Sutherland for assistance in several computations, as well as for kindly providing the curve of (\ref{equation: dr curve}), which was found using the techniques of \cite{KS08}.

\end{document}